\documentclass[12pt,reqno]{amsart}
\usepackage{amsmath,amssymb,amsthm,mathtools,calc,verbatim,enumitem,tikz,url,hyperref,mathrsfs,cite,fullpage} 

\newtheorem{theorem}{Theorem}[section]
\newtheorem{lemma}[theorem]{Lemma}
\newtheorem{corollary}[theorem]{Corollary}
\newtheorem{conj}[theorem]{Conjecture}

\newtheorem{claim}{Claim}
\theoremstyle{definition}

\newtheorem*{qu*}{Question}
\theoremstyle{remark}

\newcommand\N{\mathbb{N}}
\newcommand\R{\mathbb{R}}
\newcommand\Z{\mathbb{Z}}
\newcommand\E{\operatorname{\mathbb{E}}}
\newcommand\cA{\mathcal{A}}
\newcommand\cB{\mathcal{B}}

\newcommand\cS{\mathcal{S}}

\renewcommand\Pr{\operatorname{\mathbb{P}}}
\newcommand\id{\hbox{$1\mkern-6.5mu1$}}
\newcommand\lcm{\operatorname{lcm}}
\newcommand\eps{\varepsilon}

\renewcommand\le{\leqslant}
\renewcommand\ge{\geqslant}
\renewcommand\to{\rightarrow}
\newcommand\ds{\displaystyle}
\newcommand\bmid{\mathrel{\big|}}

\pagestyle{plain}

\begin{document}

\title{On the Erd\H{o}s Covering Problem: \\ The density of the uncovered set}
\author{Paul Balister \and B\'ela Bollob\'as \and Robert Morris \and \\
Julian Sahasrabudhe \and Marius Tiba}

\address{Department of Mathematical Sciences,
University of Memphis, Memphis, TN 38152, USA}\email{pbalistr@memphis.edu}

\address{Department of Pure Mathematics and Mathematical Statistics, Wilberforce Road,
Cambridge, CB3 0WA, UK, and Department of Mathematical Sciences,
University of Memphis, Memphis, TN 38152, USA}\email{b.bollobas@dpmms.cam.ac.uk}

\address{IMPA, Estrada Dona Castorina 110, Jardim Bot\^anico,
Rio de Janeiro, 22460-320, Brazil}\email{rob@impa.br}

\address{Peterhouse, Trumpington Street, University of Cambridge, CB2 1RD, UK \and IMPA, Estrada Dona Castorina 110, Jardim Bot\^anico,
Rio de Janeiro, 22460-320, Brazil }\email{julians@impa.br}

\address{Department of Pure Mathematics and Mathematical Statistics, Wilberforce Road,
Cambridge, CB3 0WA, UK}\email{mt576@dpmms.cam.ac.uk}

\thanks{The first two authors were partially supported by NSF grant DMS 1600742, the third author was partially supported by CNPq (Proc.~303275/2013-8) and FAPERJ (Proc.~201.598/2014), and the fifth author was supported by a Trinity Hall Research Studentship.}

\begin{abstract}
Since their introduction by Erd\H{o}s in 1950, covering systems (that is, finite collections of arithmetic progressions that cover the integers) have been extensively studied, and numerous questions and conjectures have been posed regarding the existence of covering systems with various properties. In particular, Erd\H{o}s asked if the moduli can be distinct and all arbitrarily large, Erd\H{o}s and Selfridge asked if the moduli can be distinct and all odd, and Schinzel conjectured that in any covering system there exists a pair of moduli, one of which divides the other.

Another beautiful conjecture, proposed by Erd\H{o}s and Graham in 1980, states that if the moduli are distinct elements of the interval $[n,Cn]$, and $n$ is sufficiently large, then the density of integers uncovered by the union is bounded below by a constant (depending only on~$C$). This conjecture was confirmed (in a strong form) by Filaseta, Ford, Konyagin, Pomerance and Yu in 2007, who moreover asked whether the same conclusion holds if the moduli are distinct and sufficiently large, and $\sum_{i=1}^k \frac{1}{d_i} < C$. Although, as we shall see, this condition is not sufficiently strong to imply the desired conclusion, as one of the main results of this paper we will give an essentially best possible condition which is sufficient. More precisely, we show that if all of the moduli are sufficiently large, then the union misses a set of density at least $e^{-4C}/2$, where
\[
 C = \sum_{i=1}^k \frac{\mu(d_i)}{d_i}
\]
and $\mu$ is a multiplicative function defined by $\mu(p^i)=1+(\log p)^{3+\eps}/p$
for some $\eps > 0$. We also show that no such lower bound (i.e., depending only on~$C$) on the density of the uncovered set holds when $\mu(p^i)$ is replaced by any function of the form $1+O(1/p)$.

Our method has a number of further applications. Most importantly, as our second main theorem, we prove the conjecture of Schinzel stated above, which was made in 1967. We moreover give an alternative (somewhat simpler) proof of a breakthrough result of Hough, who resolved Erd\H{o}s' minimum modulus problem, with an improved bound on the smallest difference. Finally, we make further progress on the problem of Erd\H{o}s and Selfridge. 
\end{abstract}

\maketitle

\section{Introduction}

A \emph{covering system\/} is a finite collection $A_1,\dots,A_k$ of arithmetic progressions
that cover the integers, i.e., that satisfy $\bigcup_{i=1}^k A_i = \Z$. The study of covering
systems with distinct differences (or \emph{moduli}) $d_1 < \dots < d_k$ was initiated in 1950
by Erd\H{o}s~\cite{E50}, who used them to answer a question of Romanoff, and posed a number of problems regarding their properties. For example, Erd\H{o}s~\cite{E50} asked whether there exist such systems with minimum modulus arbitrarily large, Erd\H{o}s and Selfridge (see, e.g.,~\cite{FFK}) asked if there exists a covering system with all moduli distinct and odd, and Schinzel~\cite{Sch} conjectured that in any covering system there exist a pair of moduli, one of which divides the other. In 1980, Erd\H{o}s and Graham~\cite{EG} initiated the study of the density of the uncovered set; in particular, they conjectured that if the (distinct) moduli $d_1,\dots,d_k$ all lie in the interval $[n,Cn]$, where $n \ge n_0(C)$ is sufficiently large, then the uncovered set has density at least $\eps$ for some $\eps = \eps(C) > 0$.

The first significant progress on these problems was made by Filaseta, Ford, Konyagin, Pomerance and Yu~\cite{FFKPY} in 2007, who proved (in a strong form) the conjecture of Erd\H{o}s and Graham, and took an important step towards solving Erd\H{o}s' minimum modulus problem by showing that the sum of reciprocals of the moduli of a covering system with distinct differences grows (quickly) with the minimum modulus. Building on their work, and in a remarkable breakthrough, Hough~\cite{H} resolved the minimum modulus problem in 2015, showing that in every covering system with distinct moduli, the minimum modulus is at most $10^{16}$. The method of~\cite{H} was further refined by Hough and Nielsen~\cite{HN}, who used it to prove that every covering system with distinct differences contains a difference that is divisible by either 2 or 3. However, Hough's method does not appear to be strong enough to resolve the problem of Erd\H{o}s and Selfridge, and it moreover gives little information about the density of the uncovered set. 

The main aim of this paper is to develop a general method for bounding the density of the uncovered set. Our method, which is based on that of Hough, but is actually somewhat simpler, turns out to be sufficiently powerful and flexible to allow us to also make further progress on the problem of Erd\H{o}s and Selfridge, and to prove Schinzel's conjecture. Our starting point is the following natural and beautiful question of Filaseta, Ford, Konyagin, Pomerance and Yu~\cite{FFKPY}. 

\begin{qu*}
Is it true that for each $C > 0$, there exist constants $M > 0$ and $\eps > 0$ such that the following holds: for every covering system whose distinct moduli satisfy
\begin{equation}\label{eq:question}
d_1,\ldots,d_k \ge M \qquad \text{and} \qquad \sum_{i=1}^k \frac{1}{d_i} < C,
\end{equation}
the uncovered set has density at least $\eps$? 
\end{qu*}

In Section~\ref{sec:construc}, below, we will answer this question negatively for every $C \ge 1$, by constructing (a sequence of) families of arithmetic progressions with arbitrarily large moduli, for which the density of the uncovered set is arbitrarily small, and $\sum_{i=1}^k \frac{1}{d_i} < 1$. However, this immediately suggests the following question: what condition on the (distinct) moduli $d_1,\ldots,d_k$, in place of~\eqref{eq:question}, would allow us to deduce a lower bound (depending only on $C$) on the density of the uncovered set? Our main theorem provides a sufficient condition that is close to best possible.

\begin{theorem}\label{thm:uncoveredDensity}
 Let\/ $\eps > 0$ and let\/ $\mu$ be the multiplicative function defined by\/
 \begin{equation}\label{def:mu}
  \mu(p^i) = 1 + \frac{(\log p)^{3+\eps}}{p}
 \end{equation}
 for all primes\/ $p$ and integers\/~$i \ge1$. There exists\/ $M > 0$ so that if\/
 $A_1,\dots,A_k$ are arithmetic progressions with distinct moduli\/
 $d_1,\dots,d_k \ge M$, and
 \[
  C = \sum_{i=1}^k \frac{\mu(d_i)}{d_i},
 \]
 then the density of the uncovered set\/
 $R := \Z \setminus\bigcup_{i=1}^k A_i$ is at least\/~$e^{-4C}/2$.
\end{theorem}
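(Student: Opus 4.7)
My plan is to construct a sub-probability measure $\nu$ on $\Z/N\Z$ (with $N=\lcm(d_1,\ldots,d_k)$) that is pointwise dominated by the uniform measure, is supported on the uncovered residues $R\bmod N$, and has total mass at least $e^{-4C}/2$; since the natural density of $R$ equals $|R\cap[0,N)|/N$, this immediately proves the theorem. Following the Hough-style framework, I enumerate the primes dividing $N$ in increasing order $q_1<q_2<\cdots<q_T$, set $k_s:=\max_i v_{q_s}(d_i)$ and $P_t:=\prod_{s\le t}q_s^{k_s}$, and for each $i$ let $\tau(i)$ denote the index of the largest prime factor of $d_i$, so that $d_i\mid P_{\tau(i)}$. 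The measure $\nu$ arises as the last element of a sequence $\nu_t$ on $\Z/P_t\Z$, each vanishing on $\bigcup_{\tau(i)\le t}A_i$, each bounded pointwise by $1/P_t$, and each obeying the \emph{flatness bound}
\[
\nu_t(a\bmod d)\le\tfrac{\mu(d)}{d}\,m_t
\qquad\text{for every }d\mid P_t\text{ and every }a,
\]
where $m_t:=\nu_t(\Z/P_t\Z)$.

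\textbf{Inductive step.} To pass from $\nu_{t-1}$ to $\nu_t$, I first lift to $\tilde\nu_t$ on $\Z/P_t\Z$ by tensoring with the uniform measure on $\Z/q_t^{k_t}\Z$; a CRT calculation shows that flatness is preserved in the form $\tilde\nu_t(a\bmod d)\le(\mu(d)/d)\,m_{t-1}$, using only that $\mu$ is multiplicative and $\mu(q_t^j)\ge 1$. Zeroing out $\tilde\nu_t$ on $B_t:=\bigcup_{\tau(i)=t}A_i$ then produces a measure of mass at least $m_{t-1}(1-c_t)$, where $c_t:=\sum_{\tau(i)=t}\mu(d_i)/d_i$, because flatness applied with $d=d_i$ and a union bound give $\tilde\nu_t(B_t)\le c_t\,m_{t-1}$. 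The delicate part is a subsequent \emph{smoothing} that restores flatness with respect to the new, smaller mass: after the subtraction some classes $a\bmod d$ may have $\nu_t'(a\bmod d)>(\mu(d)/d)\,m_t'$, and the excess must be trimmed. The key lemma would bound the extra trimmed mass at step $t$ by $m_{t-1}s_t$, with $\sum_t s_t\le C$, via a stratified analysis over the prime powers $q_t^j$ combined with a Chernoff-type estimate on the measure of heavy residues; the slack $(\log q_t)^{3+\eps}/q_t$ built into $\mu(q_t)$ is precisely what is needed to absorb these local trims, and must be introduced at every level so that pointwise domination by $1/P_t$ is preserved.

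\textbf{Conclusion and main obstacle.} With $m_t\ge m_{t-1}(1-c_t-s_t)$ at each step and $\sum_t(c_t+s_t)\le 2C$, the inequality $\log(1-x)\ge-2x$ for $x\in[0,1/2]$—together with separate handling of at most one ``bad'' step where the loss might exceed $1/2$, which accounts for the factor of $1/2$—gives $m_T\ge e^{-4C}/2$, completing the argument. The principal obstacle is the design and analysis of the smoothing step so that (i) the trimmed masses $s_t$ sum to $O(C)$ across all primes and (ii) pointwise domination by the uniform measure is preserved throughout. The counterexamples of Section~\ref{sec:construc} show that any weight of the form $\mu(p)=1+O(1/p)$ is insufficient, so the smoothing cannot be wasteful; I expect the exponent $3+\eps$ to reflect three compounding logarithmic factors---a union bound over the prime factors of each modulus, a stratification over the exponents $j\le k_t$, and a tail estimate on the excess mass per level---with $\eps$ providing exactly the summability margin over primes afforded by Mertens' theorem.
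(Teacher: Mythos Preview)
Your proposal has a genuine gap at precisely the point you flag as ``the principal obstacle'': the smoothing step is neither defined nor analysed. You assert that the trimmed mass at stage $t$ is bounded by $m_{t-1}s_t$ with $\sum_t s_t\le C$, but this is a hope, not an argument. The appeal to ``a Chernoff-type estimate on the measure of heavy residues'' does no work: your flatness hypothesis only supplies \emph{upper} bounds on $\nu_{t-1}(a\bmod d)$, not the concentration or lower bounds a Chernoff inequality would need. Concretely, for any $d\mid P_{t-1}$ and any class $a\bmod d$ disjoint from $B_t$, zeroing leaves $\nu_t'(a\bmod d)=\tilde\nu_t(a\bmod d)\le(\mu(d)/d)\,m_{t-1}$, whereas the new flatness you require is $\le(\mu(d)/d)\,m_t'$ with $m_t'$ strictly smaller; you have given no mechanism that prevents the aggregate of these violations, taken over \emph{all} $d\mid P_t$ simultaneously, from swamping the slack $(\log q_t)^{3+\eps}/q_t$. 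Without this lemma the recursion $m_t\ge m_{t-1}(1-c_t-s_t)$ is unsupported and the proof collapses.

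The paper's route sidesteps the difficulty. Instead of sub-probability measures supported on the uncovered set and dominated by uniform, it keeps genuine \emph{probability} measures $\Pr_i$ (total mass always~$1$) and allows them to \emph{exceed} uniform on uncovered points, capping the increase at each step by a factor $1/(1-\delta_i)$. Because of the cap, the measure of $B_i$ need not be driven to zero; instead one shows $\Pr_i(B_i)\le M_i^{(2)}/\bigl(4\delta_i(1-\delta_i)\bigr)$ via a second-moment bound (Lemmas~\ref{lem:B_i} and~\ref{lem:moments}), and the $\delta_i$ are chosen so these contributions sum to at most~$1/2$. The density $\Pr_0(R)$ is then recovered not by pointwise domination but by bounding the expected log-distortion $\E_n[\Delta_n]$ (Lemma~\ref{lem:distortion}) and applying Jensen's inequality. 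The second moment of $\alpha_i$ is exactly the substitute for your missing smoothing lemma: it quantifies how unevenly $B_i$ meets the fibres over $\Z_{Q_{i-1}}$, which is precisely what would have to be small for your trimming to be cheap.
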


Note that Hough's theorem is an immediate consequence of Theorem~\ref{thm:uncoveredDensity}. Our proof of Theorem~\ref{thm:uncoveredDensity} was inspired by that of Hough~\cite{H},
but is simpler in various important ways (for example, we do not need to appeal to the Lov\'asz
Local Lemma, and do not need his notion of quasi-randomness), and as a result we obtain a
somewhat simpler proof of his theorem, with a better bound on the minimum difference
(less than $10^6$). Our method of sieving, which (as we shall see) has a number of further applications, is outlined in Section~\ref{sec:sieve}.

We remark that the question of Filaseta, Ford, Konyagin, Pomerance and Yu~\cite{FFKPY} corresponds to replacing $\mu$ by the constant function~1. As noted above, we will show that the conclusion of the theorem does not hold under this weaker condition; in fact, we will prove that the theorem is close to best possible in the following much stronger sense. We will show (see Section~\ref{sec:construc}) that if \eqref{def:mu} is replaced by
\[
 \mu(p^i) = 1 + \frac{\lambda}{p}
\]
for any fixed $\lambda > 0$, then there exists a constant $C = C(\lambda) > 0$ such that the following holds: for every $M > 0$ and $\eps>0$, there exists a finite collection of arithmetic progressions, with distinct 
moduli $d_1,\dots,d_k \ge M$ satisfying $\sum_{i=1}^k\frac{\mu(d_i)}{d_i} \le C$, such that the uncovered set has density less than~$\eps$. It would be extremely interesting to characterize the functions $\mu$ such that, under the conditions of Theorem~\ref{thm:uncoveredDensity}, the density of the uncovered set is bounded from below by a constant $\delta(C) > 0$ depending only on $C$. 

Although our sieve was developed to control the density of the uncovered set, it turns out that it can be used to prove a number of additional interesting results about covering systems. We will focus here on the two classical examples mentioned above: the question of Erd\H{o}s and Selfridge, and the conjecture of Schinzel. Over 50 years ago, Erd\H{o}s and Selfridge (see~\cite{FFK} or~\cite{Sch}) asked whether or not there exist covering systems with distinct odd moduli.\footnote{Moreover, as recounted in~\cite{FFK}, Erd\H{o}s (who thought that such coverings are likely to exist) offered \$25 for a proof that there is no covering with these properties, and Selfridge (who expected the opposite) offered \$300 (later increased to \$2000) for a construction of such a covering.} Schinzel~\cite{Sch} showed that if no such covering system exists, then for every polynomial $f(x) \in \Z[X]$ with $f \not\equiv 1$, $f(0) \ne 0$ and $f(1) \ne -1$, there exists an (infinite) arithmetic progression of values of $n \in \Z$ such that $x^n + f(x)$ is irreducible over the rationals. He also showed that this would imply the following statement: in any covering system, one of the moduli divides another. In Section~\ref{Schinzel:sec} we will prove this latter statement, known as Schinzel's conjecture.


\begin{theorem}\label{thm:Schinzel}
If\/ $\cA$ is a finite collection of arithmetic progressions that covers the integers, then at least one of the moduli divides another.
\end{theorem}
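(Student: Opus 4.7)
I argue by contradiction. Suppose that $\cA = \{A_1,\dots,A_k\}$ is a covering system with distinct moduli $d_1,\dots,d_k$ in which no $d_i$ divides any $d_j$ for $i\ne j$, and choose such a counterexample with $k$ minimal. The plan is to adapt the sieve underlying Theorem~\ref{thm:uncoveredDensity} to show that the uncovered set $R := \Z \setminus \bigcup_i A_i$ has strictly positive density, contradicting the assumption that $\cA$ covers~$\Z$.

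A direct application of Theorem~\ref{thm:uncoveredDensity} is not available, since the weighted sum $C = \sum_i \mu(d_i)/d_i$ need not be bounded for a general covering system. The key new structural input I would exploit is the antichain hypothesis: for every prime $p$ and every exponent $a \geq 0$, the $p$-free parts $d_i/p^a$ of the moduli $d_i$ with $v_p(d_i) = a$ are still pairwise non-dividing, so the antichain property is preserved under removing any fixed prime factor and passing to a "sub-covering" on a smaller cyclic group. Concretely, I would process the primes $p_1 < p_2 < \cdots$ dividing $L := \lcm(d_1,\dots,d_k)$ in order, and at the $j$-th stage condition on a residue class $r_j$ modulo $p_j^{v_{p_j}(L)}$ chosen greedily (as in the proof of Theorem~\ref{thm:uncoveredDensity}) so that the conditional fraction of integers compatible with $(r_1,\dots,r_j)$ that have been completely covered by progressions whose moduli are supported on $\{p_1,\dots,p_j\}$ remains strictly less than~$1$. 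Running the procedure through all primes dividing $L$ would then produce a residue class modulo $L$ lying entirely in $R$.

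The main obstacle is bounding the sieve's loss at each step without access to a global bound on $C$. My hope is that the antichain structure forces the loss at prime $p_j$ to be at most a quantity of order $\mu(p_j^{v_{p_j}(L)})/p_j = 1/p_j + O((\log p_j)^{3+\eps}/p_j^2)$: the non-dividing condition guarantees that progressions sharing $p_j$-adic data split into many distinct "residual" arithmetic progressions on the smaller torus, so the Hough-type sieve cancellation remains valid \emph{even without} the weighted-sum hypothesis. Minimality of the counterexample is used twice: first, to ensure that every progression is essential, so that each $d_i$ genuinely constrains the sieve; and second, to prevent degenerate situations where a subset of $\cA$ already covers $\Z$ and the antichain property of the remaining moduli could be vacuous. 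A secondary technical challenge is the contribution of "small" primes, where the sieve's error terms are too large to absorb; these should be handled by directly enumerating residues modulo the product of the small primes and running the sieve conditionally on each, so that the sieve only sees progressions whose moduli are supported on large primes, where the estimate above gives a genuine contraction.
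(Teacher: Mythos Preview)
Your proposal has a genuine gap at its central step. The claim you hope for --- that the antichain hypothesis forces the sieve loss at each prime $p_j$ to be $O(1/p_j)$ --- is false. Take moduli of the form $2qp_j$ as $q$ ranges over the primes in $(3,p_j)$: these are pairwise incomparable under divisibility, yet their contribution to $M_j^{(1)}$ alone is of order $\frac{1}{p_j}\sum_q \frac{1}{q} \asymp \frac{\log\log p_j}{p_j}$, and one can push such examples further. More to the point, an antichain of $p_{j-1}$-smooth integers can have $\sum_m 1/m$ arbitrarily large once four or more primes are available, so your ``residual antichain on the smaller torus'' observation, while correct, does not deliver a per-prime contraction. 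The enumeration-of-small-primes idea does not rescue this either, since the obstruction is present at every scale, not just for small~$p_j$; and without per-prime losses summing to something strictly below~$1$, the sieve cannot close.

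The paper's proof uses the antichain hypothesis in a much narrower and sharper way. First, minimality is used to show that no modulus is a prime power (if $d_i = p^a$, the other moduli have $p$-exponent at most $a-1$, and a short argument gives a contradiction). Then two explicit combinatorial lemmas are proved by hand: a $5$-smooth antichain with no prime power has $\sum_d 1/d \le 1/3$, and any two $5$-smooth antichains $A,B$ satisfy $\sum_{a\in A,\,b\in B} 1/\lcm(a,b) \le 1.7$. The first gives $\mu_3 \ge 2/3$ after processing $2,3,5$ with $\delta_1=\delta_2=\delta_3=0$. For $i\ge 4$, each modulus is factored as ($5$-smooth part) $\times$ (part supported on $p_4,\dots,p_{i-1}$) $\times\, p_i^j$; the antichain condition is applied \emph{only} to the $5$-smooth factors --- where, with just three primes in play, it is genuinely restrictive --- yielding the constant $\kappa = 1.7$ in the second-moment bound~\eqref{kappa:def:assumption}, and then $f_3 = \kappa/\mu_3 \le 2.55 < g_3$ via Corollary~\ref{cor:gk}. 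Thus the antichain input is leveraged exactly on $\{2,3,5\}$, and for the remaining primes the argument falls back on the unconditional Theorem~\ref{prop:moments}; this is precisely the refinement your sketch is missing.
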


Unfortunately, our method does not seem to be strong enough to resolve the Erd\H{o}s--Selfridge problem (see the discussion in Section~\ref{sec:general:method}). However, it does allow us to make some further progress towards a solution; in particular, we can prove that no such covering system exists under the additional constraint that the moduli are square-free. Since this application of our sieve requires several additional (somewhat technical) ideas, we will give the details elsewhere~\cite{SFES}. 

\begin{conj}\label{thm:SFES}
If\/ $\cA$ is a finite collection of arithmetic progressions with distinct square-free moduli that covers the integers, then at least one of the moduli is even.
\end{conj}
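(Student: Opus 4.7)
The plan is to argue by contradiction: suppose $\cA = \{A_1, \dots, A_k\}$ is a covering system whose distinct moduli $d_1 < \dots < d_k$ are all odd and square-free, and produce an integer missed by every $A_i$ by adapting the sieve underlying Theorem~\ref{thm:uncoveredDensity}. A direct application of that theorem is doomed, since a covering forces $\sum_i 1/d_i \ge 1$, and if the $d_i$ have many small prime factors then $C = \sum_i \mu(d_i)/d_i$ can be arbitrarily large, rendering $e^{-4C}/2$ a trivial bound. One must therefore exploit the odd and square-free hypotheses more structurally.

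The first step is to set up a variant of the sieve suited to square-free moduli. Since each prime $p$ divides each $d_i$ at most once, the potential function tracking the weighted mass of surviving progressions should factorize cleanly across primes, with no correction term from higher prime powers. I would expect to replace the function $\mu$ of Theorem~\ref{thm:uncoveredDensity} by a multiplicative weight depending only on the \emph{set} of primes dividing each modulus, and to reprove the main one-step sieve inequality in this cleaner setting, plausibly with a larger weight on small odd primes chosen to compensate for the absence of $2$.

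The central obstacle is precisely this absence of $2$: in Hough's argument and in Theorem~\ref{thm:uncoveredDensity}, the prime $2$ contributes essentially a factor $1/2$ to the surviving density at negligible cost, and when every $d_i$ is odd this contribution is lost. A natural route is a two-phase strategy. First, choose a good residue class modulo $\prod_{p \le P_0} p$ for some threshold $P_0$; here square-freeness is exactly what forces each $d_i$ to restrict to a \emph{subset} of $\{3,5,\dots,P_0\}$ on the small-prime side, so the local system below $P_0$ has bounded combinatorial complexity $2^{\pi(P_0)}$ and can be analysed by enumeration or a Fourier-type argument. Second, apply the square-free sieve from step one to the residual system of progressions whose moduli are supported above $P_0$. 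Square-freeness again plays an essential role: it gives a clean multiplicative decoupling between the small-prime and large-prime contributions of each $d_i$, so that the two phases can be run independently.

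The hardest step will be to balance the two phases quantitatively. One must select $P_0$, the tail weight function, and the procedure for choosing good residues below $P_0$ so that the combinatorial loss in phase one is dominated by the sieve-theoretic gain in phase two, \emph{uniformly} over covering systems of every possible smooth/rough profile. Systems that are ``smooth-dominated'' (most of $\sum_i 1/d_i$ concentrated on moduli whose prime factors are all small) and those that are ``rough-dominated'' behave rather differently, and pushing a single threshold through both regimes will likely require either a layered sieve with several thresholds or an inductive argument on the prime support of the moduli; this is presumably where the ``several additional (somewhat technical) ideas'' referenced in the excerpt enter.
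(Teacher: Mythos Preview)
The paper contains no proof of this statement to compare against. The result is labelled as a \emph{Conjecture} (environment \texttt{conj}) and the surrounding text is explicit: ``Since this application of our sieve requires several additional (somewhat technical) ideas, we will give the details elsewhere~[SFES].'' So there is no ``paper's own proof'' here; the authors merely announce the result and defer the argument to a companion paper.

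As for your proposal itself, it is not a proof but a plan, and you are candid about this (``I would expect'', ``plausibly'', ``will likely require''). The broad outline --- exploit square-freeness to get clean multiplicative decoupling, handle small odd primes by some combinatorial or explicit argument, then run the sieve of Section~\ref{sec:sieve} on the large-prime part --- is consistent with what the paper hints at, and your identification of the core difficulty (the loss of the prime~$2$, which in Section~\ref{sec:general:method} is what makes $g_1<1$) is accurate. But none of the actual work is done: you have not specified the weight function, the threshold, the residue-selection procedure, or how the two phases interact quantitatively, and you yourself flag the balancing of phases as ``the hardest step''. So this is a reasonable sketch of where the difficulties lie, but it is not a proof and cannot be assessed as one. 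Since the paper also provides no proof, there is simply nothing to compare.
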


A different strengthening of the condition in the Erd\H{o}s--Selfridge problem was considered recently by Hough and Nielsen~\cite{HN}, who showed that in any covering system with distinct moduli, one of the moduli is divisible by either $2$ or $3$. Their proof required careful optimization of their techniques, and it seems difficult to use it to strengthen their result. Using our methods, we will give a short proof of the following strenthening of their theorem.

\begin{theorem}\label{thm:235}
 Let\/ $\cA=\{A_d : d \in D\}$ be a finite collection of arithmetic progressions with distinct
 moduli that covers the integers, and let $Q = \lcm(D)$ be the least common multiple of the
 moduli. Then either\/ $2\mid Q$, or\/ $9\mid Q$, or\/ $15\mid Q$.
\end{theorem}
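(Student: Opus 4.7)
The strategy is proof by contradiction. Suppose $\cA = \{A_d : d \in D\}$ covers $\Z$ but $Q = \lcm(D)$ is divisible by none of $2$, $9$, $15$. Unpacking the hypothesis: every $d \in D$ is odd; $v_3(d) \le 1$ for every $d$; and either no $d \in D$ is divisible by $3$, or no $d \in D$ is divisible by $5$.

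My plan is to apply the sieve of Section~\ref{sec:sieve}, the same device used to prove Theorem~\ref{thm:uncoveredDensity}. I would choose a product probability measure $\nu = \bigotimes_{p \mid Q} \nu_p$ on $\Z/Q\Z$, where each $\nu_p$ is a distribution on $\Z/p^{v_p(Q)}\Z$. The sieve processes the arithmetic progressions one by one, at each step conditioning the current measure on the complement of $A_d$; the total mass lost is bounded by $\sum_{d \in D} f(d)$, where $f$ is a multiplicative cost function determined by the $\nu_p$. To reach a contradiction it suffices to arrange $\sum_d f(d) < 1$, since then a positive-mass set escapes the union of the APs.

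I would calibrate the measures $\nu_p$ so as to exploit the three restrictions. For the large primes in the support of $\nu$, taking $\nu_p$ essentially uniform yields a contribution to $f(d)$ controlled by $\mu(d)/d$ in the sense of Theorem~\ref{thm:uncoveredDensity}, with $\mu$ as in~\eqref{def:mu}, and a sum over distinct moduli whose prime factors are all large is easily made small. For the small primes, I would tune $\nu_p$ to produce an explicit saving on each modulus divisible by $p$. Crucially, since $v_3(d) \le 1$ for every $d$, taking $\nu_3$ supported on the two nonzero residues modulo~$3$ saves a factor of $1/2$ on every modulus with $3 \mid d$; and since at least one of $3$, $5$ divides no modulus, the cost that would ordinarily be incurred by that prime vanishes entirely, providing enough slack in the budget.

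The principal obstacle, as in the work of Hough--Nielsen~\cite{HN}, is the numerical balancing: the savings from $\nu_3$ (or from the absent prime) must outweigh the costs from the other small primes and from the $\mu(d)/d$-type contributions of moduli with only large prime factors. This amounts to classifying the moduli by their small-prime factorization and carefully estimating the contribution of each class, a case analysis of the same sort as in~\cite{HN} but exploiting the stronger structural restrictions provided by $9 \nmid Q$ and $15 \nmid Q$. Once this bookkeeping is in place, the contradiction is immediate, and no ideas beyond those developed for Theorem~\ref{thm:uncoveredDensity} are required.
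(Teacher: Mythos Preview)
Your high-level plan (contradiction via the sieve, exploiting the restrictions on $Q$) is right, but your description of the sieve mechanism does not match the one in Section~\ref{sec:sieve}, and the specific modifications you propose do not fit into that framework. The sieve does not process the progressions one by one, nor does it work with a fixed product measure $\bigotimes_p \nu_p$. It processes the \emph{primes} in order: at step $i$ it handles all progressions whose largest prime factor is $p_i$, and it updates an adaptive (non-product) measure $\Pr_i$ via the distortion rule~\eqref{def:P_i}, governed by parameters $\delta_i$. The quantity to be made less than $1$ is not $\sum_d f(d)$ for some multiplicative $f$, but $\sum_i \Pr_i(B_i)$, bounded through the first and second moments $M_i^{(1)}$, $M_i^{(2)}$ (Lemma~\ref{lem:B_i}, Theorem~\ref{prop:moments}). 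In particular, ``taking $\nu_3$ supported on the nonzero residues mod $3$'' has no analogue here: the $\Pr_i$ are determined by the $\delta_i$ and by $\cA$, not chosen freely as product measures.

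The actual proof is far shorter than the Hough--Nielsen-style case analysis you anticipate. One first reduces, via Theorem~\ref{thm:23}, to the case $Q = 3Q'$ with $\gcd(Q',30)=1$, and then runs the Section~\ref{sec:general:method} machinery with $i_0 = 3$, $\delta_1=\delta_2=\delta_3=0$. The hypothesis $9 \nmid Q$ is used not to tweak a measure at the prime $3$, but to sharpen the \emph{second-moment} bound: in~\eqref{eq:sum:of:powers} the sum over $t$ at $j=2$ collapses to the single term $t=1$, so~\eqref{kappa:def:assumption} holds with $\kappa = 1 + 3/p_2 = 2$ instead of the generic value from~\eqref{kappa:sufficient}. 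Together with the trivial first-moment bound $\mu_3 = \mu_2 \ge 2/3$ (since $5 \nmid Q$), this gives $f_3 = \kappa/\mu_3 \le 3 < g_3$ from Table~\ref{t:1}, and Corollary~\ref{cor:gk} finishes. No further numerical balancing or classification of moduli by small-prime factorization is needed.
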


In other words, either there is an even~$d$, a $d$ divisible by $3^2=9$, or there are $d_1,d_2\in D$ (possibly equal) with $3\mid d_1$ and $5\mid d_2$. We remark that we are unable to prove that a \emph{single\/} $d\in D$ has $15\mid d$ in this last case.

The rest of this paper is organized as follows: in Section~\ref{sec:sieve} we outline the sieve
we will use in the proofs, and in Section~\ref{sec:general} we state and prove our main technical
results, Theorems~\ref{thm:general} and~\ref{prop:moments}. In Section~\ref{sec:proof} we complete the proof of Theorem~\ref{thm:uncoveredDensity}, and in Section~\ref{sec:EGconj} we prove a variant of the main result of~\cite{FFKPY}. In Section~\ref{sec:general:method} we explain how our sieve can be used to prove the non-existence of coverings sets with certain properties, and in Sections~\ref{sec:ES}--\ref{Schinzel:sec} we use this method to improve Hough's bound on the minimum modulus, and to prove Theorems~\ref{thm:Schinzel} and~\ref{thm:235}. Finally, in Section~\ref{sec:construc}, we provide the constructions described above.

\section{The Sieve}\label{sec:sieve}

In this section we will outline the proof of Theorem~\ref{thm:uncoveredDensity}. We consider a
finite set of moduli denoted by $D\subseteq\N$ and a finite collection $\cA=\{A_d : d\in D\}$ of
arithmetic progressions, where $A_d = a_d + d\Z$ is an arithmetic progression with modulus~$d$.
The goal is to estimate the density of the uncovered set
\[
 R := \Z \setminus \bigcup_{d\in D} A_d.
\]

Rather than considering the entire collection of progressions $\cA$ all at once, we expose the
progressions `prime by prime' and track how the density of the uncovered set evolves.
To be more precise, let $p_1$, $p_2$, \dots, $p_n$ be the distinct prime factors of $Q := \lcm(D)$
(usually, but not necessarily, listed in increasing order) so that
\[
 Q = \prod_{j=1}^n p_j^{\gamma_j}
\]
for some integers $\gamma_j \ge 1$. Define, for each $0 \le i \le n$,
\[
 Q_i:=\prod_{j=1}^i p_j^{\gamma_j}
\]
and write
\[
 D_i := \big\{d\in D: d\mid Q_i\big\}\qquad \text{and} \qquad \cA_i:=\big\{A_d:d \in D_i\big\}
\]
for the set of differences and the corresponding arithmetic progressions whose prime factorization
only includes the first $i$ of these primes. (In particular, $Q_0=1$ and $D_0=\cA_0 = \emptyset$.)
Note that, although $\lcm(D_i)\mid Q_i$, we do not necessarily have $\lcm(D_i)=Q_i$, since earlier
primes can occur to higher powers in later moduli. Let
\[
 R_i:=\Z\setminus\bigcup_{d\in D_i}A_d = \Z\setminus\bigcup_{A_d\in\cA_i}A_d,
\]
be the set of elements not contained in any of the progressions of~$\cA_i$, so that $R_0=\Z$ and
$R_n = R$. We also write $N_i := D_i\setminus D_{i-1}$ for the set of ``new'' differences at
the $i$th stage, and define
\begin{equation}\label{def:B_i}
 B_i := \bigcup_{d\in N_i} A_d
\end{equation}
to be the union of the arithmetic progressions exposed at step~$i$, so that $R_i = R_{i-1} \setminus B_i$.

It will be convenient to consider $R_i$ as a subset of the cyclic group $\Z_{Q_i}$ (or of $\Z_Q$),
which is possible because for each $d \in D_i$ the set $A_d$ is periodic with period $d \mid Q_i$.
In particular, note that the density of~$R_i$ in $\Z$ is equal to the measure of the set $R_i$ in
the uniform probability measure on the finite set $\Z_{Q_i}$. During the proof we will in fact need
to consider \emph{non-uniform\/} probability measures $\Pr_i$ on $\Z_{Q_i}$; note that each such
measure can be extended (uniformly on each congruence class mod~$Q_i$) to a probability measure
on $\Z_Q$.\footnote{To be precise, we can set
$Q \cdot \Pr_i(x+Q\Z):=Q_i \cdot \Pr_i(x+Q_i\Z)$. Note that, since
$\gcd(Q_i,Q/Q_i)=1$, we can (via the Chinese Remainder Theorem) consider $\Pr_i$ on
$\Z_Q \cong \Z_{Q_i} \times \Z_{Q/Q_i}$ as a product measure of $\Pr_i$ on $\Z_{Q_i}$
with the uniform measure on $\Z_{Q/Q_i}$.} We will borrow (and abuse) terminology from measure
theory by calling a subset $S \subseteq \Z_Q$ (or a $Q$-periodic set $S \subseteq \Z$)
$Q_i$-\emph{measurable\/} if $S$ is a union of congruence classes mod~$Q_i$.

\subsection{A sketch of the method}

The basic idea is quite simple. We construct measures $\Pr_i$ in such a way that $\Pr_i(B_i)$
is small, but without changing the measure of $B_j$ for any $j < i$. It follows that the measure of $\Z\setminus R$ in the final measure $\Pr_n$ is at most $\sum_i \Pr_i(B_i)$, and thus if this quantity is less than 1, it follows that the arithmetic progressions do not cover~$\Z$. 

To bound $\Pr_i(B_i)$, we use the 1st and 2nd moment methods (see Lemma~\ref{lem:B_i}, below). More precisely, we bound the expectation (in the measure $\Pr_{i-1}$) of the proportion of the `fibre' $F(x) = \big\{ (x,y) : y \in \Z_{p_i^{\gamma_i}} \big\}$ of $x \in \Z_{Q_{i-1}}$ removed in step $i$, and the expectation of the square of this quantity. Bounding these moments is not too difficult, see Lemmas~\ref{lem:moments} and~\ref{lem:sums}, below. 

Finally, to deduce a lower bound on the uncovered set in the \emph{uniform\/} measure, $\Pr_0$, we shall need to bound the average distortion $\Pr_n(x)/\Pr_0(x)$. We will design the measures $\Pr_i$ so that the `average' here (which we take in the $\Pr_n$-measure) is enough (by a convexity argument) to give such a lower bound, see Lemma~\ref{lem:distortion}.


\subsection{The probability measures $\Pr_i$}

We will next define the non-uniform probability measures $\Pr_i$, which are inspired by (but different
in important ways to) a sequence of measures used in~\cite{H}, and which will play a key role in the
proof of Theorem~\ref{thm:uncoveredDensity}. The rough idea is that we would like to distort the space
so as to `blow up' the uncovered set, but without increasing the measure of any single point too much.
More precisely, we will define the measures inductively, choosing $\Pr_i$ so that it agrees with $\Pr_{i-1}$
on $Q_{i-1}$-measurable sets, is not too much larger that $\Pr_{i-1}$ anywhere, and (subject to these conditons) is as small as possible on the set~$B_i$, the union of the arithmetic progressions removed at step~$i$.

First, let $\Pr_0$ be the trivial probability measure on $\Z_{Q_0} = \Z_1$, or (equivalently, by the
comments above) the uniform measure on $\Z_Q$. Now fix $\delta_1,\dots,\delta_n \in [0,1/2]$,
let $i \in [n]$, and suppose we have already defined a probability measure $\Pr_{i-1}$ on $\Z_{Q_{i-1}}$.
Our task is to define a probability measure $\Pr_i$ on $\Z_{Q_i}$.

In order to view $R_{i-1}$ and $R_i$ as subsets of the same set, let us (by the Chinese Remainder Theorem)
represent $\Z_{Q_i}$ as $\Z_{Q_{i-1}} \times \Z_{p_i^{\gamma_i}}$, and denote elements of $\Z_{Q_i}$
by pairs $(x,y)$, where $x \in \Z_{Q_{i-1}}$ and $y \in \Z_{p_i^{\gamma_i}}$. We may view $R_{i-1}$
as a collection of `fibres' of the form $F(x) = \{ (x,y) : y \in \Z_{p_i^{\gamma_i}}\}$, noting that
$\Pr_{i-1}$ is uniform on each fibre, and view $R_i$ as being obtained from $R_{i-1}$ by removing
the points that are contained in the new progressions of $\cA_i \setminus\cA_{i-1}$.

Now, for each $x \in \Z_{Q_{i-1}}$, define
\begin{equation}\label{def:alpha}
 \alpha_i(x) = \frac{\Pr_{i-1}\big( F(x) \cap B_i \big)}{\Pr_{i-1}(x)}
 = \frac{\big| \big\{ y \in \Z_{p_i^{\gamma_i}} : (x,y)\in B_i \big\} \big|}{p_i^{\gamma_i}},
\end{equation}
that is, the proportion of the fibre $F(x)$ that is removed at stage~$i$.
The probability measure~$\Pr_i$ on $\Z_{Q_i}$ is defined as follows:
\begin{equation}\label{def:P_i}
 \Pr_i(x,y) :=
 \begin{cases}
   \max\bigg\{ 0, \, \ds\frac{\alpha_i(x)-\delta_i}{\alpha_i(x)(1-\delta_i)} \bigg\} \cdot \Pr_{i-1}(x,y),
   &\text{if }(x,y)\in B_i;\\[2ex]
   \min\bigg\{ \ds\frac{1}{1-\alpha_i(x)}, \, \frac{1}{1-\delta_i} \bigg\} \cdot \Pr_{i-1}(x,y),
   &\text{if }(x,y)\notin B_i.
 \end{cases}
\end{equation}
This is an important (and slightly technical) definition, and therefore deserves some additional
explanation. First, observe that if $\alpha_i(x) \le \delta_i$, then $\Pr_i(x,y) = 0$ for every
element of $Q_i$ that is covered in step~$i$, and that the measure is increased proportionally
elsewhere to compensate. On the other hand, for those $x \in \Z_{Q_{i-1}}$ for which
$\alpha_i(x) > \delta_i$, we `cap' the distortion by increasing the measure at each point not
covered in step $i$ by a factor of $1 / (1-\delta_i)$, and decreasing the measure on removed
points by a corresponding factor.

The following simple properties of the measure $\Pr_i$ will be useful in the proof.

\begin{lemma}\label{obs:Qmeas}
 $\Pr_i(S) = \Pr_{i-1}(S)$ for any\/ $Q_{i-1}$-measurable set\/~$S$.
\end{lemma}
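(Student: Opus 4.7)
The plan is to reduce the claim to a single-fibre identity and then verify it by a direct calculation from the definition \eqref{def:P_i}. Under the Chinese Remainder identification $\Z_{Q_i}\cong\Z_{Q_{i-1}}\times\Z_{p_i^{\gamma_i}}$, a set $S\subseteq\Z_{Q_i}$ is $Q_{i-1}$-measurable precisely when it is a disjoint union of fibres, $S=\bigsqcup_{x\in T}F(x)$ for some $T\subseteq\Z_{Q_{i-1}}$. Moreover, by the footnote convention, the lift of $\Pr_{i-1}$ to $\Z_{Q_i}$ is uniform on each fibre $F(x)$ with total mass $\Pr_{i-1}(x)$. So by countable additivity it suffices to prove the per-fibre identity
\[
 \Pr_i\big(F(x)\big) \;=\; \Pr_{i-1}(x)
\]
for every $x\in\Z_{Q_{i-1}}$; summing over $x\in T$ then gives $\Pr_i(S)=\Pr_{i-1}(T)=\Pr_{i-1}(S)$.

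To establish the fibre identity, fix $x$ and observe that the definition \eqref{def:P_i} assigns one fixed multiplier $m_{\mathrm{in}}$ to all points of $F(x)\cap B_i$ (of which there are $p_i^{\gamma_i}\alpha_i(x)$) and another fixed multiplier $m_{\mathrm{out}}$ to all points of $F(x)\setminus B_i$ (of which there are $p_i^{\gamma_i}(1-\alpha_i(x))$). Since $\Pr_{i-1}$ is uniform on $F(x)$ with per-point mass $\Pr_{i-1}(x)/p_i^{\gamma_i}$, we get
\[
 \Pr_i\big(F(x)\big) \;=\; \Big(\alpha_i(x)\,m_{\mathrm{in}} + (1-\alpha_i(x))\,m_{\mathrm{out}}\Big)\,\Pr_{i-1}(x).
\]
It remains to check that the bracketed weighted average equals $1$ in each of the two cases.

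The case split is dictated by whether $\alpha_i(x)\le\delta_i$ or $\alpha_i(x)>\delta_i$, since this determines which branch of the $\max$ and $\min$ in \eqref{def:P_i} is active (note that $\alpha_i(x)\le\delta_i$ is equivalent to $1/(1-\alpha_i(x))\le 1/(1-\delta_i)$). In the first case, $m_{\mathrm{in}}=0$ and $m_{\mathrm{out}}=1/(1-\alpha_i(x))$, and the average telescopes to $1$. In the second case, $m_{\mathrm{in}}=(\alpha_i(x)-\delta_i)/(\alpha_i(x)(1-\delta_i))$ and $m_{\mathrm{out}}=1/(1-\delta_i)$, and a one-line simplification gives
\[
 \alpha_i(x)\,m_{\mathrm{in}} + (1-\alpha_i(x))\,m_{\mathrm{out}} \;=\; \frac{(\alpha_i(x)-\delta_i)+(1-\alpha_i(x))}{1-\delta_i} \;=\; 1,
\]
as required.

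There is no real obstacle here: the lemma is essentially a normalisation check, and the two branches in the definition of $\Pr_i$ were designed so that each fibre's mass is exactly preserved. The only points of care are pedantic ones, namely to use the convention from the footnote to make sense of comparing $\Pr_{i-1}$ and $\Pr_i$ (which live on different cyclic groups) and to verify that the two branches of the $\max$/$\min$ switch at exactly the threshold $\alpha_i(x)=\delta_i$ so that $m_{\mathrm{in}}$ and $m_{\mathrm{out}}$ are well-defined on the same side of the case split.
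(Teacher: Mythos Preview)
Your proof is correct and follows essentially the same approach as the paper: both reduce to the per-fibre identity $\Pr_i(F(x))=\Pr_{i-1}(x)$, split into the cases $\alpha_i(x)\le\delta_i$ and $\alpha_i(x)>\delta_i$, and verify the weighted average $\alpha_i(x)\,m_{\mathrm{in}}+(1-\alpha_i(x))\,m_{\mathrm{out}}=1$ in each case before summing over~$x$. Your write-up is slightly more explicit about the fibre decomposition and the uniform lift of $\Pr_{i-1}$, but the computation is identical.
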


\begin{proof}
Let $x \in \Z_{Q_{i-1}}$, and observe first that if $\alpha_i(x) \le \delta_i$, then
\[
 \Pr_i(x)=
  \Big(\alpha_i(x)\cdot 0 + \big(1-\alpha_i(x)\big) \cdot \frac{1}{1-\alpha_i(x)}\Big) \cdot \Pr_{i-1}(x)
  = \Pr_{i-1}(x),
\]
where $\Pr_i(x) = \sum_{(x,y) \in F(x)} \Pr_i(x,y)$. On the other hand, if $\alpha_i(x) > \delta_i$ then
\[
 \Pr_i(x)=
 \Big(\alpha_i(x)\cdot \frac{\alpha_i(x)-\delta_i}{\alpha_i(x)(1-\delta_i)}
  + \big(1-\alpha_i(x)\big) \cdot \frac{1}{1-\delta_i}\Big) \cdot \Pr_{i-1}(x) = \Pr_{i-1}(x).
\]
Summing over $x \in S$, we obtain $\Pr_i(S) = \Pr_{i-1}(S)$, as claimed.
\end{proof}

\begin{lemma}\label{obs:probs}
 For any set\/ $S \subseteq \Z_Q$, we have
\begin{equation}\label{e:dom}
 \Pr_i(S) \le \frac{1}{1-\delta_i} \cdot \Pr_{i-1}(S).
\end{equation}
Moreover, if $S \subseteq B_i$ then
\begin{equation}\label{e:baddom}
 \Pr_i(S) \le \Pr_{i-1}(S).
\end{equation}
\end{lemma}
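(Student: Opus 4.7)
The plan is to prove both inequalities pointwise: I would show that for every $(x,y)\in\Z_{Q_i}$ one has $\Pr_i(x,y)\le\tfrac{1}{1-\delta_i}\Pr_{i-1}(x,y)$ in general, and $\Pr_i(x,y)\le\Pr_{i-1}(x,y)$ whenever $(x,y)\in B_i$. Summing over the elements of $S$ then yields the two displayed bounds.

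For \eqref{e:dom}, I would split into the two cases of the definition \eqref{def:P_i}. If $(x,y)\notin B_i$, the scaling factor is $\min\{1/(1-\alpha_i(x)),\,1/(1-\delta_i)\}$, which is at most $1/(1-\delta_i)$ by definition of $\min$. If $(x,y)\in B_i$, the scaling factor is at most $\tfrac{\alpha_i(x)-\delta_i}{\alpha_i(x)(1-\delta_i)}$, and comparing this with $\tfrac{1}{1-\delta_i}$ amounts to checking $\alpha_i(x)-\delta_i\le\alpha_i(x)$, which holds because $\delta_i\ge 0$; and clearly $0\le\tfrac{1}{1-\delta_i}$ as well, so the $\max$ is also at most $1/(1-\delta_i)$.

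For \eqref{e:baddom}, every point of $S$ lies in $B_i$ and therefore falls in the first branch of \eqref{def:P_i}. Hence it suffices to check that
\[
 \max\bigg\{0,\ \frac{\alpha_i(x)-\delta_i}{\alpha_i(x)(1-\delta_i)}\bigg\}\le 1
\]
for every $x$ with $\alpha_i(x)>0$ (when $\alpha_i(x)=0$ the fibre contributes nothing to $B_i$, so there is nothing to check). Clearing denominators, this inequality becomes $\alpha_i(x)-\delta_i\le\alpha_i(x)(1-\delta_i)$, which rearranges to $\delta_i(1-\alpha_i(x))\ge 0$, valid since $\delta_i\ge 0$ and $\alpha_i(x)\le 1$.

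There is no real obstacle here: both statements reduce to elementary one-line inequalities once one unfolds \eqref{def:P_i}. The only subtlety worth flagging is to remember to handle the case $\alpha_i(x)=0$ (or more generally $\alpha_i(x)\le\delta_i$) in \eqref{e:baddom}, where the first branch of \eqref{def:P_i} is forced to be $0$ by the $\max$ and the inequality holds trivially.
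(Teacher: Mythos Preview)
Your proof is correct and follows essentially the same approach as the paper: the paper simply notes that both inequalities hold for each atom $S=\{(x,y)\}$ directly from the definition~\eqref{def:P_i}, and then follow in general by additivity. You have spelled out the pointwise case analysis in more detail than the paper does, but the structure of the argument is identical.
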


\begin{proof}
Both inequalities follow immediately (for each `atom' $S=\{(x,y)\}$, and hence also in general,
by additivity) from~\eqref{def:P_i}.
\end{proof}

Given a function $f \colon \Z_Q \to\R_{\ge0}$, let us define the expectation of $f$ with respect
to $\Pr_i$ to be
\[
 \E_i\big[ f(x) \big] := \sum_{x \in \Z_Q} f(x) \Pr_i(x).
\]
By the observations above, we have
\[
 \E_i\big[ f(x) \big] \le \frac{1}{1-\delta_i} \cdot \E_{i-1}\big[f(x)\big],
\]
and moreover $\E_i[f(x)]\le\E_{i-1}[f(x)]$ if $f$ is supported on~$B_i$,
and $\E_i[f(x)]=\E_{i-1}[f(x)]$ if $f$ is $Q_{i-1}$-measurable.

\section{A general theorem}\label{sec:general}

In this section we will prove our main technical results, Theorems~\ref{thm:general} and~\ref{prop:moments}, below, which together imply Theorems~\ref{thm:uncoveredDensity},~\ref{thm:Schinzel} and~\ref{thm:235}, and also Hough's theorem.
In each case the deduction involves little more than choosing a suitable sequence $(\delta_1,\dots,\delta_n)$.

Given a finite collection $\cA = \{A_d : d\in D\}$ of arithmetic progressions, let $n$ be the number
of distinct prime factors of $Q = \lcm(D)$, and for each sequence $\delta_1,\dots,\delta_n \in [0,1/2]$,
let the probability distributions $\Pr_i$ and functions $\alpha_i \colon \Z_{Q_{i-1}} \to [0,1]$
be as defined in \eqref{def:alpha} and~\eqref{def:P_i}. Set
\[
 M_i^{(1)} := \E_{i-1}\big[\alpha_i(x)\big] \qquad \text{and} \qquad
 M_i^{(2)} := \E_{i-1}\big[\alpha_i(x)^2\big],
\]
and define a multiplicative function $\nu$, defined on factors of $Q$, by setting
\begin{equation}\label{def:nu}
 \nu(d) = \prod_{p_j\mid d} \frac{1}{1-\delta_j}.
\end{equation}
for each $d \mid Q$.

\begin{theorem}\label{thm:general}
 Let\/ $\cA = \{A_d : d\in D\}$ be a finite collection of arithmetic progressions, and let\/
 $\delta_1,\dots,\delta_n \in [0,1/2]$. If
 \begin{equation}\label{eq:eta}
  \eta := \sum_{i=1}^n \min\bigg\{ M_i^{(1)}, \frac{M_i^{(2)}}{4\delta_i(1-\delta_i)} \bigg\}  <  1,
 \end{equation}
 then\/ $\cA$ does not cover the integers. Moreover, the uncovered set\/ $R$ has density at least
 \begin{equation}\label{eq:density}
  \Pr_0(R) \ge \big(1-\eta\big) \exp\bigg(-\frac{2}{1-\eta}\sum_{d\in D}\frac{\nu(d)}{d}\bigg).
 \end{equation}
\end{theorem}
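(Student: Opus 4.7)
The plan has two main stages: first, show that the uncovered set $R$ has $\Pr_n$-measure at least $1-\eta$; then, transfer this to a lower bound on the $\Pr_0$-density of $R$ via a Jensen argument whose logarithmic cost is controlled by $\sum_d\nu(d)/d$.

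For the first stage, I would bound $\Pr_i(B_i)$ in two ways. The first bound $\Pr_i(B_i)\le M_i^{(1)}$ follows immediately from~\eqref{e:baddom} together with the identity $\Pr_{i-1}(B_i)=\E_{i-1}[\alpha_i]=M_i^{(1)}$. For the second, a direct calculation from~\eqref{def:P_i}---using that $\Pr_{i-1}$ is uniform on each fibre $F(x)$ and that exactly $\alpha_i(x)p_i^{\gamma_i}$ of its points lie in $B_i$---yields
\[
 \Pr_i(B_i)=\frac{1}{1-\delta_i}\,\E_{i-1}\!\big[\max\{0,\alpha_i(x)-\delta_i\}\big].
\]
The elementary identity $(\alpha_i(x)-2\delta_i)^2\ge 0$ rearranges to $\max\{0,\alpha_i(x)-\delta_i\}\le\alpha_i(x)^2/(4\delta_i)$, giving $\Pr_i(B_i)\le M_i^{(2)}/(4\delta_i(1-\delta_i))$. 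Since $B_i$ is $Q_i$-measurable, iterating Lemma~\ref{obs:Qmeas} gives $\Pr_n(B_i)=\Pr_i(B_i)$, and a union bound yields $\Pr_n(R)\ge 1-\eta>0$. Because $R$ is $Q$-periodic, its non-emptiness already proves the first assertion of the theorem.

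For the second stage, set $f(x):=\Pr_n(x)/\Pr_0(x)$; since for $x\in R\subseteq R_i$ each factor $\Pr_i(x)/\Pr_{i-1}(x)\ge 1$ by~\eqref{def:P_i}, we have $f\ge 1$ on $R$. Writing $\Pr_0(R)=\Pr_n(R)\cdot\E_n[1/f\mid R]$ and applying Jensen's inequality to $t\mapsto e^{-t}$ gives
\[
 \Pr_0(R)\ge\Pr_n(R)\exp\!\big({-}\E_n[\log f\mid R]\big).
\]
For $x\in R$ one has $\log(\Pr_i(x)/\Pr_{i-1}(x))=-\log(1-\min\{\alpha_i(x),\delta_i\})$, and since $\delta_i\le 1/2$ the argument lies in $[0,1/2]$; the bound $-\log(1-t)\le 2t$ then gives $\log f(x)\le 2\sum_i\min\{\alpha_i(x),\delta_i\}$. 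Bounding $\id_R\le 1$ and using $Q_{i-1}$-measurability of $\min\{\alpha_i,\delta_i\}$ gives $\Pr_n(R)\cdot\E_n[\log f\mid R]\le 2\sum_i\E_{i-1}[\alpha_i]=2\sum_i\Pr_{i-1}(B_i)$.

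The final ingredient is the pointwise estimate $\Pr_j(a+d\Z)\le\nu(d)/d$, valid for every $j\le n$, $a\in\Z$, and $d\mid Q_j$, which I would prove by induction on $j$: if $p_j\nmid d$ then $a+d\Z$ is $Q_{j-1}$-measurable, so the IH with Lemma~\ref{obs:Qmeas} does it; if $d=d'p_j^\beta$ with $d'\mid Q_{j-1}$ and $\beta\ge 1$, then uniformity of $\Pr_{j-1}$ on the fibres over $\Z_{Q_{j-1}}$ yields $\Pr_{j-1}(a+d\Z)=\Pr_{j-1}(a+d'\Z)/p_j^\beta$, and combining the IH with~\eqref{e:dom} closes the induction via $\nu(d)=\nu(d')/(1-\delta_j)$. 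Applied with $j=i-1$ this gives $\Pr_{i-1}(B_i)\le\sum_{d\in N_i}\nu(d)/d$, and summing over $i$ (since $\{N_i\}$ partitions $D$) yields $\sum_i\Pr_{i-1}(B_i)\le\sum_{d\in D}\nu(d)/d$, so that combining with $\Pr_n(R)\ge 1-\eta$ gives~\eqref{eq:density}. The main subtlety I expect is obtaining the product $\nu(d)$ indexed only by primes actually dividing $d$, rather than the larger product one would get from naive iteration of~\eqref{e:dom}; the key observation is that at each step $j'$ with $p_{j'}\nmid d$, Lemma~\ref{obs:Qmeas} preserves the measure of $a+d\Z$ exactly, so no factor $(1-\delta_{j'})^{-1}$ is incurred.
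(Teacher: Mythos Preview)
Your proposal is correct and follows essentially the same route as the paper: bound $\Pr_i(B_i)$ by the minimum of the first-moment and second-moment expressions (the paper's Lemma~\ref{lem:B_i}), use $Q_i$-measurability to get $\Pr_n(B_i)=\Pr_i(B_i)$ and hence $\Pr_n(R)\ge 1-\eta$, then apply Jensen to $\E_n[e^{-\log f}\mid R]$ and control $\E_n[\log f]$ via the AP estimate $\Pr_j(a+d\Z)\le\nu(d)/d$ (the paper's Lemmas~\ref{lem:AP} and~\ref{lem:distortion}). One small wrinkle: your AP estimate is stated only for $d\mid Q_j$, but when you apply it ``with $j=i-1$'' to $d\in N_i$ you have $d\nmid Q_{i-1}$; you need the extra (trivial) step of writing $d=mp_i^\beta$ with $m\mid Q_{i-1}$ and using the fibre-uniformity of $\Pr_{i-1}$ to get $\Pr_{i-1}(A_d)=\Pr_{i-1}(a_d+m\Z)/p_i^\beta\le\nu(m)/d\le\nu(d)/d$ --- exactly as the paper does by proving the bound for all $d\mid Q$ with $\nu(\gcd(d,Q_j))$ in place of $\nu(d)$.
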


In order to show that~\eqref{eq:eta} holds in our applications, we need to bound the moments of~$\alpha_i(x)$. The following technical theorem provides general bounds that are sufficient in most cases.

\begin{theorem}\label{prop:moments}
 Let\/ $\cA = \{A_d : d\in D\}$ be a finite collection of arithmetic progressions, and let\/
 $\delta_1,\dots,\delta_n \in [0,1/2]$. Then
 \[
  M_i^{(1)} \le \sum_{mp_i^j\in N_i,\,m\mid Q_{i-1}} p_i^{-j} \cdot \frac{\nu(m)}{m}
  \le \frac{1}{p_i-1} \prod_{j < i} \bigg( 1 + \frac{1}{(1-\delta_j)(p_j-1)} \bigg),
 \]
 and
 \[
  M_i^{(2)} \le \sum_{\substack{m_1p_i^{j_1}, \, m_2p_i^{j_2}\in N_i\\ m_1,m_2 \,\mid\, Q_{i-1}}}
  p_i^{-(j_1+j_2)} \cdot \frac{\nu\big( \lcm(m_1,m_2) \big)}{\lcm(m_1,m_2)}
  \le \frac{1}{(p_i-1)^2}\prod_{j < i} \bigg( 1 + \frac{3p_j-1}{(1-\delta_j)(p_j-1)^2} \bigg).
 \]
\end{theorem}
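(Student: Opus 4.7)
My plan is to reduce both bounds to a single \emph{distortion lemma}: for every $i\ge 0$ and every arithmetic progression $A\subseteq\Z_{Q_i}$ with modulus $m\mid Q_i$,
\[
 \Pr_i(A)\le\frac{\nu(m)}{m}.
\]
I would prove this by induction on $i$. The base case $i=0$ is immediate since $m=1$ and $\nu(1)=1$. For the inductive step, if $p_i\nmid m$, then $A$ is $Q_{i-1}$-measurable, so Lemma~\ref{obs:Qmeas} gives $\Pr_i(A)=\Pr_{i-1}(A)$ and the hypothesis applies directly. Otherwise $p_i^e\,\|\,m$ with $e\ge1$, and writing $m=m'p_i^e$ the Chinese Remainder Theorem factors $A=A'\times A''$, where $A'$ is an AP in $\Z_{Q_{i-1}}$ of modulus $m'\mid Q_{i-1}$ and $A''\subseteq\Z_{p_i^{\gamma_i}}$ has modulus $p_i^e$. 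Fibre-uniformity of the extension of $\Pr_{i-1}$ to $\Z_{Q_i}$ gives $\Pr_{i-1}(A)=\Pr_{i-1}(A')/p_i^e$; combining \eqref{e:dom} with the inductive bound $\Pr_{i-1}(A')\le\nu(m')/m'$ then yields exactly $\Pr_i(A)\le\nu(m)/m$, since $\nu(m)=\nu(m')/(1-\delta_i)$.

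With the distortion lemma in hand, both moments succumb to a union bound. Each $d\in N_i$ has the form $d=mp_i^j$ with $m\mid Q_{i-1}$ and $j\ge 1$, and the CRT decomposition $A_d=A_d^{(1)}\times A_d^{(2)}$ combined with fibre-uniformity gives $\Pr_{i-1}(A_d)=\Pr_{i-1}(A_d^{(1)})/p_i^j$. Since $\alpha_i(x)$ is the conditional average of $1_{B_i}$ on the fibre $F(x)$, the union bound $1_{B_i}\le\sum_{d\in N_i}1_{A_d}$ together with its square yields
\[
 \alpha_i(x)\le\sum_{d=mp_i^j\in N_i}\frac{1_{A_d^{(1)}}(x)}{p_i^j},\qquad
 \alpha_i(x)^2\le\sum_{\substack{d_s=m_s p_i^{j_s}\in N_i\\ s=1,2}}\frac{1_{A_{d_1}^{(1)}\cap A_{d_2}^{(1)}}(x)}{p_i^{j_1+j_2}}.
\]
Taking expectation under $\Pr_{i-1}$ and applying the distortion lemma, together with the CRT fact that a nonempty intersection of APs with moduli $m_1,m_2$ is an AP with modulus $\lcm(m_1,m_2)\mid Q_{i-1}$, delivers the first inequality in each line of the theorem.

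The closed-form upper bounds then follow by extending the sums to all $m\mid Q_{i-1}$ and all $j\ge 1$ and factoring as an Euler product over the primes $p_j$ with $j<i$. The geometric sum over $j$ contributes $1/(p_i-1)$ for the first moment and $1/(p_i-1)^2$ for the second. For each $j<i$, the first-moment local factor is
\[
 1+\sum_{e\ge1}\frac{\nu(p_j^e)}{p_j^e}\le 1+\frac{1}{(1-\delta_j)(p_j-1)}.
\]
For the second moment, I would group pairs $(e_1,e_2)$ with $0\le e_s\le\gamma_j$ by $e=\max(e_1,e_2)$, of which there are $2e+1$ when $e\ge 1$; the identity $\sum_{e\ge1}(2e+1)/p_j^e=(3p_j-1)/(p_j-1)^2$ then produces the local factor $1+(3p_j-1)/\bigl((1-\delta_j)(p_j-1)^2\bigr)$.

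The main substantive step is identifying the distortion lemma as the right induction hypothesis; once it is in place, everything reduces to union bounds, CRT decompositions, and the monotonicity properties of $\Pr_i$ recorded in Lemmas~\ref{obs:Qmeas} and~\ref{obs:probs}. The one bookkeeping subtlety to guard against is that $d\in N_i$ can contain arbitrary powers $p_i^j$ with $1\le j\le\gamma_i$, so both the $j$-summation and the decomposition of $A_d$ must accommodate this.
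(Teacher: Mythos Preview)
Your proposal is correct and follows essentially the same route as the paper: your ``distortion lemma'' is precisely Lemma~\ref{lem:AP} (proved there by the same induction using Lemmas~\ref{obs:Qmeas} and~\ref{obs:probs}), your union-bound argument for the moments is Lemma~\ref{lem:moments}, and your Euler-product computation with the $(2e+1)$-counting is Lemma~\ref{lem:sums}. The only cosmetic difference is that the paper states the distortion bound for arbitrary $d\mid Q$ as $\Pr_i(b+d\Z)\le\nu(\gcd(d,Q_i))/d$, whereas you restrict to $m\mid Q_i$; your version is exactly what the application requires.
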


The proofs of Theorems~\ref{thm:general} and~\ref{prop:moments} are both surprisingly simple. Let us begin with the following easy lemma, which is the first step in the proof of Theorem~\ref{thm:general}. We assume throughout this section that $\cA = \{A_d : d\in D\}$ is a given finite collection of arithmetic progressions such that $Q = \lcm(D)$ has exactly $n$ distinct prime factors, and fix a sequence $\delta_1,\dots,\delta_n \in [0,1/2]$, and hence a function~$\alpha_i$ and measure~$\Pr_i$ for each $i \in [n]$.

\begin{lemma}\label{lem:B_i}
 \[
  \Pr_i(B_i)\le\min\bigg\{\E_{i-1}\big[\alpha_i(x)\big],\
  \frac{\E_{i-1}\big[\alpha_i(x)^2 \big]}{4\delta_i(1-\delta_i)}\bigg\}.
 \]
\end{lemma}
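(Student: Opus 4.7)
The bound splits into two independent inequalities, and the strategy is to treat each one separately.

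For the first bound, $\Pr_i(B_i) \le \E_{i-1}[\alpha_i(x)]$, I would invoke inequality~\eqref{e:baddom} of Lemma~\ref{obs:probs}, which says $\Pr_i(B_i) \le \Pr_{i-1}(B_i)$ since $B_i \subseteq B_i$. Then, unpacking the definition of $\alpha_i(x)$ in~\eqref{def:alpha}, we have
\[
 \Pr_{i-1}(B_i) \;=\; \sum_{x\in\Z_{Q_{i-1}}} \Pr_{i-1}\bigl(F(x)\cap B_i\bigr) \;=\; \sum_{x} \alpha_i(x)\,\Pr_{i-1}(x) \;=\; \E_{i-1}\bigl[\alpha_i(x)\bigr],
\]
which is exactly what is needed.

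For the second (and main) bound, I would compute $\Pr_i(B_i)$ directly from the case split in~\eqref{def:P_i}. On fibres where $\alpha_i(x)\le\delta_i$, the new measure of $F(x)\cap B_i$ is zero, and on fibres where $\alpha_i(x)>\delta_i$ the scaling factor $(\alpha_i(x)-\delta_i)/(\alpha_i(x)(1-\delta_i))$ times $\Pr_{i-1}(F(x)\cap B_i)=\alpha_i(x)\Pr_{i-1}(x)$ collapses to
\[
 \Pr_i\bigl(F(x)\cap B_i\bigr) \;=\; \frac{\alpha_i(x)-\delta_i}{1-\delta_i}\,\Pr_{i-1}(x).
\]
Summing over $x$, this gives the clean identity $\Pr_i(B_i) = (1-\delta_i)^{-1}\E_{i-1}\bigl[(\alpha_i(x)-\delta_i)_+\bigr]$, where $(t)_+=\max\{t,0\}$.

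The key analytic ingredient is then the elementary pointwise inequality
\[
 (t-\delta_i)_+ \;\le\; \frac{t^{2}}{4\delta_i} \qquad\text{for all } t\ge 0,
\]
which follows by rearranging to $(t-2\delta_i)^2 \ge 0$ (trivial when $t<\delta_i$). Applying this with $t=\alpha_i(x)$ and taking expectations under $\Pr_{i-1}$ yields
\[
 \Pr_i(B_i) \;\le\; \frac{1}{1-\delta_i}\cdot \frac{\E_{i-1}\bigl[\alpha_i(x)^{2}\bigr]}{4\delta_i},
\]
which is the second half of the claimed minimum. There is really no obstacle here; the only step that requires any thought is spotting the $(t-\delta_i)_+\le t^{2}/(4\delta_i)$ inequality, which is precisely tuned so that equality occurs at $t=2\delta_i$ — the value at which the definition~\eqref{def:P_i} of $\Pr_i$ transitions from heavy to mild distortion.
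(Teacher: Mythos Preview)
Your proof is correct and is essentially identical to the paper's: both invoke \eqref{e:baddom} for the first bound, compute $\Pr_i(B_i)=(1-\delta_i)^{-1}\E_{i-1}[(\alpha_i(x)-\delta_i)_+]$ from the definition~\eqref{def:P_i}, and then apply the pointwise inequality $(t-\delta_i)_+\le t^2/(4\delta_i)$. One small slip in your closing commentary: the transition in~\eqref{def:P_i} occurs at $\alpha_i(x)=\delta_i$, not at $2\delta_i$ (the latter is only where the quadratic bound is tight).
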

\begin{proof}
Observe first that
\[
 \Pr_i(B_i) \le \Pr_{i-1}(B_i) = \E_{i-1}\big[\alpha_i(x)\big],
\]
where the inequality holds by~\eqref{e:baddom}, and the equality by~\eqref{def:alpha}.

For the other upper bound, we will use the elementary inequality $\max\{a-d,0\} \le a^2 / 4d$,
which is easily seen to hold for all $a,d > 0$ by rearranging the inequality $(a-2d)^2 \ge 0$.
By \eqref{def:alpha} and~\eqref{def:P_i} (the definitions of $\alpha_i$ and~$\Pr_i$), we have
\begin{align*}
 \Pr_i(B_i)&=\sum_{x \in \Z_{Q_{i-1}}}
  \max\bigg\{ 0, \, \frac{\alpha_i(x) - \delta_i}{\alpha_i(x)( 1 - \delta_i )} \bigg\} \cdot \Pr_{i-1}\big(F(x) \cap B_i \big)\\
 &=\frac{1}{1-\delta_i} \sum_{x \in \Z_{Q_{i-1}}} \max\big\{ 0, \, \alpha_i(x)-\delta_i \big\} \cdot \Pr_{i-1}(x) \\
 &\le \frac{1}{1-\delta_i}\sum_{x \in \Z_{Q_{i-1}}} \frac{\alpha_i(x)^2}{4\delta_i} \cdot \Pr_{i-1}(x)
  = \frac{\E_{i-1}\big[ \alpha_i(x)^2 \big]}{4\delta_i(1-\delta_i)},
\end{align*}
as required.
\end{proof}

It is already straightforward to deduce from Lemma~\ref{lem:B_i} and inequality~\eqref{eq:eta} that $\cA$
does not cover the integers. In order to deduce the bound~\eqref{eq:density} on the density of the uncovered set,
we will need to work slightly harder. First, we need the following easy bound on the $\Pr_i$-measure of an
arithmetic progression.

\begin{lemma}\label{lem:AP}
 For each\/ $0 \le i \le n$, and all\/ $b,d\in\Z$ such that\/ $d\mid Q$, we have
 \begin{equation}\label{e:APdist}
 \Pr_i\big(b+d\Z\big) \le \frac{1}{d} \prod_{p_j\mid d,\,j\le i}\frac{1}{1-\delta_j}
 = \frac{\nu\big(\gcd(d,Q_i)\big)}{d}.
\end{equation}
\end{lemma}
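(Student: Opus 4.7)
The plan is to prove the bound by induction on $i$, leveraging the product structure of $\Pr_i$ together with the two basic properties recorded in Lemmas~\ref{obs:Qmeas} and~\ref{obs:probs}.

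The first step is to reduce to the case $d \mid Q_i$. Writing $d = ef$ with $e = \gcd(d,Q_i)$ and $f = d/e$, we have $\gcd(f,Q_i)=1$, so $f \mid Q/Q_i$. Under the CRT identification $\Z_Q \cong \Z_{Q_i} \times \Z_{Q/Q_i}$, the class $b + d\Z$ corresponds to a product $(b+e\Z_{Q_i}) \times (b+f\Z_{Q/Q_i})$. Since the extension of $\Pr_i$ from $\Z_{Q_i}$ to $\Z_Q$ is the product of $\Pr_i$ on $\Z_{Q_i}$ with the uniform measure on $\Z_{Q/Q_i}$, this yields $\Pr_i(b+d\Z) = \Pr_i(b+e\Z_{Q_i})/f$, so it suffices to prove $\Pr_i(b+e\Z_{Q_i}) \le \nu(e)/e$ for every $e \mid Q_i$.

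Now induct on $i$; the base case $i=0$ is immediate since $\Pr_0$ is uniform and $\nu(1)=1$. For the inductive step, write $e = p_i^{a} e^{*}$ with $a \ge 0$ and $e^{*} \mid Q_{i-1}$. If $a = 0$, then $b+e\Z_{Q_i}$ is already $Q_{i-1}$-measurable, so Lemma~\ref{obs:Qmeas} gives $\Pr_i(b+e\Z_{Q_i}) = \Pr_{i-1}(b+e\Z_{Q_{i-1}})$, to which the induction hypothesis applies directly. If $a \ge 1$, Lemma~\ref{obs:probs} supplies a factor of $(1-\delta_i)^{-1} = \nu(p_i^{a})$, after which the analogous product decomposition $\Z_Q \cong \Z_{Q_{i-1}} \times \Z_{Q/Q_{i-1}}$ (with $\Pr_{i-1}$ uniform on the second factor) extracts the factor $1/p_i^{a}$ and reduces the bound to the induction hypothesis applied to $e^{*} \mid Q_{i-1}$. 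Multiplicativity of $\nu$ then assembles the two pieces into the claimed bound $\nu(e)/e$.

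The only real subtlety here is the bookkeeping of which cyclic group each measure lives on, and how CRT interacts with the product extensions of $\Pr_{i-1}$ and $\Pr_i$; once this setup is in place there is no substantive obstacle, as no genuine estimate beyond Lemmas~\ref{obs:Qmeas} and~\ref{obs:probs} is required.
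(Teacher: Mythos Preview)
Your proof is correct and follows essentially the same approach as the paper: induction on $i$, using Lemma~\ref{obs:probs} when $p_i \mid d$ and Lemma~\ref{obs:Qmeas} (plus the product/CRT structure) when $p_i \nmid d$. The only organizational difference is that you first reduce to $d \mid Q_i$ and then carry only such $e$ through the induction (so you must peel off $p_i^a$ explicitly), whereas the paper keeps a general $d \mid Q$ in the inductive statement and applies the hypothesis directly to $\Pr_{i-1}(b+d\Z)$ without splitting; the substance is identical.
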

\begin{proof}
The proof is by induction on~$i$. Note first that $\Pr_0$ is just the uniform measure,
so $\Pr_0(b+d\Z) = 1/d$. So let $i \in [n]$, and assume that the claimed bound holds
for~$\Pr_{i-1}$. Suppose first that $p_i \mid d$. Then, by~\eqref{e:dom} and the induction
hypothesis, we have
\[
 \Pr_i\big( b + d\Z \big) \le \frac{1}{1-\delta_i} \cdot \Pr_{i-1}\big( b + d\Z \big)
 \le \frac{1}{d} \cdot \frac{1}{1-\delta_i} \prod_{p_j \mid d, \, j < i} \frac{1}{1-\delta_j}
 =\frac{1}{d} \prod_{p_j \mid d, \, j \le i} \frac{1}{1-\delta_j},
\]
as required. On the other hand, if $p_i \nmid d$ then we may write $d = m\ell$, where
$m = \gcd(d,Q_i) = \gcd(d,Q_{i-1})$. It follows that
\[
 \Pr_i\big(b + d\Z \big) = \frac{\Pr_i\big( b + m\Z \big)}{\ell}
 = \frac{\Pr_{i-1}\big( b + m\Z \big)}{\ell}
 \le \frac{1}{\ell m} \prod_{p_j \mid d, \, j < i} \frac{1}{1-\delta_j}
 = \frac{1}{d} \prod_{p_j \mid d, \, j \le i} \frac{1}{1-\delta_j},
\]
as required.
\end{proof}

Let us define the \emph{distortion\/} $\Delta_i(x)$ of a point $x \in \Z_{Q_i}$ by
\begin{equation}\label{def:distortion}
 \Delta_i(x) := \max\bigg\{ 0, \, \log \frac{\Pr_i(x)}{\Pr_0(x)} \bigg\}.
\end{equation}
The following bound on the average distortion will allow us to prove~\eqref{eq:density}.

\begin{lemma}\label{lem:distortion}
 For each\/ $0 \le i \le n$, we have
 \[
  \E_i\big[ \Delta_i(x) \big]  \le 2 \cdot \sum_{d \in D_i} \frac{\nu(d)}{d}.
 \]
\end{lemma}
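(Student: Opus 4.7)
The approach is to prove this by induction on $i$, with the base case $i = 0$ immediate: $\Pr_0 = \Pr_0$ gives $\Delta_0 \equiv 0$, and $D_0 = \emptyset$. The inductive step reduces to establishing the per-step bound
\[
 \E_i\big[\Delta_i(x)\big] - \E_{i-1}\big[\Delta_{i-1}(x)\big] \le 2 \sum_{d \in N_i} \frac{\nu(d)}{d},
\]
which combined with the inductive hypothesis and $D_i = D_{i-1} \sqcup N_i$ yields the result.

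To bound the increment, I would write $\tilde{\Delta}_j(z) := \log(\Pr_j(z)/\Pr_0(z))$, so $\Delta_j = \max\{0, \tilde{\Delta}_j\}$, and apply the elementary inequality $\max\{0,a\} - \max\{0,b\} \le \max\{0, a-b\}$ to obtain pointwise
\[
 \Delta_i(z) - \Delta_{i-1}(z) \le \max\big\{0,\, \log(\Pr_i(z)/\Pr_{i-1}(z))\big\}.
\]
From~\eqref{def:P_i}, the ratio $\Pr_i/\Pr_{i-1}$ is at most $1$ on $B_i$, while on the complement $B_i^c$ it equals $1/(1 - \min\{\alpha_i(x),\delta_i\})$. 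Since $\min\{\alpha_i(x),\delta_i\} \le \delta_i \le 1/2$, the elementary bound $-\log(1-t) \le 2t$ for $t \in [0, 1/2]$ (the source of the factor~$2$ in the statement) gives
\[
 \Delta_i(z) - \Delta_{i-1}(z) \le 2\alpha_i(x) \cdot \id\{z \notin B_i\} \le 2\alpha_i(x).
\]
The truncation to the positive part is crucial here: at points deleted at an earlier stage $\Pr_j$ can vanish, so $\tilde{\Delta}_j$ may be $-\infty$ and its expectation is ill-behaved; working with $(\tilde{\Delta}_i)^+$ reduces the entire analysis to a clean pointwise estimate on $\log(\Pr_i/\Pr_{i-1})$, which by the design of the sieve is controlled precisely by $\alpha_i(x)$.

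Since $\alpha_i$ depends only on $x \in \Z_{Q_{i-1}}$, it is $Q_{i-1}$-measurable, so Lemma~\ref{obs:Qmeas} yields both $\E_i[\alpha_i] = \E_{i-1}[\alpha_i] = M_i^{(1)}$ and $\E_i[\Delta_{i-1}] = \E_{i-1}[\Delta_{i-1}]$. Taking $\Pr_i$-expectations of the pointwise inequality above therefore gives $\E_i[\Delta_i] - \E_{i-1}[\Delta_{i-1}] \le 2 M_i^{(1)}$. Finally, the first-moment bound in Theorem~\ref{prop:moments} reads $M_i^{(1)} \le \sum_{mp_i^k \in N_i,\ m \mid Q_{i-1}} \nu(m)/(mp_i^k)$, and the identity $\nu(mp_i^k) = \nu(m)/(1-\delta_i)$ rewrites this upper bound as $(1-\delta_i)\sum_{d \in N_i} \nu(d)/d \le \sum_{d \in N_i}\nu(d)/d$, which completes the inductive step.
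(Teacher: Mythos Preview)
Your proof is correct and follows essentially the same approach as the paper: both establish the pointwise bound $\max\{0,\log(\Pr_j/\Pr_{j-1})\}\le 2\alpha_j(x)$ via $-\log(1-t)\le 2t$, then use $Q_{j-1}$-measurability to pass to $\E_{j-1}[\alpha_j]$ and bound this first moment by $\sum_{d\in N_j}\nu(d)/d$. The only cosmetic differences are that the paper telescopes $\Delta_i$ into a sum over $j\le i$ and takes one $\E_i$-expectation (rather than inducting on $i$), and it bounds $\E_{j-1}[\alpha_j]=\Pr_{j-1}(B_j)$ directly via the union bound and Lemma~\ref{lem:AP} instead of citing Theorem~\ref{prop:moments}.
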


\begin{proof}
We claim first that
\[
 \log \frac{\Pr_j(x)}{\Pr_{j-1}(x)} \le 2 \cdot \alpha_j(x)
\]
for every $j \in [n]$ and $x \in \Z_{Q_j}$. Indeed, observe that
$\Pr_{j-1}(x) / \Pr_j(x) \ge \max\{ 1 - \alpha_j(x),  1 - \delta_j \}$, by~\eqref{def:P_i},
and use the inequality $-\log(1-z) \le 2z$, which holds for $z \le 1/2$, and the fact that
$\delta_j \le 1/2$. It follows that
\[
 \E_i\big[ \Delta_i(x) \big]
 \le \sum_{j=1}^i \E_i \bigg[ \max\bigg\{ 0, \, \log \frac{\Pr_j(x)}{\Pr_{j-1}(x)} \bigg\} \bigg]
 \le 2 \cdot \sum_{j=1}^i \E_i\big[ \alpha_j(x)\big].
\]
Now, by~\eqref{def:alpha} and Lemma~\ref{obs:Qmeas}, we have
\[
 \E_i\big[ \alpha_j(x) \big] = \E_{j-1}\big[ \alpha_j(x) \big] = \Pr_{j-1}(B_j)
\]
for each $j \in [i]$, since the function $\alpha_j$ is $Q_{j-1}$-measurable.
Moreover, by~\eqref{def:B_i} (the definition  of $B_i$), the union bound, and Lemma~\ref{lem:AP},
we have
\[
 \Pr_{j-1}(B_j) \le \sum_{d \in N_j} \Pr_{j-1}(A_d)
 \le \sum_{d\in N_j}\frac{\nu(\gcd(d,Q_{j-1}))}{d}
 \le \sum_{d\in N_j}\frac{\nu(d)}{d}.
\]
Hence we obtain
\[
 \E_i\big[ \Delta_i(x) \big] \le 2 \cdot \sum_{j = 1}^i \sum_{d \in N_j} \frac{\nu(d)}{d}
 = 2 \cdot \sum_{d \in D_i} \frac{\nu(d)}{d},
\]
as claimed.
\end{proof}

Theorem~\ref{thm:general} now follows easily from Lemmas~\ref{lem:B_i} and~\ref{lem:distortion}.

\begin{proof}[Proof of Theorem~\ref{thm:general}]
We claim first that
\begin{equation}\label{gen:proof:first:step}
 1 - \Pr_n(R) \le \sum_{i=1}^n \Pr_n(B_i) = \sum_{i=1}^n \Pr_i(B_i) \le \eta.
\end{equation}
Indeed, the first inequality is just the union bound; the equality holds by Lemma~\ref{obs:Qmeas},
since $B_i$ is $Q_i$-measurable, so $\Pr_i(B_i) = \Pr_{i+1}(B_i) = \dots = \Pr_n(B_i)$;
and the final inequality follows from Lemma~\ref{lem:B_i} and~\eqref{eq:eta}, the definition of~$\eta$.
It follows that $\Pr_n(R) \ge 1 - \eta > 0$ if $\eta<1$, and hence $R$ is non-empty, i.e., $\cA$ does
not cover the integers.

To prove the claimed lower bound on the density of the uncovered set, we will use Lemma~\ref{lem:distortion}.
Indeed, by the definition~\eqref{def:distortion} of $\Delta_n(x)$, we have
\[
 \Pr_0(R) = \E_0\big[\id_{x\in R}\big] \ge \E_n\big[\id_{x \in R}\exp\big(-\Delta_n(x)\big)\big].
\]
Now, by the convexity of $e^{-z}$, and noting that
$\Pr_n(R) \cdot \E_n\big[ \Delta_n(x) \mid x \in R \big] \le \E_n\big[ \Delta_n(x) \big]$,
\begin{align*}
 \E_n\big[\id_{x \in R} \exp\big(-\Delta_n(x) \big) \big]
 &= \Pr_n(R) \cdot \E_n\big[\exp\big(-\Delta_n(x) \big) \bmid x \in R \big] \\[+1ex]
 &\ge \Pr_n(R) \cdot \exp\big(-\E_n\big[ \Delta_n(x) \mid x \in R \big] \big)\\
 &\ge \Pr_n(R) \cdot \exp\bigg(-\frac{\E_n\big[\Delta_n(x)\big]}{\Pr_n(R)} \bigg).
\end{align*}
Hence, by Lemma~\ref{lem:distortion}, and since $\Pr_n(R) \ge 1 - \eta$, by~\eqref{gen:proof:first:step}, we obtain
\[
 \Pr_0(R) \ge \big(1-\eta\big) \exp\bigg(-\frac{2}{1-\eta}\sum_{d \in D}\frac{\nu(d)}{d}\bigg),
\]
as required.
\end{proof}

\subsection{Bounding the moments of $\alpha_i(x)$}

The proof of Theorem~\ref{prop:moments} is also quite straightforward. First, recall
that $N_i := D_i\setminus D_{i-1}$ is the set of new differences at step~$i$, and note that
any $d \in N_i$ can be represented in the form $d = mp_i^j$, where $m \mid Q_{i-1}$ and
$1 \le j \le \gamma_i$. 
The first step is the following general bound on the moments of $\alpha_i(x)$.

\begin{lemma}\label{lem:moments}
For each\/ $k \in \N$,
\begin{align}
\E_{i-1}\big[ \alpha_i(x)^k \big] & \le \sum_{\substack{m_1p_i^{j_1},\dots,m_kp_i^{j_k} \in N_i \\ m_1,\dots,m_k \,\mid\, Q_{i-1}}} \frac{1}{p_i^{j_1+\dots+j_k}} \cdot \frac{\nu\big(\lcm(m_1,\dots,m_k)\big)}{\lcm(m_1,\dots,m_k)} \label{eq:moments:one}\\
& \le \frac{1}{(p_i - 1)^k} \sum_{m_1,\dots,m_k \,\mid\, Q_{i-1}} \frac{ \nu\big( \lcm(m_1,\dots,m_k) \big)}{\lcm(m_1,\dots,m_k)}.\label{eq:moments:two}
\end{align}
\end{lemma}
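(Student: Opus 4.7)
The plan is to reduce the problem to a union bound on the fibre, expand the $k$-th power, apply the Chinese Remainder Theorem to identify the resulting intersection of congruence conditions with a single arithmetic progression, and then invoke Lemma~\ref{lem:AP} to bound the $\Pr_{i-1}$-measure of each such progression.

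First I would fix $x \in \Z_{Q_{i-1}}$ and, for each $d \in N_i$, write $d = mp_i^j$ with $m \mid Q_{i-1}$ and $1 \le j \le \gamma_i$. Viewing $\Z_{Q_i}$ as $\Z_{Q_{i-1}} \times \Z_{p_i^{\gamma_i}}$ via CRT (using $\gcd(m, p_i^{\gamma_i}) = 1$), the condition $(x,y) \in A_d$ decouples into $x \equiv a_d \pmod{m}$ and $y \equiv a_d \pmod{p_i^j}$. The latter has exactly $p_i^{\gamma_i - j}$ solutions $y \in \Z_{p_i^{\gamma_i}}$, so
\[
 \frac{|F(x) \cap A_d|}{p_i^{\gamma_i}} = \frac{1}{p_i^j} \cdot \id_{x \,\equiv\, a_d \;(\bmod\; m)}.
\]
By the union bound applied to $B_i = \bigcup_{d \in N_i} A_d$, this gives
\[
 \alpha_i(x) \le \sum_{mp_i^j \in N_i,\, m \mid Q_{i-1}} \frac{1}{p_i^j} \cdot \id_{x \,\equiv\, a_{mp_i^j} \;(\bmod\; m)}.
\]

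Next I would raise both sides to the $k$-th power and expand, obtaining a sum indexed by $k$-tuples $(m_s p_i^{j_s})_{s=1}^k$ in $N_i$. For each such tuple, the product of indicators $\prod_s \id_{x \equiv a_{d_s} \!\!\pmod{m_s}}$ is the indicator of a single congruence condition modulo $\lcm(m_1,\dots,m_k)$ (or is identically zero if the conditions are incompatible). Taking expectations under $\Pr_{i-1}$, and applying Lemma~\ref{lem:AP} at step $i-1$ to a divisor of $Q_{i-1}$ (so the $\gcd$ in \eqref{e:APdist} equals $\lcm(m_1,\dots,m_k)$ itself), each such expectation is bounded by $\nu(\lcm(m_1,\dots,m_k)) / \lcm(m_1,\dots,m_k)$. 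This yields \eqref{eq:moments:one}.

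To pass from \eqref{eq:moments:one} to \eqref{eq:moments:two}, I would relax the constraint $m_s p_i^{j_s} \in N_i$ to $m_s \mid Q_{i-1}$ and $j_s \ge 1$ (the terms only grow, since all summands are nonnegative), then separate the sums over $j_s$ and $m_s$. The geometric series
\[
 \sum_{j \ge 1} p_i^{-j} = \frac{1}{p_i - 1}
\]
contributes a factor $(p_i - 1)^{-k}$, leaving the advertised sum over $m_s \mid Q_{i-1}$. No step here is really an obstacle; the only point that needs a moment's care is the CRT identification together with the observation that $\lcm(m_1,\dots,m_k)$ divides $Q_{i-1}$ (so that Lemma~\ref{lem:AP} gives the clean bound without the $\gcd$), and that we are free to ignore cross-terms and incompatible tuples, since all quantities in sight are nonnegative.
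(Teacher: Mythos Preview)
Your proposal is correct and follows essentially the same approach as the paper: union bound on the fibre to obtain $\alpha_i(x)\le\sum_{mp_i^j\in N_i}p_i^{-j}\id[x\equiv a_d\bmod m]$, expansion of the $k$th power, CRT to reduce the product of indicators to (at most) a single congruence modulo $\lcm(m_1,\dots,m_k)$, and then Lemma~\ref{lem:AP} to bound its $\Pr_{i-1}$-measure; the passage to \eqref{eq:moments:two} via relaxing the constraints and summing the geometric series is also identical.
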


\begin{proof}
Recall the definitions~\eqref{def:B_i} and~\eqref{def:alpha} of the set $B_i$ and the
function~$\alpha_i$, respectively. Applying the union bound, we obtain, for each $x \in \Z_{Q_{i-1}}$,
\[
 \alpha_i(x) \, = \sum_{(x,y) \in \Z_{Q_i}} p_i^{-\gamma_i} \cdot \id\big[ (x,y) \in B_i \big]
 \le \sum_{(x,y) \in \Z_{Q_i}} \sum_{d \in N_i} p_i^{-\gamma_i} \cdot \id\big[ (x,y) \in A_d \big].
\]
Now, observe that if $d = mp_i^j \in N_i$, where $p_i \nmid m$, then there are either zero or
$p^{\gamma_i - j}$ values of $y \in \Z_{p_i^{\gamma_i}}$ with $(x,y) \in A_d$. Indeed,
$(x,y) \in A_d = a_d + d\Z$ iff $x \equiv a_d \bmod m$ and $y\equiv a_d \bmod p_i^j$.
It follows that
\[
 \alpha_i(x) \le \sum_{d=mp_i^j\in N_i} p_i^{-j} \cdot \id\big[ x \equiv a_d \bmod m \big],
\]
and hence
\[
 \E_{i-1}\big[ \alpha_i(x)^k \big] \le \sum_{d_1 = m_1p_i^{j_1} \in N_i}\cdots
 \sum_{d_k = m_kp_i^{j_k} \in N_i} \frac{1}{p_i^{j_1+\dots+j_k}}
 \Pr_{i-1}\big( x \equiv a_{d_j} \bmod {m_j}\text{ for }j\in[k]\big).
\]
Now, note that the intersection of the events $\{ x \equiv a_{d_j} \bmod m_j \}$ over $j \in [k]$
is either empty (if the congruences are incompatible), or is equivalent to
$x \equiv b \bmod\lcm(m_1,\dots,m_k)$ for some~$b$. Therefore, by Lemma~\ref{lem:AP}, we have
\[
 \Pr_{i-1}\big( x \equiv a_{d_j} \bmod {m_j}\text{ for }j\in[k] \big)
 \le \frac{ \nu\big( \lcm(m_1,\dots,m_k) \big)}{\lcm(m_1,\dots,m_k)},
\]
and hence
\begin{align*}
\E_{i-1}\big[ \alpha_i(x)^k \big] & \le \sum_{\substack{m_1p_i^{j_1},\dots,m_kp_i^{j_k} \in N_i \\ m_1,\dots,m_k \,\mid\, Q_{i-1}}} \frac{1}{p_i^{j_1+\dots+j_k}} \cdot \frac{ \nu\big( \lcm(m_1,\dots,m_k) \big)}{\lcm(m_1,\dots,m_k)}\\
& \le \sum_{j_1,\dots,j_k\ge 1} \frac{1}{p_i^{j_1+\dots+j_k}} \sum_{m_1,\dots, m_k \,\mid\, Q_{i-1}} \frac{ \nu\big( \lcm(m_1,\dots,m_k) \big)}{\lcm(m_1,\dots,m_k)}.
\end{align*}
This proves~\eqref{eq:moments:one}; to obtain~\eqref{eq:moments:two}, simply note that
\[
 \sum_{j_1,\dots,j_k\ge1}\frac{1}{p_i^{j_1+\dots+j_k}}
 = \bigg(\sum_{j\ge1}\frac{1}{p_i^j}\bigg)^k
 = \frac{1}{(p_i-1)^k}.\qedhere
\]
\end{proof}

To complete the proof of Theorem~\ref{prop:moments}, it only remains to prove the following bounds.

\begin{lemma}\label{lem:sums}
 For each\/ $i \in [n]$,
 \[
\sum_{m \mid Q_{i-1}} \frac{\nu(m)}{m} \le \, \prod_{j < i} \bigg( 1 + \frac{1}{(1-\delta_j)(p_j-1)} \bigg)
 \]
 and
 \[
\sum_{m_1,m_2 \mid Q_{i-1}} \frac{\nu\big( \lcm(m_1,m_2) \big)}{\lcm(m_1,m_2)} \le \, \prod_{j < i} \bigg( 1 + \frac{3p_j-1}{(1-\delta_j)(p_j-1)^2} \bigg).
 \]
\end{lemma}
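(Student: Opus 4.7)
The plan is to exploit multiplicativity: both $\nu(m)/m$ and, more importantly, the two-variable function $(m_1,m_2)\mapsto\nu(\lcm(m_1,m_2))/\lcm(m_1,m_2)$ split as products over the primes dividing $Q_{i-1}$. Indeed, $\nu$ is multiplicative by its definition \eqref{def:nu}, the map $m\mapsto 1/m$ is trivially multiplicative, and $\lcm$ respects the prime-by-prime factorization. Writing $Q_{i-1}=\prod_{j<i}p_j^{\gamma_j}$, a divisor $m\mid Q_{i-1}$ corresponds bijectively to a tuple $(k_j)_{j<i}$ with $0\le k_j\le\gamma_j$, and analogously for a pair $(m_1,m_2)$.

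For the first bound I would factor
\[
 \sum_{m\mid Q_{i-1}}\frac{\nu(m)}{m}=\prod_{j<i}\sum_{k=0}^{\gamma_j}\frac{\nu(p_j^k)}{p_j^k}.
\]
Since $\nu(p_j^0)=1$ and $\nu(p_j^k)=1/(1-\delta_j)$ for all $k\ge1$, the local factor equals
\[
 1+\frac{1}{1-\delta_j}\sum_{k=1}^{\gamma_j}\frac{1}{p_j^k}\le 1+\frac{1}{(1-\delta_j)(p_j-1)},
\]
which yields the first inequality immediately.

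For the second bound the same factorization gives
\[
 \sum_{m_1,m_2\mid Q_{i-1}}\frac{\nu\big(\lcm(m_1,m_2)\big)}{\lcm(m_1,m_2)}=\prod_{j<i}\sum_{0\le k_1,k_2\le\gamma_j}\frac{\nu\big(p_j^{\max(k_1,k_2)}\big)}{p_j^{\max(k_1,k_2)}}.
\]
Grouping the inner pairs by the value $s=\max(k_1,k_2)$ and noting that there are exactly $2s+1$ pairs with prescribed maximum $s$, the local factor is $1+\frac{1}{1-\delta_j}\sum_{s=1}^{\gamma_j}\frac{2s+1}{p_j^s}$. Using the standard identities $\sum_{s\ge1}p_j^{-s}=1/(p_j-1)$ and $\sum_{s\ge1}sp_j^{-s}=p_j/(p_j-1)^2$, a direct computation gives
\[
 \sum_{s\ge1}\frac{2s+1}{p_j^s}=\frac{2p_j}{(p_j-1)^2}+\frac{1}{p_j-1}=\frac{3p_j-1}{(p_j-1)^2},
\]
and truncating at $\gamma_j$ only decreases the sum, yielding the claimed product bound.

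The main technical point is the combinatorial identity $|\{(k_1,k_2):\max=s\}|=2s+1$ together with the closed-form evaluation of $\sum(2s+1)p^{-s}$; beyond that the proof is routine multiplicative bookkeeping, so I anticipate no real obstacle.
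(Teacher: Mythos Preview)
Your proof is correct and follows essentially the same route as the paper: both exploit multiplicativity to factor the sums into local factors over primes $p_j\mid Q_{i-1}$, group the pairs $(k_1,k_2)$ by $s=\max(k_1,k_2)$ to pick up the weight $2s+1$, and then evaluate $\sum_{s\ge1}(2s+1)/p_j^s=(3p_j-1)/(p_j-1)^2$. The only cosmetic difference is that the paper packages the counting via an auxiliary multiplicative function $\chi(m)=|\{(m_1,m_2):\lcm(m_1,m_2)=m\}|$ before factoring, whereas you factor the two-variable sum directly; the content is identical.
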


\begin{proof}
Recall from~\eqref{def:nu} that $\nu$ is a multiplicative function. It follows that
\[
\sum_{m \mid Q_{i-1}} \frac{\nu(m)}{m} = \, \prod_{j < i} \sum_{t = 0}^{\gamma_j} \frac{\nu(p_j^t)}{p_j^t},
\]
and by~\eqref{def:nu}, we have
\[
 \prod_{j < i} \sum_{t = 0}^{\gamma_j} \frac{\nu(p_j^t)}{p_j^t}
 = \, \prod_{j < i} \bigg( 1 + \frac{1}{1-\delta_j} \sum_{t = 1}^{\gamma_j} \frac{1}{p_j^t} \bigg)
 \le \, \prod_{j < i} \bigg( 1 + \frac{1}{(1-\delta_j)(p_j-1)} \bigg).
\]

\pagebreak
To prove the second inequality, let us write $\chi(m)$ for the number of ways of representing
a number $m > 1$ as the least common multiple of two numbers, i.e.,
\[
 \chi(m) :=  \big| \big\{ (m_1,m_2) : \lcm(m_1,m_2) = m \big\}\big|,
\]
so that
\[
\sum_{m_1,m_2\mid Q_{i-1}} \frac{\nu\big( \lcm(m_1,m_2) \big)}{\lcm(m_1,m_2)} = \sum_{m \mid Q_{i-1}} \frac{\chi(m)\nu(m)}{m}.
\]
Observe that the function $\chi$ is multiplicative and satisfies $\chi(p^t) = 2t+1$ for all primes $p$
and integers $t \ge 0$. It follows that
\begin{equation}\label{eq:sum:of:powers}
 \sum_{m \mid Q_{i-1}} \frac{\chi(m)\nu(m)}{m}
 = \, \prod_{j < i} \sum_{t = 0}^{\gamma_i} \frac{\chi(p_j^t) \nu(p_j^t)}{p_j^t}
 \le \, \prod_{j < i} \bigg( 1 + \sum_{t  = 1}^\infty \frac{2t+1}{(1 - \delta_j)p_j^t} \bigg).
\end{equation}
Finally, note that for any $p > 1$, we have
\begin{equation}\label{eq:sum:odds:over:powers}
 \sum_{t \ge 1}\frac{2t+1}{p^t}
 = \frac{1}{(1-p^{-1})} \bigg( \frac{3}{p} + \frac{2}{p^2} + \frac{2}{p^3} + \dots \bigg)
 = \frac{1}{(1-p^{-1})^2} \bigg( \frac{3}{p} - \frac{1}{p^2} \bigg) = \frac{3p-1}{(p-1)^2}.
\end{equation}
This completes the proof of the lemma.
\end{proof}

Theorem~\ref{prop:moments} is an almost immediate consequence of Lemmas~\ref{lem:moments} and~\ref{lem:sums}.

\begin{proof}[Proof of Theorem~\ref{prop:moments}]
To bound $M_i^{(1)}$ we apply Lemma~\ref{lem:moments} with $k = 1$ and the first inequality in Lemma~\ref{lem:sums}. This gives
$$\E_{i-1}\big[ \alpha_i(x) \big] \le \sum_{\substack{m p_i^{j} \in N_i \\ m \mid Q_{i-1}}} \frac{1}{p_i^{j}} \cdot \frac{\nu(m)}{m} \le \frac{1}{p_i - 1} \sum_{m \mid Q_{i-1}}
\frac{ \nu(m)}{m} \le \frac{1}{p_i - 1} \prod_{j < i} \bigg( 1 + \frac{1}{(1-\delta_j)(p_j-1)} \bigg),$$
as claimed. To bound $M_i^{(2)}$ we apply Lemma~\ref{lem:moments} with $k = 2$ and the second inequality in Lemma~\ref{lem:sums}. We obtain
\begin{multline*}
\E_{i-1}\big[ \alpha_i(x)^2 \big] \le \sum_{\substack{m_1p_i^{j_1}, \, m_2p_i^{j_2}\in N_i\\ m_1,m_2 \mid Q_{i-1}}} p_i^{-(j_1+j_2)} \cdot \frac{\nu\big(\lcm(m_1,m_2)\big)}{\lcm(m_1,m_2)} \\
\le \frac{1}{(p_i - 1)^2} \sum_{m_1,m_2 \mid Q_{i-1}}
\frac{ \nu\big( \lcm(m_1,m_2) \big)}{\lcm(m_1,m_2)} \le \frac{1}{(p_i - 1)^2} \prod_{j < i} \bigg( 1 + \frac{3p_j-1}{(1-\delta_j)(p_j-1)^2} \bigg),
\end{multline*}
as required.
\end{proof}

\pagebreak
\section{Proof of the Main Theorem}\label{sec:proof}

In order to deduce Theorem~\ref{thm:uncoveredDensity} from Theorems~\ref{thm:general} and~\ref{prop:moments}, it will suffice to show that there is an appropriate choice
of $M$ and $\delta_1,\delta_2,\dots,\delta_n$.

\begin{proof}[Proof of Theorem~\ref{thm:uncoveredDensity}]
Let $p_1,\dots,p_n$ be the primes that divide~$Q$, listed in increasing order, and fix an integer~$k^*$,
to be determined later. We set $\delta_i = 0$ for $i \le k^*$ and
\[
 \delta_i = \frac{\mu(p_i)-1}{\mu(p_i)}
 = \frac{(\log p_i)^{3+\eps}}{p_i} \bigg(1 + \frac{(\log p_i)^{3+\eps}}{p_i} \bigg)^{-1}
\]
for $i > k^*$. Note that we have $\delta_1,\dots,\delta_n \in [0,1/2]$ if $k^*$ is chosen sufficiently large.

In order to apply Theorem~\ref{thm:general}, we will bound $M_i^{(1)}$ for each $i \le k^*$, and $M_i^{(2)}$ for $i > k^*$. We will do so using Theorem~\ref{prop:moments}.

\begin{claim}\label{claim1}
 For any choice of\/~$k^*$, if\/ $M$ is sufficiently large then
 \[
  \sum_{i\le k^*} M_i^{(1)} \le \frac{1}{4}.
 \]
\end{claim}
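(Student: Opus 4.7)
The plan is to apply the first (sharper) inequality in Theorem~\ref{prop:moments} with $k=1$, and then exploit two features of the setup. First, since $\delta_j = 0$ for every $j \le k^*$, the multiplicative function $\nu$ defined in~\eqref{def:nu} is identically equal to $1$ on divisors of $Q_{k^*}$, so for every $i \le k^*$ the bound from Theorem~\ref{prop:moments} simplifies to
\[
 M_i^{(1)} \le \sum_{mp_i^j \in N_i,\, m \mid Q_{i-1}} \frac{1}{mp_i^j} = \sum_{d \in N_i} \frac{1}{d}.
\]
Summing over $i \le k^*$ and noting that $\bigcup_{i \le k^*} N_i$ is a disjoint union equal to $D_{k^*} := \{d \in D : d \mid Q_{k^*}\}$, the claim reduces to showing that
\[
 \sum_{d \in D_{k^*}} \frac{1}{d} \le \frac{1}{4}
\]
provided $M$ is large in terms of $k^*$.

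Next, the elements of $D_{k^*}$ are $\{p_1,\dots,p_{k^*}\}$-smooth positive integers, each of size at least $M$, and for any fixed $k^*$ such smooth numbers are sparse at large scales, so the sum should tend to $0$ as $M \to \infty$. A clean quantitative estimate follows from Rankin's trick: since $d \ge M$ implies $1/d \le M^{-1/2} d^{-1/2}$,
\[
 \sum_{d \in D_{k^*}} \frac{1}{d} \le M^{-1/2} \sum_{d \mid Q_{k^*}} d^{-1/2}
 = M^{-1/2} \prod_{j \le k^*} \sum_{a=0}^{\gamma_j} p_j^{-a/2}
 \le M^{-1/2} \prod_{j \le k^*} \frac{1}{1 - p_j^{-1/2}}.
\]
The product on the right is uniformly bounded in the choice of primes by $(1 - 2^{-1/2})^{-k^*} = (2 + \sqrt{2})^{k^*}$, since every $p_j \ge 2$. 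Hence taking $M \ge 16(2+\sqrt{2})^{2k^*}$ yields the required bound.

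There is not really any obstacle. The one subtlety worth flagging is that a naive application of the coarser (second) inequality in Theorem~\ref{prop:moments} would bound $M_i^{(1)}$ by a product of the form $\frac{1}{p_i-1}\prod_{j<i}\frac{p_j}{p_j-1}$ that does not involve the individual moduli at all, and so would waste the hypothesis $d \ge M$ entirely. It is essential to use the sharper form of the inequality, which keeps the factor $1/d$ explicit; once that is in place the sparsity of smooth integers (equivalently, convergence of the Euler product over finitely many primes) trivially completes the proof.
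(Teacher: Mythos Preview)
Your proof is correct and follows the same overall route as the paper: use the sharp first inequality in Theorem~\ref{prop:moments} together with $\nu\equiv 1$ on $D_{k^*}$ to reduce the claim to $\sum_{d\in D_{k^*}} 1/d \le 1/4$, and then exploit the sparsity of integers built from only $k^*$ primes. The only difference is in how the tail is controlled. The paper observes that $\sum_{d\in\cS(p_{k^*})}1/d=\prod_{p\le p_{k^*}}\big(1+\tfrac{1}{p-1}\big)<\infty$ and concludes that the portion of the sum past $M$ is small; you instead apply Rankin's trick, writing $1/d\le M^{-1/2}d^{-1/2}$ and bounding the resulting Euler product. Your version has the modest advantage of yielding an explicit threshold $M\ge 16(2+\sqrt 2)^{2k^*}$ that visibly depends only on $k^*$ and not on the particular primes $p_1,\dots,p_{k^*}$ dividing $Q$ (which vary with the system $\cA$); the paper's formulation via $\cS(p_{k^*})$ leaves this uniformity implicit.
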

\begin{proof}[Proof of Claim~\ref{claim1}]
Note first that $\nu(d) = 1$ for every $d \in D_{k^*}$, by~\eqref{def:nu} and our choice
of $\delta_1,\dots,\delta_n$. Thus, by Theorem~\ref{prop:moments}, we have
\[
 \sum_{i \le k^*} M_i^{(1)}
 \le \sum_{i \le k^*} \sum_{mp_i^j\in N_i} p_i^{-j} \cdot \frac{\nu(m)}{m}
 = \sum_{i \le k^*} \sum_{d \in N_i} \frac{1}{d}
 = \sum_{d \in D_{k^*}} \frac{1}{d}.
\]
Now, let $\cS(q)$ denote the set of $q$-\emph{smooth\/} numbers, i.e., numbers all of whose prime
factors are at most $q$, and note that $D_{k^*} \subseteq \cS(p_{k^*})$, and moreover
\[
 \sum_{d \in \cS(q)} \frac{1}{d} \, = \, \prod_{p \le q} \bigg( 1 + \frac{1}{p - 1} \bigg) < \infty
\]
for every $q$, where the product is over primes $p \le q$. Hence, by choosing $M$ to be sufficiently large,
it follows that
\[
 \sum_{i \le k^*} M_i^{(1)} \le \sum_{d \in \cS(p_{k^*}), \, d \ge M} \frac{1}{d} \, \le \, \frac{1}{4},
\]
as claimed.
\end{proof}

Bounding $M_i^{(2)}$ for $i > k^*$ is only slightly less trivial.

\begin{claim}\label{claim2}
 If\/ $k^*$ is sufficiently large, then
 \[
  \sum_{i > k^*} M_i^{(2)} \le \frac{1}{4}.
 \]
\end{claim}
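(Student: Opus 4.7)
The plan is to apply Theorem~\ref{prop:moments} to bound each $M_i^{(2)}$ in terms of a product over primes $p_j < p_i$, to then bound that product using Mertens' theorem, and finally to recognise $\sum_{i > k^*} M_i^{(2)}$ as the tail of a convergent series in $p_i$, which can be made at most $1/4$ by choosing $k^*$ large.

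Concretely, I would start with the second bound in Theorem~\ref{prop:moments},
\[
 M_i^{(2)} \le \frac{1}{(p_i-1)^2} \prod_{j<i} \bigg( 1 + \frac{3p_j - 1}{(1-\delta_j)(p_j-1)^2} \bigg).
\]
Since $\delta_j \in [0,1/2]$ for every $j$ (regardless of whether $j \le k^*$ or $j > k^*$), we have $(1-\delta_j)^{-1} \le 2$ uniformly, so each factor is at most $1 + \frac{2(3p_j - 1)}{(p_j-1)^2}$. As each factor is at least $1$, I would then extend the product from the set of $p_j \mid Q$ with $j<i$ to \emph{all} primes $p < p_i$. Applying $1 + x \le e^x$ followed by Mertens' estimate $\sum_{p \le x} 1/(p-1) = \log\log x + O(1)$, this gives
\[
 \prod_{j<i}\bigg( 1 + \frac{3p_j - 1}{(1-\delta_j)(p_j-1)^2} \bigg) \;\le\; \exp\bigg(\sum_{p < p_i} \frac{6p-2}{(p-1)^2}\bigg) \;\le\; C_0\,(\log p_i)^6
\]
for some absolute constant $C_0$, and hence $M_i^{(2)} \le C_0 (\log p_i)^6/(p_i-1)^2$.

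Finally, because $p_1 < p_2 < \dots$ are distinct primes, $p_i$ is at least the $i$-th prime overall, so $p_{k^*} \to \infty$ as $k^* \to \infty$. Since the series $\sum_{p} (\log p)^6/(p-1)^2$ converges (its general term is $O((\log p)^6/p^2)$), its tail $\sum_{p > p_{k^*}}(\log p)^6/(p-1)^2$ tends to $0$, so choosing $k^*$ large enough forces $\sum_{i>k^*} M_i^{(2)} \le 1/4$. The only point that could be mistaken for an obstacle is that $\sum_p 1/p$ diverges, which inflates the product in the $M_i^{(2)}$ bound to size $(\log p_i)^{O(1)}$; but this polylogarithmic growth is easily absorbed by the $1/(p_i-1)^2$ prefactor inherited from Theorem~\ref{prop:moments}, and the argument is robust to the precise exponent on $\log p_i$. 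Notably, the specific exponent $3 + \eps$ in the definition of $\mu$ plays no role here — it is the delicate first-moment bound (used elsewhere) that forces that choice, while the second-moment tail is handled by the crude $1/p^2$ decay.
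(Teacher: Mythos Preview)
Your argument is correct for the claim as literally stated, and its skeleton (Theorem~\ref{prop:moments} $\to$ Mertens $\to$ convergent tail) matches the paper's. The paper, however, uses the specific values of the $\delta_j$ rather than the crude bound $(1-\delta_j)^{-1}\le 2$: since $\delta_j = O((\log p_j)^{3+\eps}/p_j)$, one has $\frac{3p_j-1}{(1-\delta_j)(p_j-1)^2} = \frac{3}{p_j} + O\big((\log p_j)^{3+\eps}/p_j^2\big)$, giving the sharper bound $M_i^{(2)} \le C_0(\log p_i)^3/p_i^2$ with exponent $3$ instead of your $6$.

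This difference matters, because the paper's proof actually establishes (and the application via~\eqref{eq:eta} actually requires) the stronger inequality
\[
 \sum_{i>k^*}\frac{M_i^{(2)}}{4\delta_i(1-\delta_i)} \le \frac{1}{4}.
\]
Dividing the paper's bound by $\delta_i \sim (\log p_i)^{3+\eps}/p_i$ yields $O\big(1/(p_i(\log p_i)^{\eps})\big)$, whose sum over primes converges. Dividing your bound by the same $\delta_i$ yields $O\big((\log p_i)^{3-\eps}/p_i\big)$, whose sum over primes \emph{diverges}. So your proof, while valid for the displayed inequality, would not survive the step that is actually used.

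This also means your closing remark is backwards. The exponent $3+\eps$ in $\mu$ plays no role in Claim~\ref{claim1} (whose proof only uses $\delta_j=0$ for $j\le k^*$ and the largeness of~$M$); it is chosen precisely so that, after the tight Mertens computation here gives $(\log p_i)^3$, division by $\delta_i$ leaves a convergent series.
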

\begin{proof}[Proof of Claim~\ref{claim2}]
Recall from Theorem~\ref{prop:moments} that
\[
 M_i^{(2)} \le \frac{1}{(p_i-1)^2} \prod_{j < i} \bigg( 1 + \frac{3p_j-1}{(1-\delta_j)(p_j-1)^2} \bigg)
 \le \frac{1}{(p_i-1)^2} \exp\bigg( \sum_{j < i} \frac{3p_j-1}{(1-\delta_j)(p_j-1)^2} \bigg).
\]
Now, by our choice of $\delta_1,\dots,\delta_n$, we have
\[
 \frac{3p_j-1}{(1-\delta_j)(p_j-1)^2} \le \frac{3}{p_j} + \frac{O\big((\log p_j)^{3+\eps}\big)}{p_j^2}
\]
for every $j < i$, and hence, using a weak form of Merten's theorem to deduce that
$\sum_{j < i} \frac{1}{p_j} \le \log\log p_i + O(1)$, we obtain
\[
 M_i^{(2)} \le \frac{1}{(p_i-1)^2} \exp\big( 3\log\log p_i + O(1) \big) \le \frac{C_0(\log p_i)^3}{p_i^2}
\]
for some absolute constant $C_0 > 0$. It follows that, if $i > k^*$, then
\[
 \frac{M_i^{(2)}}{4\delta_i(1-\delta_i)} = \frac{\mu(p_i)^2}{4\big( \mu(p_i) - 1 \big)} \cdot M_i^{(2)}
 \le \frac{C_1}{p_i (\log p_i)^\eps},
\]
for some absolute constant $C_1 > 0$. Now, using the prime number theorem $\pi(x) \sim x / \log x$
to crudely bound the sum of the right-hand side over all primes~$p$, we obtain
\[
 \sum_{p\ge3} \frac{C_1}{p (\log p)^\eps} = \sum_{t \ge 1} \sum_{e^t < p \le e^{t+1}} \frac{C_1}{p(\log p)^\eps}
 \le \sum_{t \ge 1} \frac{C_1 \cdot \pi\big( e^{t+1} \big)}{e^t \cdot t^\eps}
 \le \sum_{t \ge 1} \frac{C_2}{t^{1+\eps}} < \infty
\]
for any $\eps > 0$. It follows that if $k^*$ is sufficiently large, then
\[
 \sum_{i > k^*} \frac{M_i^{(2)}}{4\delta_i(1-\delta_i)} \le \frac{1}{4}.
\]
as claimed.
\end{proof}

By Claims~\ref{claim1} and~\ref{claim2}, it follows that $\eta \le 1/2$, and hence, by
Theorem~\ref{thm:general}, the uncovered set has density at least
\[
 \frac{1}{2}\exp\bigg( - 4 \sum_{d \in D} \frac{\nu(d)}{d} \bigg).
\]
Finally, observe that $\mu(d) \ge \nu(d)$ for every $d \in \N$, since both functions are multiplicative,
\[
 \mu(p) = 1 + \frac{(\log p)^{3+\eps}}{p} = \frac{1}{1-\delta_i} = \nu(p)
\]
for $p > p_{k^*}$, and $\mu(p) \ge 1 = \nu(p)$ for $p \le p_{k^*}$.
It follows that the uncovered set has density at least
\[
 \frac{1}{2}\exp\bigg( - 4 \sum_{d \in D} \frac{\mu(d)}{d} \bigg) = \frac{e^{-4C}}{2},
\]
as required.
\end{proof}

\section{The Erd\H{o}s--Graham Conjecture}\label{sec:EGconj}

As a simple consequence of Theorem~\ref{thm:uncoveredDensity}, we will next give a new strengthening of the conjecture of Erd\H{o}s and Graham~\cite{EG} mentioned in the Introduction. Recall that in the original conjecture, which was confirmed by Filaseta, Ford, Konyagin, Pomerance and Yu~\cite{FFKPY} in 2007, the lower bound $M$ on $n$ was allowed to depend on $K$. The result proved in~\cite{FFKPY} required such a bound (of the form $n > K^\omega$ for some $\omega = \omega(K) \to \infty$ as $K \to \infty$), but gave an asymptotically optimal bound on the density of the uncovered set. Our result gives a non-optimal bound on this density, but does not require $n$ to grow with $K$.

\begin{theorem}
 There exists\/ $M$ such that for any\/ $K \ge 1$, there exists\/ $\delta = \delta(K) > 0$ such that the following holds.
 If\/ $A_1,\dots,A_k$ are arithmetic progressions with distinct moduli\/ $d_1,\dots,d_k \in [n,Kn]$, $n\ge M$,
 then the uncovered set\/ $R := \Z \setminus\bigcup_{i=1}^k A_i$ has density at least\/~$\delta$.
\end{theorem}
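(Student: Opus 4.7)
The plan is to derive this as a direct corollary of Theorem~\ref{thm:uncoveredDensity}, taking $M$ to be the universal constant produced by that theorem. Since the distinct moduli $d_1,\dots,d_k$ all lie in $[n,Kn]$ with $n\ge M$, each $d_i\ge M$, so Theorem~\ref{thm:uncoveredDensity} applies and yields that the density of $R$ is at least $e^{-4C}/2$, where $C=\sum_{i=1}^k\mu(d_i)/d_i$. It therefore suffices to show that $C$ is bounded above by a quantity depending on $K$ alone, uniformly in $n$; any such bound immediately produces a valid $\delta(K)>0$.

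The key estimate I need is $S(x):=\sum_{d\le x}\mu(d)\le C_1 x$ for all $x\ge 1$, with $C_1$ an absolute constant depending only on $\eps$. To obtain this, write $\mu=g*1$ (Dirichlet convolution), where $g$ is the multiplicative function determined by $g(p)=(\log p)^{3+\eps}/p$ and $g(p^j)=0$ for $j\ge 2$ (these values are forced by $\mu(p^j)=\mu(p^{j-1})$ for $j\ge 2$). Then
\[
 S(x)=\sum_{e\le x}g(e)\lfloor x/e\rfloor\le x\sum_{e\ge 1}\frac{g(e)}{e}=x\prod_p\bigg(1+\frac{(\log p)^{3+\eps}}{p^2}\bigg)=:C_1\, x,
\]
and the infinite product converges because $\sum_p(\log p)^{3+\eps}/p^2<\infty$, a routine consequence of the prime number theorem.

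Armed with this, observe that $C\le\sum_{d=n}^{Kn}\mu(d)/d$, and Abel summation together with $S(t)\le C_1 t$ gives
\[
 \sum_{d=n}^{Kn}\frac{\mu(d)}{d}\le\frac{S(Kn)}{Kn}+\int_n^{Kn}\frac{S(t)}{t^2}\,dt\le C_1+C_1\log K.
\]
Substituting $C\le C_1(1+\log K)$ into Theorem~\ref{thm:uncoveredDensity} yields the density bound $\delta(K):=\tfrac{1}{2}e^{-4C_1(1+\log K)}$, which is (up to a constant) a negative power of $K$. There is essentially no obstacle here: the only point worth noting is that $\mu(p)\to 1$ quickly enough that $\sum_{d\le x}\mu(d)$ is genuinely linear in $x$, after which the rest is textbook partial-summation bookkeeping.
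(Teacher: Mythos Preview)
Your proof is correct and takes a genuinely different route from the paper's. Both reduce to bounding $\sum_{d\in[n,Kn]}\mu(d)/d$ uniformly in $n$, but the paper establishes this by an induction on dyadic scales: it writes $\mu(d)$ as a telescopic sum $\mu(\gcd(d,Q_{i_0}))+\sum_{i>i_0}(\mu(p_i)-1)\mu(\gcd(d,Q_{i-1}))$, sums over $d\in[n,Kn]$, and applies the inductive hypothesis to the shifted intervals $[n/p_i,Kn/p_i]$, choosing $i_0$ large enough that the tail $\sum_{i>i_0}(\mu(p_i)-1)/p_i$ is at most $1/2$. Your approach is more direct: the identity $\mu=g*1$ with $g$ supported on squarefrees and $g(p)=(\log p)^{3+\eps}/p$ immediately gives $\sum_{d\le x}\mu(d)\le x\prod_p(1+(\log p)^{3+\eps}/p^2)=C_1x$, after which Abel summation is routine. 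Your argument is shorter, avoids the induction entirely, and produces the explicit shape $\delta(K)\asymp K^{-4C_1}$; the paper's telescoping/induction scheme is perhaps more in keeping with the ``prime-by-prime'' philosophy of the sieve developed elsewhere in the paper, but for this particular corollary your convolution argument is the cleaner one.
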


\begin{proof}
We apply Theorem~\ref{thm:uncoveredDensity} with $\eps = 1$. In order to prove the corollary, it will suffice to show
that there exists a constant $C > 0$, depending only on~$K$, such that
\begin{equation}\label{e:sbound}
 \sum_{d \in [n,Kn]} \frac{\mu(d)}{d} \le C
\end{equation}
for all $n \in \N$. Indeed, if we set $\delta := e^{-4C}/2$ then it follows from Theorem~\ref{thm:uncoveredDensity} that,
if \eqref{e:sbound} holds and $n \ge M$, then the density of $R$ is at least~$\delta$.

We will prove, by induction on $t \ge 0$, that \eqref{e:sbound} holds for all $n \le 2^t$. This is clearly the case when $t = 0$
(assuming that $C = C(K)$ is sufficiently large), so let $t \ge 1$, and let us assume that~\eqref{e:sbound} holds for all $n \le 2^{t-1}$.
We will use the following telescopic series, which holds for any $i_0$ and $d \mid Q$ by the definition~\eqref{def:mu} of $\mu$:
\begin{align*}
 \mu(d) &= \mu\big( \gcd(d,Q_{i_0}) \big) + \sum_{i > i_0} \Big( \mu\big( \gcd(d,Q_i) \big) - \mu\big( \gcd(d,Q_{i-1}) \big) \Big)\\
 &= \mu\big( \gcd(d,Q_{i_0}) \big) + \sum_{p_i \mid d, \, i > i_0} \big( \mu(p_i) - 1 \big)\mu\big( \gcd(d,Q_{i-1}) \big).
\end{align*}
Summing over $d \in [n,Kn]$, we obtain
\[
 \sum_{d \in [n,Kn]} \frac{\mu(d)}{d} \le \sum_{d \in [n,Kn]} \frac{\mu(Q_{i_0})}{d}
  + \sum_{i > i_0} \frac{\mu(p_i) - 1}{p_i} \sum_{d \in [n/p_i,Kn/p_i]}\frac{\mu(d)}{d}.
\]
Now, since $n/p \le 2^{t-1}$ for every prime~$p$, it follows by the induction hypothesis that
\[
 \sum_{d \in [n,Kn]} \frac{\mu(d)}{d} \le \mu(Q_{i_0}) \big( \log K + 1 \big) + C \sum_{i > i_0} \frac{\mu(p_i) - 1}{p_i}.
\]
Finally, note that the sum over all primes
\[
 \sum_{p \text{ prime}} \frac{\mu(p) - 1}{p} = \sum_{p \text{ prime}} \frac{(\log p)^4}{p^2}
\]
converges, and therefore
\[
 \sum_{d \in [n,Kn]} \frac{\mu(d)}{d} \le \mu(Q_{i_0})\big( \log K + 1 \big) + \frac{C}{2} \le C
\]
if $i_0$ and then $C = C(K)$ are chosen sufficiently large, as required.
\end{proof}

\section{A general method of applying the sieve}\label{sec:general:method}

In this section we describe a practical method of applying our method to problems involving covering systems, such as Schinzel's conjecture and the Erd\H{o}s--Selfridge problem. More precisely, we will show how one can choose  the constants $\delta_i$ sequentially and optimally via a simple recursion which may be run on a computer. We will also give a simple criterion (see Theorem~\ref{thm:f:sufficient}, below) which we prove is sufficient to deduce that the collection $\cA$ does not cover the integers. Combining these (that is, running the recursion until the criterion is satisfied), we reduce the problems to finite calculations, which in some cases are tractable. To demonstrate the power of this approach, we will use it in Sections~\ref{sec:ES}--\ref{Schinzel:sec} to prove Theorems~\ref{thm:Schinzel} and~\ref{thm:235}, and to significantly improve the bound on $M$ in Hough's theorem. 

We begin by defining a sequence of numbers $f_k = f_k(\cA)$, which will (roughly speaking) encode how ``well" we are doing after $k$ steps of our sieve. We remark that, from now on, we will perform a step of the sieve for \emph{every} prime, whether or not it divides $Q$. We will therefore write $p_k$ for the $k$th prime, i.e., $p_1 = 2$, $p_2 = 3$, etc. Fix $i_0 \in \N$, and define 
\begin{equation}\label{fk:def}
f_k = f_k(\cA) := \frac{\kappa}{\mu_k} \prod_{i_0 < i \le k}\bigg( 1+\frac{3p_i-1}{(1-\delta_i)(p_i-1)^2} \bigg)
\end{equation}
for each $k \ge i_0$, where 
$$\mu_k := 1 - \sum_{i \le k} \Pr_i(B_i),$$
and $\kappa > 0$ and $i_0 \in \N$ are chosen so that
\begin{equation}\label{kappa:def:assumption}
 M^{(2)}_k \le \frac{\kappa}{(p_k - 1)^2} \prod_{i_0 < i < k}\bigg( 1+\frac{3p_i-1}{(1-\delta_i)(p_i-1)^2} \bigg) = \frac{\mu_{k-1} f_{k-1}}{(p_k - 1)^2}
\end{equation}
for every $k > i_0$. For example, by Theorem~\ref{prop:moments},
\begin{equation}\label{kappa:sufficient}
\kappa = \prod_{i \le i_0, \, p_i \mid Q}\bigg( 1+\frac{3p_i-1}{(1-\delta_i)(p_i-1)^2} \bigg)
\end{equation}
is a valid choice, although in some cases we can prove a stronger bound. We remark that, in practice, we will choose the constants $\kappa$ and $\delta_1,\ldots,\delta_{i_0}$, and show that~\eqref{kappa:def:assumption} holds for any sequence $(\delta_{i_0+1},\ldots,\delta_n)$. We will then choose each subsequent $\delta_i$ so as to minimize $f_i$. 

The following theorem gives a sufficient condition (at step $k$) for our sieve to be successful. The bound we prove gives an almost optimal termination criterion when $k$ is large.

\pagebreak

\begin{theorem}\label{thm:f:sufficient}
Let\/ $k \ge 10$. If\/ $\mu_k > 0$ and\/ $f_k(\cA) \le (\log k + \log\log k - 3)^2k$, then the system of arithmetic progressions $\cA$ does not cover\/~$\Z$.
\end{theorem}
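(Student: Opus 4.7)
The plan is to continue the sieve past step $k$ with $\delta_i = 1/2$ for every $i > k$ and show that the additional removed mass $\sum_{i > k}\Pr_i(B_i)$ is strictly less than $\mu_k$. Applying the same union-bound argument as in the first inequality of~\eqref{gen:proof:first:step} will then give $\Pr_n(R) \ge \mu_k - \sum_{i > k}\Pr_i(B_i) > 0$, so $R \ne \emptyset$ and $\cA$ does not cover $\Z$. The choice $\delta_i = 1/2$ is asymptotically optimal because it maximises the quantity $4\delta_i(1-\delta_i) = 1$ that appears in the denominator of the second-moment bound of Lemma~\ref{lem:B_i}.

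Unwinding the assumption~\eqref{kappa:def:assumption}, and observing that $\mu_k f_k = \kappa\prod_{i_0 < j \le k}\big(1 + \tfrac{3p_j-1}{(1-\delta_j)(p_j-1)^2}\big)$ by the definition of $f_k$, one obtains for every $i > k$
\[
 \Pr_i(B_i) \,\le\, M_i^{(2)} \,\le\, \frac{\mu_k f_k}{(p_i-1)^2}\,H_{i-1}, \qquad\text{where}\qquad H_{i-1} \,:=\, \prod_{k < j < i}\bigg( 1 + \frac{2(3p_j-1)}{(p_j-1)^2} \bigg).
\]
In view of the hypothesis $f_k \le (\log k + \log\log k - 3)^2 k$, the theorem therefore reduces to establishing the purely analytic estimate
\[
 \sum_{i > k} \frac{H_{i-1}}{(p_i-1)^2} \;<\; \frac{1}{(\log k + \log\log k - 3)^2 \, k}\qquad\text{for all } k \ge 10.
\]

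To prove this inequality, I would first use the elementary bound $1 + \frac{2(3p-1)}{(p-1)^2} \le \big(\tfrac{p+1}{p-1}\big)^3$ (which follows by cross-multiplication, and is strict for all $p$), so that $H_{i-1} \le \prod_{k<j<i}\big(\tfrac{p_j+1}{p_j-1}\big)^3$. An effective version of Mertens' theorem gives $\prod_{p\le x}\tfrac{p+1}{p-1} = \tfrac{6 e^{2\gamma}}{\pi^2}(\log x)^2(1+o(1))$, and taking the quotient yields $H_{i-1} \le (1+o(1))\big(\log p_{i-1}/\log p_k\big)^6$. Abel summation combined with the Rosser--Schoenfeld form of the prime number theorem then gives $\sum_{p > p_k}(\log p)^6/p^2 \le (1+o(1))(\log p_k)^5/p_k$, so that $\sum_{i>k} H_{i-1}/(p_i-1)^2 \le (1+o(1))/(p_k \log p_k)$. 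The claim now follows from the explicit asymptotic $p_k = k(\log k + \log\log k - 1 + o(1))$: writing $u := \log k + \log\log k$, one has $p_k \log p_k = k(u^2 - u) + o(ku)$ whereas $(\log k + \log\log k - 3)^2 k = k(u^2 - 6u + 9)$, so the gap $5ku - 9k$ comfortably absorbs the accumulated $(1+o(1))$ factors for all $k \ge 10$ (where $u \ge 3.1 > 9/5$).

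The main obstacle is making every analytic estimate effective and quantitative all the way down to $k = 10$, rather than just asymptotic. The constant $3$ in the statement is precisely tuned to absorb the combined error terms from the effective Mertens' theorem, the Rosser--Schoenfeld bounds on $\pi(x)$ and $p_k$, and the Abel summation for the tail sum; for the boundary case $k = 10$ one may well need to supplement the analytic argument by verifying directly the contribution of the smallest primes.
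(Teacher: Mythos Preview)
Your setup matches the paper: continue with $\delta_i = 1/2$ for $i > k$, so that Lemma~\ref{lem:B_i} and~\eqref{kappa:def:assumption} give $\Pr_i(B_i) \le M_i^{(2)} \le \mu_{i-1} f_{i-1}/(p_i-1)^2 = \mu_k f_k\,H_{i-1}/(p_i-1)^2$, and the goal $\mu_n > 0$ is equivalent to $\sum_{i>k}\Pr_i(B_i) < \mu_k$. Your reduction to the analytic inequality
\[
 \sum_{i>k}\frac{H_{i-1}}{(p_i-1)^2} < \frac{1}{(\log k + \log\log k - 3)^2\,k}
\]
is correct, and your asymptotic heuristic (both sides are $\sim 1/(k\log^2 k)$, with the margin coming from $(u-3)^2 < u(u-1)$) is sound.

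The paper, however, does not sum the tail at all. It proves by induction that $\mu_i > 0$ and $f_i \le \lambda_i^2\,i$ for every $i \ge k$, where $\lambda_i := \log i + \log\log i - 3$. The inductive step feeds $f_{i-1} \le \lambda_{i-1}^2(i-1)$ into the recursion of Lemma~\ref{lem:f:recursion} (with $\delta_i = 1/2$), namely $f_i \le f_{i-1}(1+2a_i)/(1-4b_if_{i-1})$, and uses only Dusart's explicit bound $p_i \ge i(\log i + \log\log i - 1)$, rewritten as $p_i - 1 \ge (\lambda_{i-1}+2)i$. After substitution, the step reduces to the polynomial inequality $8i^2 + \lambda^3 i + 4\lambda^2 i + 8\lambda i + 2\lambda^2 + 4\lambda \ge 0$, which is trivially true. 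No Mertens products, no Abel summation, and no effective PNT error terms enter.

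So your route is genuinely different rather than wrong, but as you yourself note, it is incomplete: the accumulated $(1+o(1))$ factors from effective Mertens and Rosser--Schoenfeld would have to be tracked down to $k=10$, and you have not done this. The paper's inductive bookkeeping through $f_i$ is precisely what sidesteps that accounting, trading your global tail estimate for a local step-by-step comparison that needs only the single clean input on $p_i$.
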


In this section, it will be convenient to define, for each $i \in \N$, 
\begin{equation}\label{def:ab}
 a_i=\frac{3p_i-1}{(p_i-1)^2}\qquad\text{and}\qquad
 b_i=\frac{1}{4(p_i-1)^2}.
\end{equation}
The first step in the proof of Theorem~\ref{thm:f:sufficient} is the following simple (but key) lemma. 

\begin{lemma}\label{lem:f:recursion}
Let $i > i_0$, and assume that $\mu_{i-1} > 0$. If $b_if_{i-1} < \delta_i(1-\delta_i)$, then $\mu_i > 0$, and
\begin{equation}\label{e:f_i}
 f_i \, \le \bigg( 1+\frac{a_i}{1-\delta_i} \bigg) \bigg( 1 - \frac{b_if_{i-1}}{\delta_i(1-\delta_i)} \bigg)^{-1} f_{i-1}.
\end{equation}
\end{lemma}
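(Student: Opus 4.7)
First I would observe that, by the definition~\eqref{fk:def} together with that of $a_i$ in~\eqref{def:ab}, one has
\[
 \frac{f_i}{f_{i-1}} \;=\; \frac{\mu_{i-1}}{\mu_i}\bigg(1 + \frac{a_i}{1 - \delta_i}\bigg),
\]
so the conclusion~\eqref{e:f_i} reduces to establishing the lower bound
\[
 \frac{\mu_i}{\mu_{i-1}} \;\ge\; 1 - \frac{b_i f_{i-1}}{\delta_i(1 - \delta_i)}.
\]
Since $\mu_i - \mu_{i-1} = -\Pr_i(B_i)$ directly from the definition of $\mu_k$, this in turn is equivalent to the bound $\Pr_i(B_i) \le \frac{\mu_{i-1}\, b_i\, f_{i-1}}{\delta_i(1-\delta_i)}$.

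Next I would combine Lemma~\ref{lem:B_i}, which gives $\Pr_i(B_i) \le \frac{M_i^{(2)}}{4\delta_i(1-\delta_i)}$, with the structural hypothesis~\eqref{kappa:def:assumption}, which states that $M_i^{(2)} \le \frac{\mu_{i-1} f_{i-1}}{(p_i-1)^2}$. Recalling that $b_i = \frac{1}{4(p_i-1)^2}$ from~\eqref{def:ab}, the chain of inequalities yields exactly the required upper bound on $\Pr_i(B_i)$. Finally, the hypothesis $b_i f_{i-1} < \delta_i(1 - \delta_i)$ guarantees that $1 - \frac{b_i f_{i-1}}{\delta_i(1-\delta_i)} > 0$, and combined with $\mu_{i-1} > 0$ this forces $\mu_i > 0$; then inverting the lower bound on $\mu_i/\mu_{i-1}$ and substituting back into the ratio $f_i/f_{i-1}$ gives~\eqref{e:f_i}.

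I do not anticipate any real obstacle here: the lemma is essentially a bookkeeping step that packages the second-moment bound of Lemma~\ref{lem:B_i} together with the hypothesis~\eqref{kappa:def:assumption} into a clean multiplicative recursion for $f_i$. The only subtlety is to ensure strict positivity of the denominator $1 - \frac{b_i f_{i-1}}{\delta_i(1-\delta_i)}$ in~\eqref{e:f_i} (so that we may legitimately invert to bound $\mu_{i-1}/\mu_i$), and this is precisely the role played by the hypothesis $b_i f_{i-1} < \delta_i(1-\delta_i)$.
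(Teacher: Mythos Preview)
Your proposal is correct and follows essentially the same approach as the paper: both proofs combine Lemma~\ref{lem:B_i} with the hypothesis~\eqref{kappa:def:assumption} to bound $\Pr_i(B_i) = \mu_{i-1} - \mu_i$, deduce the lower bound $\mu_i \ge \mu_{i-1}\big(1 - b_i f_{i-1}/(\delta_i(1-\delta_i))\big) > 0$, and then substitute into the ratio $f_i/f_{i-1} = (\mu_{i-1}/\mu_i)\big(1 + a_i/(1-\delta_i)\big)$.
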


\begin{proof}
Recall from Lemma~\ref{lem:B_i} and~\eqref{kappa:def:assumption}, that 
\begin{equation}\label{eq:mu:step}
\mu_{i-1} - \mu_i \, = \, \Pr_i(B_i) \, \le \, \frac{M^{(2)}_i}{4\delta_i(1-\delta_i)} \, \le \, \frac{\mu_{i-1} f_{i-1}}{4\delta_i(1-\delta_i)(p_i - 1)^2}.
\end{equation}
It follows that $\mu_i \ge \mu_{i-1} \big( 1 - b_if_{i-1} / \delta_i(1-\delta_i) \big) > 0$, and moreover
\begin{align*}
\frac{f_i}{f_{i-1}} & \, = \, \frac{\mu_{i-1}}{\mu_i} \bigg( 1+\frac{3p_i-1}{(1-\delta_i)(p_i-1)^2} \bigg) \\
& \, \le \, \bigg( 1 + \frac{3p_i-1}{(1-\delta_i)(p_i-1)^2} \bigg) \bigg( 1 - \frac{f_{i-1}}{4\delta_i(1-\delta_i)(p_i-1)^2} \bigg)^{-1},
\end{align*}
as claimed.
 \end{proof}

To deduce Theorem~\ref{thm:f:sufficient}, we will use the main result (which is also the title) of~\cite{D}, which states that for each $k \ge 2$, the $k$th prime is greater than $k(\log k + \log\log k-1)$.

\begin{proof}[Proof of Theorem~\ref{thm:f:sufficient}]
We are required to show that $\mu_n > 0$; to do so, we will use Lemma~\ref{lem:f:recursion} to show, by induction on $i$, that $\mu_i > 0$ for every $k \le i \le n$. As part of the induction, we will also prove that $f_i \le \lambda_i^2 i$ for each $k \le i \le n$, where $\lambda_i = \log i + \log\log i - 3$. 

Note that the base case, $i = k$, follows from our assumptions, and set $\delta_i = 1/2$ for each $k < i \le n$. By Lemma~\ref{lem:f:recursion}, for the induction step it will suffice to show that  $4b_if_{i-1} < 1$ and
\[
\lambda_{i-1}^2 (i - 1) \bigg( \frac{1+2a_i}{1-4b_if_{i-1}} \bigg) \le \lambda_i^2 i,
\]
where $a_i$ and $b_i$ were defined in~\eqref{def:ab}. We claim that, writing $\lambda=\lambda_{i-1}$, we have
\[
2a_i \le \frac{6(\lambda+2)i + 4}{(\lambda+2)^2i^2},\qquad 4b_if_{i-1} \le \frac{\lambda^2(i-1)}{(\lambda+2)^2i^2} < 1,\qquad \text{and}\qquad \lambda_i^2i \ge \lambda (\lambda i+2).
\]
To see these inequalities, note that $p_i \ge (\lambda_i + 2) i$ for all $i \ge k$, by the result of~\cite{D} stated above, and $\lambda_i - \lambda_{i-1} \ge \log(i/(i-1)) > 1/i$, so $p_i - 1 \ge (\lambda_{i-1} + 1/i + 2)i - 1 = (\lambda + 2)i$.

It is therefore sufficient to show that
\[
\lambda^2(i-1) \bigg( 1 + \frac{6(\lambda+2)i + 4}{(\lambda+2)^2i^2} \bigg) \le \lambda(\lambda i+2) \bigg( 1 -  \frac{\lambda^2(i-1)}{(\lambda+2)^2i^2} \bigg),
\]
which (after multiplying by $(\lambda+2)^2i^2/\lambda$, expanding and rearranging) becomes 
$$0 \le 8i^2 + \lambda^3i + 4\lambda^2i + 8\lambda i + 2\lambda^2 + 4\lambda.$$ 
This clearly holds for all $\lambda > 0$ and $i \in \N$, and so the theorem follows.
\end{proof}

Now, combining Theorem~\ref{thm:f:sufficient} and Lemma~\ref{lem:f:recursion}, we can deduce from (sufficiently strong) bounds on $f_k$, for any $k \in \N$, that the uncovered set is non-empty. To do so, observe first that, given~$f_{i-1}$, the bound on $f_i$ given by Lemma~\ref{lem:f:recursion} is just a function of a \emph{single\/}~$\delta_i$. Elementary calculus shows that the optimal choice of $\delta_i$ occurs when 
\begin{equation}\label{e:deltachoice}
 \delta_i 
 = \frac{1+a_i}{1+\sqrt{1+a_i(1+a_i)/(b_if_{i-1})}}.
\end{equation}
This expression for $\delta_i$ allows for very fast numerical computation of the bounds on the~$f_i$.


\begin{table}
\[\begin{array}{c|c|c}
 k&p_k&g_k \ge \\
 \hline
 2&3&1.260997\\
 3&5&3.007888\\
 4&7&5.860938\\
 5&11&9.032082\\
 6&13&13.30344\\
 7&17&17.99687\\
 8 & 19  & 23.90973 \\
 9 & 23  & 30.38722 \\
10 & 29 & 36.72372 \\
100 & 541 & 1691.365 \\
1000 & 7919  & 42420.78 \\
10000 & 104729  & 802133.7 \\
51000 & 625187 & 5821999
\end{array}\]
\medskip
\caption{Upper bounds on $f_k$ that ensure that the system does not cover the integers. All the bounds are rounded down in the last decimal digit. At each stage in the calculation, $f_k$ was increased by a factor of $1+10^{-15}$ to account for rounding errors in the floating point arithmetic. The choice of $51000$ is needed to make $p_k > 616000$, as used in Section~\ref{sec:minmod}, below.}\label{t:1}
\end{table}

Let us therefore, for each $i \in \N$, define $g_i$ to be the largest value of $f_i(\cA)$ such that, by repeatedly applying the recursion~\eqref{e:f_i} with $\delta_j$ given by~\eqref{e:deltachoice}, we eventually satisfy the conditions of Theorem~\ref{thm:f:sufficient} for some $k \ge 10$. In Table~\ref{t:1} we list the bounds on $g_k$ given by performing this calculation, which was implemented as follows: starting with a potential value of $f_3$, we ran the iteration given in~\eqref{e:f_i} using the value of $\delta_i$ given in~\eqref{e:deltachoice} until either the conditions of Theorem~\ref{thm:f:sufficient} were satisfied, or the condition $b_if_{i-1} < \delta_i(1-\delta_i)$ failed. The optimal value $g_3$ of $f_3$ was determined by binary chop, and the other bounds $g_k$ were read off by taking the largest successful $f_3$ and listing the corresponding bounds on $f_k$.

Let us state, for future reference, the conclusion of this section.

\begin{corollary}\label{cor:gk}
If $f_k(\cA) \le g_k$ for some $k \in \N$, then the system of arithmetic progressions $\cA$ does not cover\/~$\Z$.
\end{corollary}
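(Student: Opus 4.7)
The plan is to show that if $f_k(\cA) \le g_k$, then one can run the numerical iteration defining $g_k$ in parallel with the actual sieve applied to $\cA$, using the hypothetical values as upper bounds on the true ones until we reach a step where Theorem~\ref{thm:f:sufficient} closes the argument. Initialize $\tilde f_k := f_k(\cA)$, and for each $i > k$ inductively set $\tilde\delta_i$ equal to the right-hand side of~\eqref{e:deltachoice} with $f_{i-1}$ replaced by $\tilde f_{i-1}$, and let $\tilde f_i$ be the right-hand side of~\eqref{e:f_i} with $\delta_i = \tilde\delta_i$ and $f_{i-1} = \tilde f_{i-1}$, taken as an equality. Since the minimum over $\delta$ of the Lemma~\ref{lem:f:recursion} bound is a non-decreasing function of $f_{i-1}$, the iteration starting from $\tilde f_k \le g_k$ is dominated step-by-step by the iteration starting from $g_k$; so, by the definition of $g_k$, there exists a least $\ell \ge 10$ at which $\tilde f_\ell \le (\log\ell + \log\log\ell - 3)^2\ell$, with each intermediate step satisfying the admissibility condition $b_i\tilde f_{i-1} < \tilde\delta_i(1-\tilde\delta_i)$.

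For the collection $\cA$ I would then declare the sieve parameters $\delta_i := \tilde\delta_i$ for each $k < i \le \ell$; these values depend only on $\tilde f_k,\tilde f_{k+1},\dots$ and on the numerical scheme, not on $\cA$. An induction on $i \in \{k,k+1,\dots,\ell\}$ shows that $\mu_i > 0$ and $f_i(\cA) \le \tilde f_i$. The base case $i = k$ is immediate, with $\mu_k > 0$ implicit in the hypothesis $f_k(\cA) \le g_k$ (otherwise $f_k(\cA)$ would fail to be a positive real number comparable to $g_k$). For the inductive step, $f_{i-1}(\cA) \le \tilde f_{i-1}$ gives $b_i f_{i-1}(\cA) \le b_i\tilde f_{i-1} < \tilde\delta_i(1-\tilde\delta_i)$, so Lemma~\ref{lem:f:recursion} applies, yielding $\mu_i > 0$ together with
\[
 f_i(\cA) \, \le \, \bigg(1 + \frac{a_i}{1-\tilde\delta_i}\bigg)\bigg(1 - \frac{b_i f_{i-1}(\cA)}{\tilde\delta_i(1-\tilde\delta_i)}\bigg)^{-1} f_{i-1}(\cA).
\]
Because the map $t \mapsto (1-ct)^{-1}t$ has derivative $(1-ct)^{-2} > 0$ for $ct < 1$, the displayed right-hand side is non-decreasing in $f_{i-1}(\cA)$ with $\tilde\delta_i$ held fixed, and hence is at most the same expression with $f_{i-1}(\cA)$ replaced by $\tilde f_{i-1}$, which is exactly $\tilde f_i$.

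At step $\ell$ we therefore have $\mu_\ell > 0$ and $f_\ell(\cA) \le \tilde f_\ell \le (\log\ell + \log\log\ell - 3)^2\ell$, and Theorem~\ref{thm:f:sufficient} immediately concludes that $\cA$ does not cover $\Z$. The only non-routine point in this argument is the monotonicity of the optimal-$\delta$ iterate in the starting value, but this follows from the elementary monotonicity of the Lemma~\ref{lem:f:recursion} bound in $f_{i-1}$ for fixed $\delta_i$ noted above; everything else is a bookkeeping exercise that translates the numerical scheme used to generate Table~\ref{t:1} into a rigorous inductive statement about the actual sieve parameters attached to $\cA$.
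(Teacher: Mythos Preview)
Your argument is correct and is precisely the rigorous unpacking of what the paper leaves implicit: the corollary is stated without proof, as an immediate consequence of the definition of $g_k$ together with Lemma~\ref{lem:f:recursion} and Theorem~\ref{thm:f:sufficient}. You have correctly identified and verified the one non-trivial point, namely the monotonicity of the optimized recursion in its starting value, and then carried out the routine induction comparing the actual $f_i(\cA)$ to the numerical iterates~$\tilde f_i$.
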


We remark that the bound on $g_1$ given by our sieve is less than $1$, and for this reason we are unable to resolve the Erd\H{o}s--Selfridge problem.

\section{The Erd\H{o}s--Selfridge Problem}\label{sec:ES}

In this section we will prove Theorem~\ref{thm:235}, which is a simple consequence of the method described in the previous section. First, however, let us show how to prove the following (only slightly weaker) theorem, which was first proved by Hough and Nielsen~\cite{HN}.

\begin{theorem}\label{thm:23}
Let\/ $\cA$ be a finite collection of arithmetic progressions with distinct moduli, none of which is divisible by $2$ or $3$. Then $\cA$ does not cover the integers. 
\end{theorem}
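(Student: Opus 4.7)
The plan is to apply Corollary~\ref{cor:gk} at the very first available value of $k$, namely $k=2$, using the parameter choices $i_0 = 2$, $\kappa = 1$, and $\delta_1 = \delta_2 = 0$ in the general sieve framework of Section~\ref{sec:general:method}. The hypothesis of the theorem translates to $p_1 = 2$ and $p_2 = 3$ not dividing $Q = \lcm(D)$, so the first two steps of the sieve are completely trivial: $N_1 = N_2 = \emptyset$, hence $B_1 = B_2 = \emptyset$, and so $\Pr_1(B_1) = \Pr_2(B_2) = 0$. This means that $\mu_2 = 1 - \Pr_1(B_1) - \Pr_2(B_2) = 1$, and the measure is unchanged after the first two stages (regardless of the choice of $\delta_1, \delta_2$).

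Next I would check that~\eqref{kappa:def:assumption} is satisfied with $\kappa = 1$. Since $Q_2 = 1$, any divisor $m_1, m_2 \mid Q_{k-1}$ appearing in the bound of Lemma~\ref{lem:moments} is coprime to~$6$, so the factors corresponding to $j = 1, 2$ in the product bound of Theorem~\ref{prop:moments} may be replaced by~$1$. Equivalently, since every factor in the product in~\eqref{kappa:def:assumption} is at least~$1$, the full product $\prod_{2 < i < k}$ on the right-hand side only inflates the bound, so~\eqref{kappa:def:assumption} with $\kappa = 1$ is dominated by the unrestricted Theorem~\ref{prop:moments} bound for every $k > 2$.

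With these parameters in hand, the definition~\eqref{fk:def} gives $f_2 = \kappa/\mu_2 = 1$, since the product $\prod_{2 < i \le 2}$ is empty. Reading off Table~\ref{t:1}, we have $g_2 \ge 1.260997 > 1 = f_2(\cA)$, so the hypothesis $f_k(\cA) \le g_k$ of Corollary~\ref{cor:gk} holds at $k = 2$, and the corollary immediately yields that $\cA$ does not cover $\Z$.

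I do not anticipate any real obstacle: the content of the argument is entirely packaged inside the general machinery of Section~\ref{sec:general:method}, and the only thing the hypothesis ``no modulus divisible by $2$ or $3$'' is used for is to make the first two sieve stages empty, so that one may start the recursion at stage $i_0 = 2$ with the trivially good starting value $f_2 = 1$. The only bookkeeping step requiring care is verifying that $\kappa = 1$ is admissible in~\eqref{kappa:def:assumption}, which is handled by the observation above.
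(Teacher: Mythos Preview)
Your proposal is correct and takes essentially the same approach as the paper's own proof: set $i_0 = 2$, $\kappa = 1$, observe that $\mu_2 = 1$ since no modulus is divisible by $2$ or $3$, conclude $f_2 = 1 < g_2$, and apply Corollary~\ref{cor:gk}. The only cosmetic difference is that the paper justifies $\kappa = 1$ directly via~\eqref{kappa:sufficient} (the product there is empty since $2,3 \nmid Q$), whereas you argue slightly more indirectly via Lemma~\ref{lem:moments}; both are fine.
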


\begin{proof}
Set $i_0 = 2$ and $\kappa = 1$, note that $\mu_1 = \mu_2 = 1$, since there are no moduli divisible by 2 or 3, and recall from~\eqref{kappa:sufficient} that this is a valid choice of $\kappa$, by Theorem~\ref{prop:moments}. Recalling from~\eqref{fk:def} that $f_2 = \kappa / \mu_2$, and using Table~\ref{t:1}, we see that $f_2 = 1 < 1.26 < g_2$, and hence, by Corollary~\ref{cor:gk}, the system $\cA$ does not cover the integers.
\end{proof}

We remark that we did not actually need the full strength of Corollary~\ref{cor:gk} to prove this theorem; in fact, we could have just run our sieve with $\delta_1 = \cdots = \delta_n = 1/4$, say, and applied Theorems~\ref{thm:general} and~\ref{prop:moments}, together with Theorem~\ref{thm:f:sufficient}. In order to prove Theorem~\ref{thm:235}, we will need a slightly more complicated version of the proof above.



\begin{proof}[Proof of Theorem~\ref{thm:235}]
By Theorem~\ref{thm:23}, we may assume that $Q = \lcm(D)$ satisfies $Q = 3Q'$, where $Q'$ is not divisible by $2$, $3$ or $5$. Observe that $\mu_1 = 1$, and that $\mu_3 = \mu_2 \ge 2/3$, by the (trivial) first moment bound $M_2^{(1)}\le 1/3$, and since there are no moduli divisible by~2 or~5. Set $i_0 = 3$, $\delta_1 = \delta_2 = \delta_3 = 0$, and $\kappa = 2$. To see that this is a valid choice of $\kappa$, we need to improve~\eqref{kappa:sufficient} slightly, using the fact that $3^2 \nmid Q$. To be precise, in the proof of Lemma~\ref{lem:sums}, in the last expression in~\eqref{eq:sum:of:powers}, when $j = 2$ we only need to include the term $t = 1$ in the sum. Keeping the rest of the proof of Theorem~\ref{prop:moments} the same, this implies that
\[
M_i^{(2)}  \le \frac{1}{(p_i-1)^2} \bigg( 1 + \frac{3}{(1-\delta_2)p_2} \bigg) \prod_{3 < j < i} \bigg( 1 + \frac{3p_j-1}{(1-\delta_j)(p_j-1)^2} \bigg),
\]
and hence~\eqref{kappa:def:assumption} holds with $\kappa = 1 + 3/p_2 =2$, as claimed. Using Table~\ref{t:1}, it follows that
\[
 f_3 = \frac{\kappa}{\mu_3} \le 3 < 3.007 \le g_3,
\]
and therefore, by Corollary~\ref{cor:gk}, the system $\cA$ does not cover the integers.
\end{proof}

\section{The Minimum Modulus Problem}\label{sec:minmod}

In this section we improve the bound on the minimum modulus given in~\cite[Theorem~1]{H}.

\begin{theorem}
Let\/ $\cA$ be a finite collection of arithmetic progressions with distinct moduli $d_1,\dots,d_k \ge 616000$. Then $\cA$ does not cover the integers. 
\end{theorem}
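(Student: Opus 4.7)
The plan is to apply Corollary~\ref{cor:gk} at $k = 51000$, for which Table~\ref{t:1} gives $g_{51000} \ge 5821999$ and for which $p_{51000} = 625187 > 616000$. Thus it suffices to produce, for every $\cA$ satisfying the hypothesis, a choice of parameters making $f_{51000}(\cA) \le g_{51000}$.

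First I would introduce a cutoff $i_0$ and set $\delta_i = 0$ for $i \le i_0$, so that $\nu(d) = 1$ for every $d \mid Q_{i_0}$. The first-moment bound of Theorem~\ref{prop:moments}, combined with Lemma~\ref{lem:B_i}, then yields
\[
 \mu_{i_0} \,\ge\, 1 - \sum_{i \le i_0} M^{(1)}_i
 \,\ge\, 1 - \sum_{d \in D_{i_0}} \frac{1}{d}
 \,\ge\, 1 - \sum_{\substack{d \ge M \\ d \in \cS(p_{i_0})}}\frac{1}{d},
\]
where $\cS(q)$ denotes the set of $q$-smooth positive integers. Taking $\kappa$ as in~\eqref{kappa:sufficient} (the worst case over possible prime factorizations of~$Q$) then gives the initial bound $f_{i_0}(\cA) \le \kappa/(1 - \Sigma(M, p_{i_0}))$, where $\Sigma(M,q) := \sum_{d \ge M,\, d \in \cS(q)} 1/d$. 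By Rankin's trick, or by partial summation against the Dickman function, this tail can be made small provided $p_{i_0}$ is moderately below~$M$.

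For $i_0 < i \le 51000$ I would choose the $\delta_i$ optimally via~\eqref{e:deltachoice} and iterate the recursion of Lemma~\ref{lem:f:recursion}. Since this recursion expresses $f_i$ as a monotone function of $f_{i-1}$ that depends only on $p_i$ (and not on $\cA$), it reduces the remainder of the argument to a deterministic numerical iteration: exactly the one used to populate Table~\ref{t:1}, but begun at step $i_0$ with the initial value above rather than at step~$3$. Choosing $i_0$ to balance the growth of $\kappa$ against the shrinkage of $\Sigma(M, p_{i_0})$ so that $f_{i_0} \le g_{i_0}$, one concludes by monotonicity of the recursion that $f_{51000} \le g_{51000}$, and invokes Corollary~\ref{cor:gk} to finish.

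The main obstacle is purely computational: finding a cutoff $i_0$ for which $\kappa$ and $(1 - \Sigma(M,p_{i_0}))^{-1}$ are simultaneously small enough, and then running the iteration through of order $5 \times 10^4$ primes with rounding controlled (as in the construction of Table~\ref{t:1}, where each $f_k$ is inflated by a factor $1+10^{-15}$ per step). There is no new conceptual input beyond the framework of Section~\ref{sec:general:method}; the theorem is essentially a quantitative repackaging of Corollary~\ref{cor:gk}, tuned to $M = 616000$.
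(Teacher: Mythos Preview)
Your plan uses the minimum-modulus hypothesis only once: to bound $\mu_{i_0}$ via the tail $\Sigma(M,p_{i_0})$ of $p_{i_0}$-smooth reciprocals. After that you pass to the generic recursion of Lemma~\ref{lem:f:recursion}, which does not see the condition $d\ge 616000$ at all. This is exactly the iteration used to generate the $g_k$, so your scheme succeeds if and only if there exists $i_0$ with
\[
 f_{i_0}=\frac{\prod_{j\le i_0}\bigl(1+a_j\bigr)}{1-\Sigma(616000,p_{i_0})}\le g_{i_0}.
\]
There is no such $i_0$. At $i_0=51$ the paper's own numbers give $f_{51}\le 886.56$, while interpolating Table~\ref{t:1} between $g_{10}$ and $g_{100}$ puts $g_{51}$ well below $600$; moving $i_0$ up or down does not help, since $\kappa$ grows like a power of $\log p_{i_0}$ while $1-\Sigma$ shrinks, and $g_{i_0}$ does not grow fast enough to compensate. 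The paper's final margin ($f_{51000}\le 5\,589\,593$ against $g_{51000}\ge 5\,821\,999$) is only about $4\%$, and that is achieved \emph{with} an additional device that your outline omits.

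That device is the point you are missing. For $51<i\le 51000$ the paper does \emph{not} feed the crude bound~\eqref{kappa:def:assumption} into the recursion; it replaces $M_i^{(2)}$ by the sharper
\[
 \hat M_i^{(2)}
 =\sum_{\substack{m_1p_i^{j},\,m_2p_i^{k}\ge 616000\\ m_1,m_2\ p_{i-1}\text{-smooth}}}
 p_i^{-j-k}\cdot\frac{\nu(\lcm(m_1,m_2))}{\lcm(m_1,m_2)},
\]
which incorporates the constraint $d\ge 616000$ at \emph{every} step, not just at the cutoff. This keeps $\mu_i$ substantially larger through the intermediate range where $p_i\ll 616000$, and is what allows $f_{51000}$ to land below $g_{51000}$. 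Computing $\hat M_i^{(2)}$ efficiently requires the auxiliary quantities $\Theta_i(s,t)$ and their recursion, which is genuine additional content. So your closing remark that ``there is no new conceptual input beyond the framework of Section~\ref{sec:general:method}'' is not quite right: the refinement of the second-moment bound and its computation are precisely what makes $616000$ attainable.
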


\begin{proof}
We apply Theorem~\ref{thm:general}, using the first moment $M^{(1)}_i$ and setting $\delta_i=0$ for $i\le 51$ (note that $p_{51} = 233$). After the first 51 primes we have
\begin{equation}\label{e:mmbound1}
 \mu_{51} \ge 1 - \sum_{\substack{d\ge 616000 \\ p_{51}\text{-smooth}}}\frac{1}{d} \, \ge \, 0.654258
\end{equation}
and 
$$f_{51} \le \frac{1}{\mu_{51}} \prod_{j \le 51} \bigg( 1 + \frac{3p_j-1}{(p_j-1)^2} \bigg) \le \, 886.56.$$ 
For $51 < i \le 51000$ we apply the second moment bound using
\begin{equation}\label{e:mmbound2}
M^{(2)}_i\le \hat M^{(2)}_i:= \sum_{\substack{m_1p_i^{j},m_2p_i^{k}\ge 616000 \\ m_1,m_2\ p_{i-1}\text{-smooth}}} p^{-j-k} \cdot\frac{\nu\big( \lcm(m_1,m_2) \big)}{\lcm(m_1,m_2)}.
\end{equation}
The values of $\delta_i$ were not optimized, but instead defined by the following equation
\[
\delta_i = \bigg( 1 - \frac{1}{\sqrt{p_i}} \bigg) \cdot \frac{1+a_i}{1+\sqrt{1+ 4\hat \mu_i a_i(1+a_i)/\hat M_i^{(2)}}},
\]
which is based on~\eqref{e:deltachoice}, but with $b_i f_{i-1}$ replaced by the bound on $M_i^{(2)}/(4\mu_i)$ implied by~\eqref{e:mmbound2}. The (rather arbitrary) factor of $(1-1/\sqrt{p_i})$ was included to improve the bounds obtained, and  $\hat\mu_i$ is the lower bound on $\mu_i$ defined inductively by
\[
\hat\mu_i= \hat\mu_{i-1} - \frac{\hat M_i^{(2)}}{4\delta_i(1-\delta_i)}.
\]
Finally, after processing $p_{51000} = 625187 > 616000$ we calculated the bound $f_{51000} \le 5589593$ from~\eqref{fk:def} using $i_0=0$ and $\kappa = 1$. This is less than the bound $g_{51000}$ given in Table 1, and hence, by Corollary~\ref{cor:gk}, the system $\cA$ does not cover the integers.

It only remains to describe an efficient way of calculating the expressions \eqref{e:mmbound1}
and~\eqref{e:mmbound2}. For~\eqref{e:mmbound1} we note that the sum of $1/d$ over all $p_i$-smooth
$d$ is given by the product
\[
 \sum_{d\ p_{i}\text{-smooth}}\frac{1}{d} \, = \, \prod_{j\le i} \bigg( 1 + \frac{1}{p_j - 1} \bigg)
\]
and the sum in~\eqref{e:mmbound1} can then be calculated by subtracting the finite sum of $1/d$
over all $p_i$-smooth $d<616000$. For~\eqref{e:mmbound2} the procedure is somewhat more complicated.
First we define
\[
 \Theta_i(s,t):=\sum_{\substack{m_1 \ge s,\,m_2 \ge t \\ m_1,m_2\ p_{i}\text{-smooth}}}\frac{\nu\big( \lcm(m_1,m_2) \big)}{\lcm(m_1,m_2)},
\]
which can be calculated inductively using the identity
\[
\Theta_i(s,t)=\Theta_{i-1}(s,t) + \frac{1}{1 - \delta_i} \sum_{j,k\ge0,\, j + k > 0} p_i^{-\max\{j,k\}} \cdot \Theta_{i-1}\big(\lceil s/p_i^j\rceil,\lceil t/p_i^k\rceil\big),
\]
which, despite its appearance, can be calculated as a finite sum. Indeed, $\lceil s/p_i^j\rceil=1$
for sufficiently large~$j$, and so there are only finitely many terms $\Theta_{i-1}(s',t')$ that occur,
and these are multiplied (when $s'$ or $t'=1$) by geometric series that can be summed exactly.
Finally, the calculation of
\[
 M^{(2)}_i\le \sum_{j,k\ge1} p_i^{-j-k} \cdot \Theta_{i-1}\big(\lceil K/p_i^j\rceil,\lceil K/p_i^k\rceil\big),
\]
where $K = 616000$, can similarly be reduced to a finite sum.
\end{proof}

\section{Schinzel's Conjecture}\label{Schinzel:sec}

In this section we will use the method of Section~\ref{sec:general:method} to prove Schinzel's Conjecture~\cite{Sch}, which we restate here for convenience.

\begin{theorem}
If\/ $1<d_1<d_2<\dots<d_k$ are the moduli of a finite collection of arithmetic progressions that covers the integers, then\/ $d_i\mid d_j$ for some\/ $i<j$.
\end{theorem}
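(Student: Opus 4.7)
The plan is to argue by contradiction via Corollary~\ref{cor:gk}: assume that $\cA = \{A_d : d \in D\}$ is a covering system whose distinct moduli form an antichain under divisibility (no $d \in D$ divides another), and aim to exhibit an $i_0$ for which $f_{i_0}(\cA) \le g_{i_0}$, using the precomputed values in Table~\ref{t:1}.

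I would run the sieve with $\delta_1 = \dots = \delta_{i_0} = 0$, so that $\nu \equiv 1$ on divisors of $Q_{i_0}$ and $f_{i_0} = \kappa/\mu_{i_0}$. The antichain hypothesis is then exploited in two complementary ways. First, by Lemma~\ref{lem:AP} and the union bound,
\[
 \mu_{i_0} \ge 1 - \sum_{d \in D_{i_0}} \frac{1}{d},
\]
and since $D_{i_0}$ (the set of $p_{i_0}$-smooth moduli) is an antichain of $p_{i_0}$-smooth positive integers, there are only finitely many possible configurations, each yielding a concrete lower bound on $\mu_{i_0}$. Second, whenever a prime $p$ itself lies in $D$, no multiple of $p$ can lie in $D$, so in particular $p^2 \nmid Q$. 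This permits the same refinement of~\eqref{kappa:sufficient} used in the proof of Theorem~\ref{thm:235}: truncating~\eqref{eq:sum:of:powers} at $t = 1$ for each such prime replaces the factor $1 + (3p-1)/(p-1)^2$ in $\kappa$ by $1 + 3/p$.

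I would then run a structural case analysis organised by the minimal $p_{i_0}$-smooth elements of $D$. If $D$ contains no multiple of $2$ or $3$, Theorem~\ref{thm:23} applies directly. Otherwise, sub-cases such as $2 \in D$ (forcing all other moduli odd, isolating the prime $2$ in $\kappa$, but contributing $1/2$ to $1 - \mu_1$), $3 \in D$, or small smooth composites like $4, 6, 9, 10, \dots$ being in $D$ each impose simultaneous constraints that both lower-bound $\mu_{i_0}$ (by forbidding many other moduli in $D_{i_0}$) and upper-bound $\kappa$ (by capping the multiplicity of small primes in $Q$). For each configuration one computes $f_{i_0}$ and compares with $g_{i_0}$ from Table~\ref{t:1}.

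The main obstacle will be the size and delicacy of this case analysis. The hardest configurations are those in which $D_{i_0}$ is rich enough to make $\mu_{i_0}$ small while simultaneously few small primes are isolated in $D$, so that the refinement of $\kappa$ is modest. Handling these likely requires pushing $i_0$ moderately large to exploit the rapid growth of $g_{i_0}$, and probably running the recursion with first-moment bounds $M_i^{(1)}$ for intermediate primes (as in Section~\ref{sec:minmod}) rather than relying solely on the $\delta_i = 0$ regime throughout. I would expect the final verification of the worst branches to be computer-assisted, in the style of the minimum-modulus proof.
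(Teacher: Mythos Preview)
Your framework is right, but the lever you propose for controlling $\kappa$ is too weak, and this is a genuine gap rather than a matter of case-work stamina. Consider the branch in which $D$ contains no prime (so your refinement ``$p\in D\Rightarrow p^2\nmid Q$'' is vacuous). With $i_0=3$ and $\delta_1=\delta_2=\delta_3=0$, the unrefined value from~\eqref{kappa:sufficient} is
\[
 \kappa=(1+5)\Big(1+\tfrac{8}{4}\Big)\Big(1+\tfrac{14}{16}\Big)=33.75,
\]
already far above $g_3<3.01$, so $f_3>g_3$ even if $\mu_3=1$. Knowing that composites such as $6,10,15$ lie in $D$ forbids their \emph{multiples} from~$D$, but does not bound the powers of $2,3,5$ in~$Q$ (for instance $6\in D$ is compatible with $2^{100}\cdot 7\in D$), so no truncation of~\eqref{eq:sum:of:powers} follows. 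Pushing $i_0$ higher does not rescue this: the first-moment lower bound on $\mu_{i_0}$ collapses once $\sum_{d\in D_{i_0}}1/d\ge 1$ (already $\{2,3,5\}$ gives $31/30$, and $\{p_1,\dots,p_{i_0}\}$ gives $\sim\log\log p_{i_0}$), while $\kappa$ only grows.

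The paper injects the antichain hypothesis in a different place and avoids any case split. A short minimality argument first shows one may assume $D$ contains \emph{no prime power}; a direct lemma then gives $\sum_{d\in D_3}1/d\le 1/3$ for any $5$-smooth antichain of non-prime-powers, so $\mu_3\ge 2/3$ uniformly. The decisive step is to factor each $d\in N_i$ as $a\cdot m\cdot p_i^{\,j}$ with $a$ $5$-smooth and $m$ having all prime factors in $(5,p_{i-1}]$, and to observe that for fixed $(m,j)$ the set $D(m,j)=\{a:amp_i^{\,j}\in D\}$ is itself a $5$-smooth antichain. A uniform combinatorial bound $\sum_{a\in A}\sum_{b\in B}1/\lcm(a,b)\le 1.7$ over all pairs of $5$-smooth antichains, fed into the first inequality of Theorem~\ref{prop:moments}, then yields~\eqref{kappa:def:assumption} with $\kappa=1.7$, hence $f_3\le 2.55<g_3$. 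The antichain structure enters the second-moment bound multiplicatively through the $5$-smooth factors of the moduli, not through constraints on the exponents in~$Q$; this is the idea your proposal is missing.
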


We will argue by contradiction, assuming that we have a set $\{d_1,\dots,d_k\}$ of moduli of a covering system of the integers that forms an antichain under divisibility. We call an antichain of natural numbers $p$-{\em smooth\/} if all its elements are $p$-smooth, i.e., have no prime factor greater than~$p$. In order to apply our sieve, we will need the following three simple lemmas about $5$-smooth antichains. 

\begin{lemma}\label{antichains:first}
If\/ $D$ is a $5$-smooth antichain containing no prime power, then 
$$\sum_{d \in D} \frac{1}{d} \, \le \, \frac13,$$ 
with equality if and only if\/ $D = \{6,10,15\}$.
\end{lemma}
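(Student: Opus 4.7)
The plan is to split $D$ based on divisibility by $5$. Writing $d = 2^a 3^b 5^c$, let $D^{(0)} := \{d \in D : c = 0\}$ and $D^{(+)} := \{d \in D : c \ge 1\}$. Each $d \in D^{(0)}$ has $a, b \ge 1$ (since $d$ is not a prime power), so $D^{(0)}$ is an antichain in $\{(a, b) : a, b \ge 1\}$; a standard two-dimensional staircase enumeration (listing elements by increasing $a$-coordinate so that the $b$-coordinates are strictly decreasing) gives $\sum_{d \in D^{(0)}} 1/d \le 1/6$, with equality iff $D^{(0)} = \{6\}$. Each $d \in D^{(+)}$ has $c \ge 1$ and $a + b \ge 1$, so after the substitution $c' := c - 1$ it yields an antichain in the region $R := \{(a, b, c) \in \mathbb{Z}_{\ge 0}^3 : a + b \ge 1\}$, with $\sum_{D^{(+)}} 1/d = (1/5) \sum_{\text{shifted}} 2^{-a} 3^{-b} 5^{-c}$.

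The heart of the argument is the following three-dimensional analogue: \emph{for any antichain $A \subseteq R$, one has $\sum_{(a, b, c) \in A} 2^{-a} 3^{-b} 5^{-c} \le 5/6$, with equality iff $A = \{(1, 0, 0), (0, 1, 0)\}$.} Given this, $\sum_{D^{(+)}} 1/d \le (1/5)(5/6) = 1/6$ with equality iff $D^{(+)} = \{10, 15\}$, and so $\sum_{d \in D} 1/d \le 1/6 + 1/6 = 1/3$. Combining the two equality cases forces $D = \{6, 10, 15\}$, and the required 3D compatibility (that $(1, 1, 0)$ is incomparable to both $(1, 0, 1)$ and $(0, 1, 1)$) is immediate.

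To prove the three-dimensional claim I would first establish a two-dimensional \emph{gap} result: for any antichain $T \subseteq \{(a, b) \in \mathbb{Z}_{\ge 0}^2 : a + b \ge 1\}$, $\sum_T 2^{-a} 3^{-b} \le 5/6$ with equality only at $T = \{(1, 0), (0, 1)\}$, and any other such antichain has $\sum_T \le 11/18 < 2/3$. This is a short case split on whether $(1, 0)$ and $(0, 1)$ belong to $T$. Then I induct on $|A|$: let $c^* = \min\{c : (a, b, c) \in A\}$ and write $A_{c^*}$ for the projected slice. If $A_{c^*} = \{(1, 0), (0, 1)\}$, any element with $c > c^*$ would require $(a, b) \not\ge (1, 0)$ and $(a, b) \not\ge (0, 1)$, hence $a = b = 0$, impossible in $R$; thus $A = A_{c^*}$ and $\sum A = 5^{-c^*}(5/6) \le 5/6$. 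Otherwise, the 2D gap gives $\sum_{A_{c^*}} 2^{-a} 3^{-b} \le 2/3$, while the elements with $c > c^*$ form (after shifting $c \mapsto c - c^* - 1$) a strictly smaller antichain in $R$, contributing at most $5^{-c^* - 1}(5/6) = 5^{-c^*}/6$ by the inductive hypothesis. Summing, $\sum A \le 5^{-c^*}(2/3 + 1/6) = 5^{-c^*}(5/6) \le 5/6$, as required.

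The main obstacle is securing the strict 2D gap: one must verify by a small enumeration that removing either of the ``corner'' points $(1, 0)$ or $(0, 1)$ from the antichain drops the maximum weighted sum all the way from $5/6$ down to at most $11/18$. That gap, combined with the $1/5$ factor coming from the next $c$-slice, is exactly what makes the 3D induction close at the sharp constant $5/6$, and in turn yields the tight bound $1/3$ with the unique extremum $D = \{6, 10, 15\}$.
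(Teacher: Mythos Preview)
Your proof is correct and follows essentially the same route as the paper: both slice $D$ by the power of~$5$, bound the $5$-free part by $1/6$ via compression, and for the remainder rely on the key 2D gap that a $3$-smooth antichain avoiding $1$ has weight at most $5/6$, achieved only at $\{2,3\}$, with every other such antichain at most $11/18$. The paper handles the $c\ge1$ part by an explicit case split on the slice $D_1$ followed by a geometric sum over higher powers of~$5$, which your inductive 3D lemma simply repackages; one small point is that in the ``otherwise'' branch of your induction you should invoke the actual gap $11/18$ (not merely $\le 2/3$) to get \emph{strict} inequality and hence the uniqueness of the extremal configuration, though your final paragraph shows you are aware of this.
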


\begin{proof}
Suppose first that $D'$ is a $3$-smooth antichain (possibly containing a prime power), and observe that 
$$D' = \big\{ 2^{a_1}3^{b_1},2^{a_2}3^{b_2},\dots,2^{a_k}3^{b_k} \big\}$$ 
for some $a_1>a_2>\dots>a_k$ and $b_1<b_2<\dots<b_k$. We claim that $\sum_{d \in D'} \frac{1}{d}$ is maximized when $D'$ is `compressed', that is, $a_i = a_{i+1} + 1$ and $b_{i+1} = b_i + 1$ for all~$i \in [k-1]$. Indeed, if $a_i > a_{i+1}+1$ then we can reduce $a_i$, and if $b_{i+1} > b_i+1$ then we can reduce $b_{i+1}$, in each case increasing $\sum_{d \in D'} \frac{1}{d}$ while maintaining the property that $D'$ is an antichain.

Now, let us write the 5-smooth antichain $D$ as a union of sets of the form $\{ 5^i d : d \in D_i \}$, where each $D_i$ is a (possibly empty) 3-smooth antichain. Since $D$ contains no prime power, neither can $D_0$, so $\sum_{d \in D_0} \frac{1}{d}$ is maximized when $D_0 = \{2^a\cdot3,2^{a-1}\cdot3^2,\dots,2\cdot3^a\}$. In this case a simple calculation shows that $\sum_{d \in D_0} \frac{1}{d} = 2^{-a} - 3^{-a}$, which attains a unique maximum when $a=1$. Thus $\sum_{d \in D_0} \frac{1}{d} \le \frac{1}{6}$, with equality if and only if $D_0 = \{6\}$.

Next, observe that $1 \not\in D_i$ for every~$i \ge 1$, and therefore $\sum_{d \in D_i} \frac{1}{d}$ is maximized when $D_i = \{2^a,2^{a-1}3^1,\dots,3^a\}$, with $a \ge 1$. In this case we have $\sum_{d \in D_i} \frac{1}{d} = 6(2^{-1-a}-3^{-1-a})$ and the unique extremal case is $a=1$, so $\sum_{d \in D_i} \frac{1}{d} \le \frac{5}{6}$, with equality if and only if $D_i = \{2,3\}$. If we additionally assume that $D_i \ne \{2,3\}$, then $\sum_{d \in D_i} \frac{1}{d} \le \frac{11}{18}$, with the unique maximum occurring for the (uncompressed) antichain $D_i = \{2,9\}$.

Finally, if $D_1 = \{2,3\}$ then $D_i = \emptyset$ for every $i > 1$, and therefore
$$\sum_{d \in D} \frac{1}{d} \, = \, \sum_{d \in D_0} \frac{1}{d} \, + \frac{1}{5}\sum_{d \in D_1} \frac{1}{d} \, \le \, \frac{1}{6} + \frac{1}{5}\cdot\frac{5}{6} \, = \, \frac{1}{3},$$ 
with equality only when $D = \{6,10,15\}$. On the other hand, if $D_1\ne \{2,3\}$ then
$$\sum_{d \in D} \frac{1}{d} \, = \, \sum_{i = 0}^\infty \frac{1}{5^i} \sum_{d \in D_i} \frac{1}{d} \, \le \, \frac{1}{6} + \frac{11}{5 \cdot 18} + \sum_{i = 2}^\infty \frac{1}{5^{i-1} \cdot 6} \, = \, \frac{119}{360}  \, < \, \frac{1}{3},$$ 
as required.
\end{proof}

\begin{lemma}\label{antichains:second}
If\/ $A$ and $B$ are two $3$-smooth antichains, then
\begin{equation}\label{eq:antichains:second}
\sum_{a \in A} \sum_{b \in B} \frac{1}{\lcm(a,b)} \le \,\frac{31}{36},
\end{equation}
except in the cases $A=B=\{1\}$ and $A=B=\{2,3\}$. 
\end{lemma}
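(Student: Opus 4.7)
My plan is to reduce the problem via a compression argument to evaluating a double sum over a canonical one‑parameter family of antichains, and then to treat the two borderline compressed cases separately, which will be the main obstacle.

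Write $A = \{2^{a_i} 3^{b_i} : 1 \le i \le s\}$ with $a_1 > \dots > a_s \ge 0$ and $0 \le b_1 < \dots < b_s$, and analogously $B = \{2^{c_j} 3^{d_j} : 1 \le j \le t\}$. Exactly as in the proof of Lemma~\ref{antichains:first}, any single-step reduction $a_i \mapsto a_i - 1$ (valid when $i = s$ with $a_s \ge 1$, or $i < s$ with $a_i \ge a_{i+1} + 2$), or the analogous move on some $b_i$, keeps $A$ an antichain and weakly decreases $\lcm(a, b)$ for every $b$, hence weakly increases the double sum $\sum_{a \in A, b \in B} 1/\lcm(a, b)$. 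Applying such moves exhaustively to both $A$ and $B$ produces the canonical compressed antichains $A_s := \{2^{s-i} 3^{i-1} : 1 \le i \le s\}$ and $A_t$, so it will suffice to bound $f(s, t) := \sum_{a \in A_s, b \in A_t} 1/\lcm(a, b)$.

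I will then compute $f(s, t)$ in closed form by splitting the double sum (for $s \le t$) into three pieces according to the signs of $(j - i) - (t - s)$ and of $i - j$. The outcome should be $f(s, t) = U(s) \cdot 2^{s-t} + V(s) \cdot 3^{s-t}$ with $U(s) = 21/2^s - 24/3^s + 3/6^s$ and $V(s) = 6/3^s - 9/2^s + 3/6^s$; in particular $f(s, s) = 12/2^s - 18/3^s + 6/6^s$, giving $f(1, 1) = 1$, $f(2, 2) = 7/6$, $f(3, 3) = 31/36$ and $f(s, s) < 12/2^s \le 3/4$ for $s \ge 4$. A short calculation (equivalent to the elementary inequality $27 \cdot 3^s - 48 \cdot 2^s + 21 > 0$ for all $s \ge 1$) will show that $f(s, t+1) < f(s, t)$ whenever $t \ge s$, and hence $f(s, t) \le f(\min(s, t), \min(s, t)) \le 31/36$ whenever $(s, t) \notin \{(1, 1), (2, 2)\}$.

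The main obstacle is that both compressed values $f(1, 1) = 1$ and $f(2, 2) = 7/6$ exceed $31/36$, so the full compression is too lossy in these two cases and each must be handled directly. The case $(s, t) = (1, 1)$ is immediate: for singletons $A = \{a\}$, $B = \{b\}$ we have $1/\lcm(a, b) \le 1/2$ unless $a = b = 1$. For $(s, t) = (2, 2)$, I will apply only the \emph{partial} compression that reduces $a_2, b_1, c_2, d_1$ to zero but leaves $a_1, b_2, c_1, d_2$ intact, yielding $A' = \{2^{a_1}, 3^{b_2}\}$ and $B' = \{2^{c_1}, 3^{d_2}\}$; the antichain conditions force $A' = \{2, 3\}$ iff $A = \{2, 3\}$, and similarly for $B$. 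Expanding
\[
 S(A', B') = \frac{1}{2^{\max(a_1, c_1)}} + \frac{1}{2^{a_1} \cdot 3^{d_2}} + \frac{1}{2^{c_1} \cdot 3^{b_2}} + \frac{1}{3^{\max(b_2, d_2)}},
\]
and splitting into three sub-cases according to whether $\max(a_1, c_1) \ge 2$ and/or $\max(b_2, d_2) \ge 2$, the crude estimates $1/2^x \le 1/2$ and $1/3^y \le 1/3$ (tightened to $1/4$ or $1/9$ when the exponent is forced to be $\ge 2$) should give $S(A', B') \le 5/6 < 31/36$ unless $a_1 = c_1 = b_2 = d_2 = 1$, which is precisely the excluded case $A = B = \{2, 3\}$.
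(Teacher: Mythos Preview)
Your argument is correct. Both you and the paper use the same compression idea to reduce to canonical antichains $A_s=\{2^{s-1},\dots,3^{s-1}\}$, and both treat the exceptional sizes $1$ and $2$ separately; but the execution differs. The paper disposes of the large-size case $|B|\ge 5$ by the crude inequality $\lcm(a,b)\ge b$, giving $\sum_{a,b}1/\lcm(a,b)\le |A|\sum_{b\in A_k}1/b=6k(2^{-k}-3^{-k})<31/36$, and then simply declares a finite hand-check over the six antichains $\{1\}$, $\{2,3\}$, $\{2,9\}$, $\{3,4\}$, $\{4,6,9\}$, $\{8,12,18,27\}$ (the last two of these being the fully compressed sizes $3$ and $4$, and $\{2,9\}$, $\{3,4\}$ being the ``one step from compressed'' size-$2$ antichains needed to handle the borderline $(2,2)$ case). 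Your approach replaces both of these steps with something sharper: you obtain the closed form $f(s,t)=U(s)2^{s-t}+V(s)3^{s-t}$ and the monotonicity $f(s,t+1)<f(s,t)$, which reduces everything to the diagonal values $f(s,s)=12/2^s-18/3^s+6/6^s$; and for the two overshooting diagonals you use a clean partial compression to $\{2^{a_1},3^{b_2}\}$ followed by a four-term estimate. Your route is more explicit (the equality $f(3,3)=31/36$ drops out, and no list of cross-pairs has to be checked), at the cost of deriving and verifying the closed form; the paper's route is quicker but leaves more to an unwritten case analysis.
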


Note that the sum in~\eqref{eq:antichains:second} is equal to $1$ if $A=B=\{1\}$, and to $7/6$ if $A=B=\{ 2,3 \}$.

\begin{proof}
Suppose first that $|A| \le |B|$, and that $B = \{2^{k-1},2^{k-2}3^1,\dots,3^{k-1}\}$ for some $k \ge 5$. Then
$$\sum_{a \in A} \sum_{b \in B} \frac{1}{\lcm(a,b)} \, \le \, \sum_{a \in A} \sum_{b \in B} \frac{1}{b} \, = \, |A| \sum_{b \in B} \frac{1}{b} \, = \, 6k \big( 2^{-k} - 3^{-k} \big) \, < \, \frac{31}{36}.$$
However, if $|B| = k \ge 5$ but $B \ne \{2^{k-1},2^{k-2}3^1,\dots,3^{k-1}\}$, then by compressing (as in the proof of Lemma~\ref{antichains:first}) we may increase the left-hand side of~\eqref{eq:antichains:second}, so we are also done in this case. The lemma therefore reduces to a finite check of families with $\max\{ |A|,|B| \} \le 4$, and in fact (using compression once again) it is sufficient to consider the antichains $\{1\}$, $\{2,3\}$, $\{2,9\}$, $\{3,4\}$, $\{4,6,9\}$, and $\{8,12,18,27\}$. The lemma now follows from a trivial case analysis, which can be done by hand.
\end{proof}

\begin{lemma}\label{antichains:third}
 If\/ $A$ and $B$ are two $5$-smooth antichains, then 
 $$\sum_{a \in A} \sum_{b \in B} \frac{1}{\lcm(a,b)} \le \,1.7,$$ 
with equality if and only if $A=B=\{2,3,5\}$.
\end{lemma}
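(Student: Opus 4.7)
The plan is to reduce the bilinear estimate to a diagonal one via Cauchy--Schwarz, and then prove the diagonal bound by decomposing the antichain according to the 5-adic valuation. Writing $\gcd(a,b)=\sum_{d\mid\gcd(a,b)}\phi(d)$ (with $\phi$ Euler's totient) yields
\[
 T(X,Y):=\sum_{x\in X}\sum_{y\in Y}\frac{1}{\lcm(x,y)}=\sum_{d\ge1}\phi(d)\,f_X(d)\,f_Y(d),\qquad f_X(d):=\sum_{\substack{x\in X\\ d\mid x}}\frac{1}{x},
\]
which exhibits $T(\cdot,\cdot)$ as a positive-definite inner product, so Cauchy--Schwarz yields $T(A,B)^2\le T(A,A)\,T(B,B)$. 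It therefore suffices to prove that $T(A,A)\le 17/10$ for every 5-smooth antichain $A$, with equality iff $A=\{2,3,5\}$; the equality case of the lemma then follows, because equality forces $T(A,A)=T(B,B)=17/10$, hence $A=B=\{2,3,5\}$.

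For the diagonal bound I would decompose $A=\bigcup_{i\ge 0}5^iA_i$, where each $A_i$ is a 3-smooth antichain, so that
\[
 T(A,A)=\sum_{i,j\ge 0}5^{-\max(i,j)}\,S_3(A_i,A_j),
\]
with $S_3$ the 3-smooth sum controlled by Lemma~\ref{antichains:second}. The crucial consequence of $A=B$ is that the two exceptional pairs of Lemma~\ref{antichains:second} cannot appear off the diagonal: $A_i=A_j=\{1\}$ or $A_i=A_j=\{2,3\}$ with $i<j$ would give $5^i\mid 5^j$ or $2\cdot 5^i\mid 2\cdot 5^j$, contradicting the antichain property. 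Hence $S_3(A_i,A_j)\le 31/36$ whenever $i\ne j$.

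The proof is completed by induction on $i^{*}(A):=\max\{i:A_i\ne\emptyset\}$, with a case analysis on $A_0$. If $A_0=\emptyset$ (and $A\ne\emptyset$), then $A=5A'$ for a 5-smooth antichain $A'$ with $i^{*}(A')=i^{*}(A)-1$, and $T(A,A)=T(A',A')/5\le 17/50$ by induction. If $A_0=\{1\}$, then $A=\{1\}$ and $T=1$. If $A_0=\{2,3\}$, the antichain condition forces each $A_i$ for $i\ge 1$ to be either empty or $\{1\}$, with at most one nonempty such layer, so $A=\{2,3\}$ or $A=\{2,3,5^k\}$; a direct computation in the latter case gives $T(A,A)=\tfrac{7}{6}+\tfrac{8}{3}\,5^{-k}$, uniquely maximized at $k=1$ with value $17/10$. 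In the remaining case ($A_0$ a 3-smooth antichain distinct from $\{1\}$ and $\{2,3\}$), $S_3(A_0,A_0)\le 31/36$, and combining $S_3(A_i,A_i)\le 7/6$, $S_3(A_i,A_j)\le 31/36$ for $i\ne j$, and the geometric sums $\sum_{j\ge 1}5^{-j}=1/4$ and $\sum_{1\le i<j}5^{-j}=1/16$ gives
\[
 T(A,A)\le\frac{31}{36}\Bigl(1+\frac{1}{2}+\frac{1}{8}\Bigr)+\frac{7}{6}\cdot\frac{1}{4}=\frac{487}{288}<\frac{17}{10}.
\]

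The only delicate point is that the margin $\tfrac{17}{10}-\tfrac{487}{288}=\tfrac{13}{1440}$ in the last case is comfortably but not enormously positive; since the estimate is deliberately crude (in practice the antichain constraints prevent $S_3(A_i,A_i)=7/6$ from being attained for more than one $i\ge 1$ and force $S_3(A_0,A_0)$ well below $31/36$ when higher exceptional layers exist), this is strictly positive. The main structural insight doing the work is the observation in the second paragraph: once the exceptional 3-smooth pairs are confined to the diagonal when $A=B$, the rest is routine bookkeeping together with the single nontrivial evaluation $T(\{2,3,5^k\},\{2,3,5^k\})=\tfrac{7}{6}+\tfrac{8}{3}\,5^{-k}$.
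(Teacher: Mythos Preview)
Your argument is correct and takes a genuinely different route from the paper. The paper bounds $T(A,B)$ directly: it decomposes both $A$ and $B$ by $5$-adic valuation into layers $A_i$, $B_j$, observes that an exceptional pair $A_i=B_j=\{1\}$ (or $=\{2,3\}$) forces the higher layers of \emph{both} $A$ and $B$ to be essentially empty, and then does a case analysis on which exceptional pairs occur and at what level. Your key new idea is the Cauchy--Schwarz reduction via $\gcd(a,b)=\sum_{d\mid\gcd(a,b)}\phi(d)$, which exhibits $T$ as a positive-definite inner product and reduces the problem to the diagonal $T(A,A)$. This buys you a real simplification: once $A=B$, the two exceptional configurations of Lemma~\ref{antichains:second} are automatically confined to the diagonal $i=j$ by the antichain condition, so the off-diagonal terms are uniformly bounded by $31/36$ and the subsequent bookkeeping is shorter (your single estimate $487/288<17/10$ replaces the paper's two-stage ``baseline plus correction'' argument). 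The paper's direct approach, on the other hand, avoids the auxiliary identity and the induction on $i^*(A)$, and gives slightly more information about near-extremal \emph{asymmetric} pairs $(A,B)$; but for the stated lemma your reduction is cleaner. The equality case is handled correctly in both: in your version, equality in Cauchy--Schwarz together with $T(A,A)=T(B,B)=17/10$ forces $A=B=\{2,3,5\}$ via the uniqueness in the diagonal analysis.
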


\begin{proof}
We decompose $A$ and $B$ as a union of sets $5^i\cdot A_i$ and $5^j\cdot B_j$, where $A_i$ and $B_j$ are 3-smooth antichains, as in the proof of Lemma~\ref{antichains:first}. Suppose first that there is no pair $(i,j)$ with $A_i = B_j = \{1\}$ or $A_i = B_j = \{2,3\}$. Then, by Lemma~\ref{antichains:second} and~\eqref{eq:sum:odds:over:powers}, we have 
\begin{align*}
\sum_{a \in A} \sum_{b \in B} \frac{1}{\lcm(a,b)} & \, = \, \sum_{i = 0}^\infty \sum_{j = 0}^\infty \frac{1}{5^{\max\{i,j\}}} \sum_{a \in A_i} \sum_{b \in B_j} \frac{1}{\lcm(a,b)}\\
& \, \le \, \frac{31}{36} \bigg( 1 + \frac{3}{5} + \frac{5}{5^2} + \frac{7}{5^3} + \cdots \bigg) = \frac{31}{36} \cdot \frac{15}{8} < 1.7 - \frac{1}{12}.
\end{align*}
Next, suppose that $A_i = B_j = \{1\}$ for some pair $(i,j)$, and observe that $A_{i'} = B_{j'} = \emptyset$ for every $i' > i$ and $j' > j$, so the pair $(i,j)$ is unique. If there is no pair $(s,t)$ with $A_s = B_t = \{2,3\}$, then the bound above increases by at most $(1 - \tfrac{31}{36})5^{-\max\{i,j\}}$, and this is less that $\frac{1}{12}$ if $\max\{i,j\}\ge1$. On the other hand, if $A_0 = B_0 = \{1\}$, then (since $A_i = B_i = \emptyset$ for all $i > 0$) we have $A = B = \{1\}$, and so $\sum_{a \in A} \sum_{b \in B} \frac{1}{\lcm(a,b)} = 1 < 1.7$.

We may therefore assume that $A_i = B_j = \{2,3\}$ for some pair $(i,j)$, which implies that $A_{i'},B_{j'} \subseteq \{1\}$ for every $i' > i$ and $j' > j$, and (as above) at most one of the sets in each sequence is non-empty. The bound above increases by at most 
\[
 \bigg( \frac{7}{6} - \frac{31}{36} \bigg) \frac{1}{5^{\max\{i,j\}}} + \bigg( 1 - \frac{31}{36} \bigg) \frac{1}{5^{\max\{i,j\} + 1}} = \frac{1}{3} \cdot \frac{1}{5^{\max\{i,j\}}} < \frac{1}{12}
\]
if $\max\{i,j\} \ge 1$. However, if $A_0 = B_0 = \{2,3\}$, then it is easy to see that $\sum_{a \in A} \sum_{b \in B} \frac{1}{\lcm(a,b)}$ is maximized by taking $A = B = \{2,3,5\}$, and in that case it is equal to $1.7$.
\end{proof}

Having completed the easy preliminaries, we are ready to prove Schinzel's conjecture.

\begin{proof}[Proof of Theorem~\ref{thm:Schinzel}.]
We first observe that we may assume that none of the moduli $d_i$ are prime powers. Indeed, we
may assume that the covering is minimal, so the removal of any $A_{d_i}$ results in a
set of progressions that do not cover $\Z$.
If $d_i=p^j$ for some prime $p$ and $j>0$, then the prime can appear at most to the $(j-1)$st
power in any other moduli. Thus the other progressions fail to cover some congruence class
mod $Q/p$, where $Q=\lcm\{d_1,\dots,d_k\}$. But this congruence class cannot be covered
by $A_{d_i}$ as $d_i\nmid Q/p$, a contradiction.

We now apply our sieve with $\delta_1 = \delta_2 = \delta_3 = 0$, so that $\Pr_i$ is equal to the uniform measure when processing the primes $p_1 = 2$, $p_2 = 3$ and $p_3 = 5$, and claim that $f_3 \le 2.55 < g_3$. Observe that, by Lemma~\ref{antichains:first}, the total measure of $B_1 \cup B_2 \cup B_3$ is at most $1/3$. Now we improve the bound on $\E[\alpha_i(x)^2]$ for $i \ge 4$ as follows. By Theorem~\ref{prop:moments}, we have 
$$M_i^{(2)} \le \sum_{j_1,j_2 \ge 1} \frac{1}{p_i^{j_1+j_2}} \sum_{m_1,m_2\in S_i}\frac{\nu\big( \lcm(m_1,m_2) \big)}{\lcm(m_1,m_2)} \sum_{a \in D(m_1,j_1)} \sum_{b \in D(m_2,j_2)} \frac{1}{\lcm(a,b)},$$
where $S_i$ is the set of integers whose prime factors all lie between $7$ and $p_{i-1}$,
and for each $m \in S_i$ and $j \ge 1$, we define 
$$D(m,j) = \big\{ a : m p_i^j a \in D \text{ and $a$ is 5-smooth} \big\}.$$ 
Since $D(m,j)$ is a 5-smooth antichain, it follows from Lemmas~\ref{lem:sums} and~\ref{antichains:third} that
$$M_i^{(2)} \le 1.7 \sum_{j_1,j_2\ge1}\frac{1}{p_i^{j_1+j_2}}\sum_{m_1,m_2\in S_i}\frac{\nu\big( \lcm(m_1,m_2) \big)}{\lcm(m_1,m_2)} \, = \, \frac{1.7}{(p_i-1)^2}\prod_{j=4}^{i-1}\bigg( 1 + \frac{3p_j-1}{(1-\delta_j)(p_j-1)^2} \bigg).$$
Therefore, setting $i_0 = 3$ and $\kappa = 1.7$, it follows that~\eqref{kappa:def:assumption} holds. Hence, recalling from above that $\mu_3 \ge 2/3$, we obtain $f_3 = 1.7 \cdot 3/2 = 2.55 < g_3$ (see Table~\ref{t:1}), so, by Corollary~\ref{cor:gk}, the system $\cA = \{A_d : d \in D\}$ does not cover the integers.
\end{proof}

\section{Constructions}\label{sec:construc}

In this section we will provide constructions of families of arithmetic progressions that answer
(negatively) the question of Filaseta, Ford, Konyagin, Pomerance and Yu~\cite{FFKPY} mentioned
in the Introduction, and show that Theorem~\ref{thm:uncoveredDensity} is not far from best possible.
To be precise, we will prove the following two theorems.

\begin{theorem}\label{thm:CEtoFFKPYQuestion}
 For every\/ $M > 0$ and\/ $\eps > 0$, there exists a finite collection of arithmetic progressions\/
 $A_1, \dots, A_k$ with distinct moduli\/ $d_1,\dots,d_k\ge M$, such that
 \begin{equation}\label{eq:sum:less:than:one}
  \sum_{i=1}^k\frac{1}{d_i} < 1
 \end{equation}
 and the density of the uncovered set\/ $R = \Z \setminus \bigcup_{i=1}^k A_i$ is less than\/~$\eps$.
\end{theorem}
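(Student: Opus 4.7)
The plan is to adapt the classical ``$p=2$ chain'' to handle arbitrary minimum modulus~$M$. Recall that the $p=2$ chain of length $n$ consists of the arithmetic progressions $a_i + 2^i\Z$ for $i=1,\dots,n$, with residues $a_i$ chosen greedily so that the AP introduced at step~$i$ lies inside the uncovered set after step $i-1$. The resulting APs are pairwise disjoint, have distinct moduli $2, 4, \dots, 2^n$, reciprocal sum $1 - 2^{-n} < 1$, and leave a single residue class mod $2^n$ uncovered, of density $2^{-n}$. This settles the theorem for $M \le 2$ by taking $n$ large enough that $2^{-n} < \eps$.

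For larger $M$, the plan is to \emph{refine} each AP of the chain whose modulus is less than~$M$. For each $i$ with $2^i < M$, the progression $A_i = a_i + 2^i\Z$ is isomorphic to $\Z$ via the affine bijection $x \mapsto (x - a_i)/2^i$, so the task of covering most of $A_i$ by APs of modulus $\ge M$ is a scaled instance of the same problem, with minimum modulus $M/2^i$ and an appropriately chosen error $\eps_i$. A recursive application of the construction provides such a sub-family. The APs that originally had modulus $\ge M$ are left unrefined.

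Concretely, I would arrange the recursion so that each AP introduced at refinement level~$i$ has modulus of the form $2^i \cdot q$ with $q$ odd; since the $2$-adic valuation identifies the level, this tagging guarantees that moduli introduced at distinct levels are automatically distinct, and within a single level distinctness is ensured by the recursive hypothesis. The total reciprocal sum is bounded by $1 - 2^{-n}$ (the original chain sum), since each refinement of an AP of reciprocal $1/2^i$ contributes at most $1/2^i$ to the total. The total uncovered density is bounded by $2^{-n} + \sum_{i:\, 2^i < M}\eps_i/2^i$, and by choosing $n$ large and $\eps_i$ decaying geometrically, both contributions can be driven below $\eps/2$, so the final uncovered density is below~$\eps$.

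The main obstacle is a careful bookkeeping through the recursion: ensuring global distinctness of moduli across all levels, and propagating the bounds on sum and uncovered density through the (finitely many) levels of refinement without accumulating error. The parity/level-tagging trick handles distinctness, and the geometric decay of errors handles the accounting. The base case of the recursion is the $p=2$ chain itself (or a trivial construction when the scaled $M'$ drops below~$2$).
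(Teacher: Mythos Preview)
Your high-level plan---start from the $p=2$ chain and recursively refine the progressions whose moduli are too small---is natural, but the distinctness argument has a genuine gap.  You assert that you ``would arrange the recursion so that each AP introduced at refinement level~$i$ has modulus of the form $2^i\cdot q$ with $q$ odd,'' and then rely on the $2$-adic valuation to separate levels.  But the recursion you actually describe is a \emph{recursive application of the same construction}, whose top layer is again a $p=2$ chain.  Unwinding once: to refine $A_1$ (modulus $2$) you would place inside it a $p=2$ chain, giving moduli $2\cdot 2,\,2\cdot 4,\dots=4,8,\dots$ in the original $\Z$; these collide with the unrefined moduli $4,8,\dots$ of the outer chain.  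More generally, every modulus produced by any depth of this recursion is a pure power of~$2$, so the ``$q$ odd'' tagging is never achieved.

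To rescue the tagging you would need, for each $i$ with $2^i<M$, a sub-construction inside $A_i$ that uses \emph{only odd} moduli, with distinct moduli, reciprocal sum $<1$, and uncovered density $<\eps_i$.  A single odd-prime chain does not suffice: the $p$-chain with $p\ge3$ and disjoint APs covers density at most $\sum_{j\ge1}p^{-j}=1/(p-1)\le1/2$, so the uncovered fraction is bounded away from~$0$.  Producing an odd-modulus family with uncovered density $\to0$ and reciprocal sum $<1$ is essentially a variant of the theorem itself, and you have not indicated how to obtain it.  Without it, neither the distinctness bookkeeping nor the uncovered-density estimate goes through.

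For comparison, the paper avoids recursion entirely.  It picks disjoint finite sets of primes $P_1,\dots,P_N$ (all $\ge M$) with $\sum_{p\in P_j}1/(pQ_{j-1})$ in a prescribed narrow window, where $Q_{j-1}=\prod_{\ell<j}\prod_{p\in P_\ell}p$, and takes $D_j=\{p\cdot Q_{j-1}:p\in P_j\}$.  Each modulus is determined by its unique ``new'' prime~$p$, so distinctness is automatic; a greedy choice of residues then drives the uncovered density down geometrically while a direct telescoping bound keeps $\sum_{d}1/d$ just above~$1$, after which a trimming step brings it below.
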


Note that the bound~\eqref{eq:sum:less:than:one} is (obviously) best possible; we remark that the
moduli in our construction will moreover be square-free. Our second theorem shows that the function
$(\log p)^{3+\eps}$ in the statement of Theorem~\ref{thm:uncoveredDensity} cannot be replaced by a constant.

\begin{theorem}\label{thm:ThmAlmostSharp}
 For every\/ $\lambda > 0$, there exists\/ $C = C(\lambda) > 0$ such that the following holds.
 Let\/ $\mu$ be the multiplicative function defined by
 \[
 \mu(p^i) = 1 + \frac{\lambda}{p}
 \]
 for all primes\/ $p$ and integers\/~$i \ge 1$. For every\/ $M > 0$ and\/ $\eps > 0$, there exists
 a finite collection of arithmetic progressions\/ $A_1, \dots, A_k$ with distinct square-free
 moduli\/ $d_1,\dots,d_k\ge M$, such that
 \[
  \sum_{i=1}^k \frac{\mu(d_i)}{d_i} \le C,
 \]
 and the density of the uncovered set\/ $R = \Z \setminus \bigcup_{i=1}^k A_i$ is at most\/~$\eps$.
\end{theorem}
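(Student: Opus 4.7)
The approach is to build directly on Theorem~\ref{thm:CEtoFFKPYQuestion}, which provides for any $M$ and $\eps$ a finite collection of arithmetic progressions with distinct \emph{square-free} moduli $d_i \ge M$, $\sum_i 1/d_i < 1$, and uncovered density less than $\eps$. The square-freeness, the lower bound on moduli, and the uncovered density bound are all automatically satisfied by this construction, so the only remaining task is to verify the additional bound $\sum_i \mu(d_i)/d_i \le C(\lambda)$ with $C(\lambda)$ independent of $M$ and $\eps$.

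The key step is to examine the explicit construction of Theorem~\ref{thm:CEtoFFKPYQuestion} and verify that every prime dividing any modulus lies in a bounded range of the form $[M, M^L]$, where $L$ is an absolute constant independent of both $M$ and $\eps$. Such a property is natural for Erd\H{o}s-style near-exact covering systems built from products of primes of comparable magnitude via the Chinese Remainder Theorem, and I would verify it directly from the construction given in Section~\ref{sec:construc}.

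Given this bounded-range property, for each modulus $d_i$ a weak form of Mertens' theorem yields
\[
\mu(d_i) = \prod_{p \mid d_i}\!\Big(1+\frac{\lambda}{p}\Big) \,\le\, \exp\!\Big(\lambda\!\!\!\sum_{M \le p \le M^L}\!\!\!\frac{1}{p}\Big) \,\le\, \exp\!\bigl(\lambda (\log L + o(1))\bigr) \,\le\, 2L^\lambda
\]
for $M$ sufficiently large, and consequently
\[
\sum_i \frac{\mu(d_i)}{d_i} \,\le\, 2L^\lambda \sum_i \frac{1}{d_i} \,<\, 2L^\lambda \,=:\, C(\lambda),
\]
with a minor adjustment of $C(\lambda)$ absorbing the small values of $M$ where the $o(1)$ term is not yet negligible.

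The main obstacle will be verifying the bounded-range property for the construction of Theorem~\ref{thm:CEtoFFKPYQuestion}. If the efficient-coverage argument intrinsically requires primes spanning a range that grows as $\eps \to 0$, one must either modify the construction to enforce $p \le M^L$ for a fixed $L$ (possibly at the cost of a modestly larger $C(\lambda)$), or bound $\sum_i \mu(d_i)/d_i$ directly via the expansion $\mu(d)/d = d^{-1} \sum_{e \mid d} \lambda^{\omega(e)}/e$ together with uniform bounds of the shape $\sum_{d \in \mathcal{D}, \, e \mid d} 1/d \le O(1/e)$; the latter should hold under mild structural assumptions ensuring that no small divisor $e$ appears in disproportionately many moduli of the construction.
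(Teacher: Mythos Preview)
Your proposal has a genuine gap: the bounded-range property you hope to verify for the construction of Theorem~\ref{thm:CEtoFFKPYQuestion} is false, and in fact $\sum_i \mu(d_i)/d_i$ is unbounded for that construction.

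Recall how the sets $P_1,\dots,P_N$ are built there. One has $Q_j = \prod_{i\le j}\prod_{p\in P_i} p$, and $P_{j+1}$ is chosen so that $\sum_{p\in P_{j+1}} 1/p \ge \delta e^{-\delta j} Q_j$. Since $Q_1 \ge M$, already $\sum_{p\in P_2} 1/p \ge \delta e^{-\delta} M$. By Mertens this forces the primes in $P_2$ to range up to roughly $\exp(\exp(\delta e^{-\delta} M))$, not $M^L$ for any fixed~$L$. Worse, the moduli in $D_3$ are of the form $p\cdot Q_2$ with $p\in P_3$, and
\[
\sum_{d\in D_3}\frac{\mu(d)}{d}
\;\ge\;\frac{\mu(Q_2)}{Q_2}\sum_{p\in P_3}\frac{1}{p}
\;\ge\;\mu(Q_2)\,\delta e^{-2\delta},
\]
while
\[
\mu(Q_2)\;\ge\;\prod_{p\in P_2}\Big(1+\frac{\lambda}{p}\Big)
\;\ge\;\exp\Big(\tfrac{\lambda}{2}\sum_{p\in P_2}\tfrac{1}{p}\Big)
\;\ge\;\exp\big(\tfrac{\lambda\delta e^{-\delta}}{2}\,M\big),
\]
which tends to infinity with~$M$. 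So no constant $C(\lambda)$ can bound $\sum \mu(d_i)/d_i$ for this family, and neither of your fallback ideas can rescue this: the obstruction is not a distributional subtlety but simply that each modulus from level $3$ onward is divisible by $Q_2$, and $\mu(Q_2)$ itself is astronomically large.

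The paper's proof therefore uses a genuinely different construction. Instead of a single chain $P_1,P_2,\dots$ with $\sum_{p\in P_j}1/p$ growing like $Q_{j-1}$, it builds a \emph{tree}: for each surviving residue $x_{i-1}$ at level $i-1$ it chooses a fresh set of primes with $\prod_p(1+1/p)$ bounded between $t-1$ and~$t$, and uses \emph{all} divisors of their product (times the accumulated $Q$'s along the path). The boundedness of $\prod(1+1/p)$ at each node is exactly what keeps $\mu$ of the accumulated product under control (it grows only like $t^{\lambda i}$ along a path of length~$i$), while the tree branching compensates for the fact that each batch of primes is now small. The density and the $\mu$-sum are then bounded by geometric series in~$i$, using $t^\lambda < e^{t-3}$.
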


The proof of Theorem~\ref{thm:CEtoFFKPYQuestion} is relatively simple, while the proof of
Theorem~\ref{thm:ThmAlmostSharp} will require somewhat more work.

\begin{proof}[Proof of Theorem~\ref{thm:CEtoFFKPYQuestion}]
We will choose a collection $P_1, \dots, P_N$ of disjoint sets of primes, and define
\[
 Q_i := \prod_{j \le i} \prod_{p\in P_j}  p \qquad \text{and} \qquad
 D_i := \big\{ p \cdot Q_{i-1} : p \in P_i \big\}
\]
for each $i \in [N]$, where $Q_0 := 1$. We will show that, for a suitable choice of $P_1, \dots, P_N$,
the set $D = D_1 \cup \cdots \cup D_N$ has the following properties:
\begin{equation}\label{eq:sum:of:ds:almost:one}
 \sum_{d \in D} \frac{1}{d} \le 1 + \frac{\eps}{3},
\end{equation}
and there exists a collection of arithmetic progressions, with distinct moduli in~$D$,
such that the uncovered set has density at most $\eps / 3$. By removing a few of the progressions
from this family, we will obtain the claimed collection $A_1, \dots, A_k$.

We construct the sets $P_1, \dots, P_N$ of primes as follows. First, let us fix some positive
constants $c_0$, $c$ and $\delta$ such that
\[
 c_0 := 1 + \frac{\eps}{3}, \qquad \text{and} \qquad c=\frac{\delta}{1-e^{-\delta}}\in(1,c_0).
\]
Indeed, $\delta/(1-e^{-\delta})$ is a continuous increasing function of $\delta$ which tends to 1 as $\delta\to0$,
so for sufficiently small $\delta$ we have $1<\delta/(1-e^{-\delta})<c_0$.
Assume (without loss) that $M > 3/\eps$, and choose $N$ sufficiently large so that $e^{-\delta N} < \eps/3$.
Now let $P_1$ be any set of primes such that $p \ge M$ for every $p \in P_1$, and
\[
 \delta \le \sum_{p \in P_1} \frac{1}{p} \le \delta + \frac{c_0 - c}{N}.
\]
In general, if we have already constructed $P_1,\dots,P_j$, then let $P_{j+1}$ be any set of primes
(disjoint from $P_1 \cup \cdots \cup P_j$) such that $p \ge M$ for every $p \in P_{j+1}$, and
\begin{equation}\label{eq:choosing:Pjs}
 \delta e^{-\delta j} \le \sum_{p \in P_{j+1}} \frac{1}{p \cdot Q_j}
 \le \delta e^{-\delta j} + \frac{c_0 - c}{N}.
\end{equation}
As the sum $\sum 1/p$ over prime $p$ diverges, it is clear that sets $P_1,\dots,P_N$ exist
with these properties.  
It follows that
\[
 \sum_{d \in D} \frac{1}{d} = \sum_{j = 0}^{N-1} \sum_{p \in P_{j+1}} \frac{1}{p \cdot Q_j}
 \le \sum_{j = 0}^{N-1} \bigg( \delta e^{-\delta j} + \frac{c_0-c}{N} \bigg)
 \le c_0 = 1 + \frac{\eps}{3},
\]
where in the final inequality we used the identity
$\sum_{j = 0}^\infty \delta e^{-\delta j}  = \delta \big( 1 - e^{-\delta} \big)^{-1} = c$.

Now, to construct the arithmetic progressions, simply choose (for each $d \in D$ in turn) any
arithmetic progression with modulus $d$ that has at least the expected intersection with the
(as yet) uncovered set. To be more precise, for each $j \in [N]$ let $\cA_j$ denote the
collection of arithmetic progressions whose modulus lies in $D_j$, and write $\eps_j :=\Pr_0(R_j)$
for the density of the uncovered set $R_j := \Z_{Q_j} \setminus \bigcup_{A \in \cB_j} A$, where
$\cB_j := \cA_1\cup \dots\cup \cA_j$. Now observe that if $d = p \cdot Q_j \in D_{j+1}$,
then there are $p \cdot |R_j|$ congruence classes mod $d$ that completely cover~$R_j$.
Hence there is a congruence class that covers at least a fraction $1/(p \cdot |R_j|)$ of the
as yet uncovered set. It follows that
\[
 \eps_{j+1} \le \eps_j \prod_{p \in P_{j+1}} \bigg( 1 - \frac{1}{p \cdot |R_j|}\bigg)
 \le \eps_j \cdot \exp\bigg( - \frac{1}{|R_j|} \sum_{p \in P_{j+1}} \frac{1}{p}\bigg)
\]
for each $0 \le j \le N - 1$ (where $\eps_0 := 1$), and hence, by~\eqref{eq:choosing:Pjs},
\[
 \eps_{j+1} \le \eps_j \cdot \exp\bigg( - \frac{Q_j}{|R_j|} \cdot \delta e^{-\delta j} \bigg)
 = \eps_j \cdot \exp\bigg( - \frac{\delta e^{-\delta j}}{\eps_j} \bigg).
\]
It now follows immediately by induction that $\eps_j \le e^{-\delta j}$ for every $j \in [N]$,
and in particular $\eps_N \le e^{-\delta N} \le \eps / 3$, by our choice of~$N$.

We have therefore constructed a collection of arithmetic progressions whose set $D$ of
(distinct) moduli satisfies~\eqref{eq:sum:of:ds:almost:one}, and whose uncovered
set has density at most $\eps / 3$. To complete the construction, simply choose a maximal
subset $D' \subset D$ such that $\sum_{d \in D'} \frac{1}{d} < 1$, and observe that the density
of the set uncovered by $\big\{ A_d : d \in D' \big\}$ is at most
\[
 \frac{\eps}{3} + \sum_{d \in D \setminus D'} \frac{1}{d}
 \le \frac{\eps}{3} + \frac{\eps}{3} + \frac{1}{M} \le \eps,
\]
as required.
\end{proof}

The proof of Theorem~\ref{thm:ThmAlmostSharp} is similar to that of Theorem~\ref{thm:CEtoFFKPYQuestion},
but the details are somewhat more complicated.

\begin{proof}[Proof of Theorem~\ref{thm:ThmAlmostSharp}]
We will again choose a large collection of disjoint sets of primes, but this time we will arrange
them in a tree-like structure, and our common differences will be formed by taking products of
certain subsets of the primes along paths in the tree. To begin, let us choose $t > 2$ sufficiently
large so that $t^\lambda < e^{t-3}$, and let $P_1$ be a set of primes such that $p \ge M$ for every
$p \in P_1$, and
\[
 t - 1 \le \prod_{p\in P_1} \bigg( 1 + \frac{1}{p} \bigg) \le t.
\]
This is possible as the product $\prod_p (1+1/p)$ over all primes is infinite, and by taking only
primes greater than $t$ we can ensure that some finite product lands in $[t-1,t]$. Set
\[
 Q_1 := \prod_{p \in P_1} p \qquad \text{and} \qquad
 D_1 := \big\{ d > 1 : d \mid Q_1 \big\},
\]
and choose a collection of arithmetic progressions $\cA_1 = \{ a_d + d\Z : d \in D_1 \}$
so as to minimize the density of the uncovered set $R_1 := \Z_{Q_1} \setminus\bigcup_{A \in \cA_1}A$.
As in the previous proof, this can be done (greedily) so that
\[
 \Pr_0(R_1) \le \prod_{d \in D_1} \bigg( 1 - \frac{1}{d} \bigg)
 \le \exp\bigg( - \sum_{d \in D_1} \frac{1}{d} \bigg)
 \le \exp\bigg( 1 - \prod_{p \in P_1} \bigg( 1 + \frac{1}{p} \bigg) \bigg)
 \le e^{2-t}.
\]
Now, for each $x_1 \in R_1$, let $P_2^{x_1}$ be a set of new primes (i.e., disjoint for each choice of~$x_1$,
and disjoint from~$P_1$) such that $p \ge M$ for every $p \in P_2^{x_1}$, and
\[
 t - 1 \le \prod_{p \in P_2^{x_1}} \bigg( 1 + \frac{1}{p} \bigg) \le t.
\]
Set
\[
 Q_2^{x_1} := \prod_{p \in P_2^{x_1}} p \qquad \text{and} \qquad
 D_2^{x_1} := \big\{ d \cdot Q_1 : d > 1 \text{ and } d \mid Q_2^{x_1} \big\},
\]
and choose a collection of arithmetic progressions $\cA_2^{x_1} = \{ a_d + d\Z : d \in D_2^{x_1}\}$
so as to minimize the density of the uncovered set
$$R_2^{x_1} := \Big\{ (x_1,y) : y \in \Z_{Q_2^{x_1}} \Big\} \setminus\bigcup_{A \in \cA_2^{x_1}} A,$$
where (as usual) each $A \in \cA_2^{x_1}$ is viewed as a subset of $\Z_{Q_1} \times \Z_{Q_2^{x_1}}$. Note that
\[
 \Pr_0\big( R_2^{x_1} \big) \le \frac{1}{Q_1} \prod_{dQ_1 \in D_2^{x_1}} \bigg( 1 - \frac{1}{d} \bigg)
 \le \frac{1}{Q_1} \cdot \exp\bigg( 1 - \prod_{p \in P_2^{x_1}} \bigg( 1 + \frac{1}{p} \bigg) \bigg)
 \le \frac{e^{2-t}}{Q_1},
\]
and hence, setting $R_2 := \bigcup_{x_1 \in R_1} R_2^{x_1}$ and summing over $x_1 \in R_1$, we have
\[
 \Pr_0(R_2) \le |R_1| \cdot \frac{e^{2-t}}{Q_1} = \Pr_0(R_1) \cdot e^{2-t} \le e^{2(2-t)}.
\]

To describe a general step of this construction, suppose that we have already defined the tree of primes
and progressions to depth $i-1$, and for each $x_1 \in R_1$, $x_2\in R_2^{x_2}$, \dots,
$x_{i-1} \in R_{i-1}^{x_1,\dots,x_{i-2}}$, choose a set $P_i^{x_1,\dots,x_{i-1}}$ of new primes
(disjoint from all previously chosen sets) such that $p \ge M$ for every $p \in P_i^{x_1,\dots,x_{i-1}}$, and
\[
 t - 1 \le \prod_{p \in P_i^{x_1,\dots,x_{i-1}}} \bigg( 1 + \frac{1}{p} \bigg) \le t.
\]
Set
\[
 Q_i^{x_1,\dots,x_{i-1}} := \prod_{p \in P_i^{x_1,\dots,x_{i-1}}} p
\]
and
\[
 D_i^{x_1,\dots,x_{i-1}} := \big\{ d \cdot Q_1 \cdot Q_2^{x_1} \cdots
 Q_{i-1}^{x_1,\dots,x_{i-2}} : d > 1 \text{ and } d \mid Q_i^{x_1,\dots,x_{i-1}} \big\},
\]
and choose a collection of arithmetic progressions
$\cA_i^{x_1,\dots,x_{i-1}} = \{ a_d + d\Z : d \in D_i^{x_1,\dots,x_{i-1}} \}$ so as to minimize
the density of the uncovered set
$$R_i^{x_1,\dots,x_{i-1}} := \Big\{ (x_{i-1},y) : y \in \Z_{Q_i^{x_1,\dots,x_{i-1}}} \Big\} \setminus\bigcup_{A \in \cA_i^{x_1,\dots,x_{i-1}}} A,$$
where, as before, each $A \in \cA_i^{x_1,\dots,x_{i-1}}$ is viewed as a subset of $\Z_{Q_1} \times \cdots \times \Z_{Q_i^{x_1,\dots,x_{i-1}}}$. Setting $R_i := \bigcup_{x_1 \in R_1}\cdots \bigcup_{x_{i-1} \in R_{i-1}^{x_1,\dots,x_{i-2}}} R_i^{x_1,\dots,x_{i-1}}$,
and repeating the calculation above, we obtain
\[
 \Pr_0\big( R_i^{x_1,\dots,x_{i-1}} \big)
 \le \frac{1}{Q_1 \cdots Q_{i-1}^{x_1,\dots,x_{i-2}}}
  \prod_{dQ_1 \cdots Q_{i-1}^{x_1,\dots,x_{i-2}} \in D_i^{x_1,\dots,x_{i-1}}}\bigg( 1 - \frac{1}{d} \bigg)
 \le \frac{e^{2-t}}{Q_1 \cdots Q_{i-1}^{x_1,\dots,x_{i-2}}},
\]
and therefore
\[
 \Pr_0(R_i) \le \Pr_0(R_{i-1}) \cdot e^{2-t} \le e^{(2-t)i},
\]
by induction. Hence, defining $\cA_i$ to be the union of all $\cA_i^{x_1,\dots,x_{i-1}}$
and $\cA = \cA_1 \cup \cdots \cup \cA_n$, it follows that the uncovered set $R$ has density
\[
 \Pr_0(R) = \Pr_0(R_n) \le e^{(2-t)n} \to 0
\]
as $n \to \infty$.

It remains to show that
\[
 \sum_{d \in D} \frac{\mu(d)}{d} \le C
\]
for every $n \in \N$, where $D$ is the set of moduli of progressions in~$\cA$.
To prove this, observe first that
\[
 \mu(Q_1) = \prod_{p \in P_1} \bigg( 1 + \frac{\lambda}{p} \bigg)
 \le \prod_{p \in P_1} \bigg( 1 + \frac{1}{p} \bigg)^\lambda
 \le t^\lambda,
\]
and, assuming (as we may) that $M \ge \lambda$,
\[
 \sum_{d \in D_1} \frac{\mu(d)}{d}
 = \prod_{p \in P_1} \bigg( 1 + \frac{\mu(p)}p \bigg) - 1
 \le \prod_{p \in P_1} \bigg( 1 + \frac{1}{p} + \frac{\lambda}{p^2} \bigg)
 \le \prod_{p \in P_1} \bigg( 1 + \frac{1}{p} \bigg)^2 \le t^2.
\]
Similarly, we have
\[
 \sum_{d \in D_2^{x_1}} \frac{\mu(d)}{d}
 = \frac{\mu(Q_1)}{Q_1} \sum_{dQ_1 \in D_2^{x_1}} \frac{\mu(d)}{d}
 \le \frac{t^{\lambda + 2}}{Q_1}
\]
for each $x_1 \in R_1$, and, more generally,
\[
 \sum_{d \in D_i^{x_1,\dots,x_{i-1}}} \frac{\mu(d)}{d}
 \le \frac{\mu\big( Q_1 \cdots Q_{i-1}^{x_1,\dots,x_{i-2}} \big)}{Q_1 \cdots Q_{i-1}^{x_1,\dots,x_{i-2}}}
 \sum_{d \mid Q_i^{x_1,\dots,x_{i-1}}} \frac{\mu(d)}{d}
 \le \frac{t^{\lambda(i-1) + 2}}{Q_1 \cdots Q_{i-1}^{x_1,\dots,x_{i-2}}}.
\]
Hence, summing over $i \in [n]$ and sequences $x_1 \in R_1, \dots, x_{i-1} \in R_{i-1}^{x_1,\dots,x_{i-2}}$, we obtain
\[
 \sum_{d \in D} \frac{\mu(d)}{d}
 \le \sum_{i = 0}^{n-1} t^{\lambda i + 2} \cdot \Pr_0(R_i)
 \le \sum_{i = 0}^{n-1} t^{\lambda i + 2} \cdot e^{(2-t) i}
 < 2t^2,
\]
since $t^\lambda < e^{t - 3}$, as required.
\end{proof}

\section*{Acknowledgements}

This research was largely carried out during a one-month visit by the authors to IMT Lucca, and partly during visits by various subsets of the authors to IMPA and to the University of Memphis. We are grateful to each of these institutions for their hospitality, and for providing a wonderful working environment.

\end{document}